\newenvironment{proof}{\noindent {\bf Proof:}}{$\Box$ \vspace{2 ex}}
\newcommand{\Cl}{\textnormal{Cl}}
\newcommand{\mfq}{\mathfrak{q}}
\newcommand{\mfp}{\mathfrak{p}}
\newcommand{\mff}{\mathfrak{f}}
\newcommand{\mfa}{\mathfrak{a}}
\newcommand{\mfb}{\mathfrak{b}}
\newcommand{\calP}{\mathcal{P}}
\newcommand{\C}{\mathbb{C}}
\newcommand{\prim}{{\rm{prim}}}
\newcommand{\all}{{\rm{all}}}
\newcommand{\Z}{\mathbb{Z}}
\newcommand{\Q}{\mathbb{Q}}
\newcommand{\R}{\mathbb{R}}
\newcommand{\F}{\mathbb{F}}
\newcommand{\cO}{\mathcal{O}}
\newcommand{\FF}{\mathcal{F}}
\newcommand{\GG}{\mathcal{G}}
\newcommand{\beq}{\begin{equation}}
\newcommand{\eeq}{\end{equation}}
\newcommand{\calO}{\mathcal{O}}
\newcommand\Aut{\operatorname{Aut}}
\newcommand\Disc{\operatorname{Disc}}
\newcommand\Gal{\operatorname{Gal}}
\newcommand\Res{\operatorname{Res}}
\newcommand\GL{\operatorname{GL}}
\newcommand\iD{\mathrm{D}}
\newcommand\iF{\mathrm{F}}
\renewcommand\b\bullet
\renewcommand\c\times
\newcommand{\LL}{{\mathcal L}}
\newcommand{\CC}{{\mathcal C}}
\newcommand{\om}{\omega}
\definecolor{dgreen}{RGB}{0, 170, 0}
\newtheorem{proposition}{Proposition}
\newtheorem{theorem}[proposition]{Theorem}
\newtheorem{corollary}[proposition]{Corollary}
\newtheorem{lemma}[proposition]{Lemma}
\newtheorem{remark}[proposition]{Remark}
\title{On the asymptotics of cubic fields ordered by general invariants}
\author{Arul Shankar and Frank Thorne}
\begin{document}

\maketitle
\begin{abstract}
In this article, we introduce a class of invariants of cubic fields termed ``generalized discriminants''. We then obtain asymptotics for the families of cubic fields ordered by these invariants. In addition, we determine which of these families satisfy the Malle--Bhargava heuristic.
\end{abstract}

\section{Introduction}

A foundational result due to Davenport--Heilbronn \cite{MR491593} provides asymptotics for
the number of real and cubic fields, when these fields are ordered by
their discriminants. Specifically, the theorem states:
\begin{theorem}[Davenport--Heilbronn]\label{thDH}
Let $N^{\pm}_{\Disc}(X)$ denote the number of cubic fields $K$, up to
isomorphism, that satisfy $|\Disc(K)|<X$ and $\pm\Disc(K)>0$. Then
\begin{equation*}
\begin{array}{rcccl}
N^{+}_{\Disc}(X)&=&\displaystyle\frac{1}{12\zeta(3)}X&+&o(X);\\[.2in]
N^{-}_{\Disc}(X)&=&\displaystyle\frac{1}{4\zeta(3)}X&+&o(X).
\end{array}
\end{equation*}
\end{theorem}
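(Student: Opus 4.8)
The plan is to prove Theorem~\ref{thDH} by the geometry-of-numbers method of Davenport and Heilbronn, in the form streamlined by Bhargava's averaging technique. The starting point is the Delone--Faddeev correspondence: isomorphism classes of cubic rings are in discriminant-preserving bijection with $\GL_2(\Z)$-orbits on the lattice $V_\Z$ of integral binary cubic forms $f(x,y)=ax^3+bx^2y+cxy^2+dy^3$, where $\GL_2(\Z)$ acts by integral linear substitution twisted by the inverse determinant. Under this bijection, $R$ is an order in a cubic field exactly when $f$ is \emph{irreducible} --- meaning $\Disc(f)\neq 0$ and $f$ has no rational linear factor, so that $R\otimes\Q$ is a field --- and $R$ equals the maximal order $\calO_K$ exactly when $f$ is moreover \emph{maximal at every prime}; moreover $\Disc(f)>0$ (resp.\ $\Disc(f)<0$) corresponds to $K$ totally real (resp.\ complex). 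The stabilizer $\Stab_{\GL_2(\Z)}(f)$ is trivial outside the cyclic cubic fields, of which there are only $O(\sqrt{X})$ with $|\Disc|<X$, so up to a negligible error $N^{\pm}_{\Disc}(X)$ equals the number of $\GL_2(\Z)$-orbits of irreducible, everywhere-maximal forms $f$ with $0<\pm\Disc(f)<X$.

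First I would count orbits of irreducible forms of bounded discriminant, ignoring maximality for the moment. Fix a base point $v_0^{\pm}\in V_\R$ in each of the two $\GL_2(\R)$-orbits on $\{\Disc\neq 0\}$ (three real roots, or one real root and a conjugate pair) and a Siegel fundamental domain $\FF$ for $\GL_2(\Z)\backslash\GL_2(\R)$; then such orbits correspond, with multiplicity $|\Stab_{\GL_2(\R)}(v_0^{\pm})|$ (which is $6$ when $\Disc>0$ and $2$ when $\Disc<0$), to irreducible lattice points of $V_\Z$ inside the region $R^{\pm}(X):=\{g\cdot v_0^{\pm}\,:\,g\in\FF,\ 0<\pm\Disc<X\}$. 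Davenport's lattice-point lemma estimates the number of points of $V_\Z$ in a bounded region of bounded geometric complexity by its volume, plus a lower-order error; the difficulty is that $R^{\pm}(X)$ is \emph{unbounded}, since $\FF$ has a cusp. The heart of the argument --- and the main obstacle --- is to split $\FF$ into a bounded main body and a cuspidal tail, and to show, by averaging the count over a compact set of $\GL_2(\R)$-translates (Bhargava) or by Davenport's direct estimates, that the main body contributes $\Vol(R^{\pm}(X))+o(X)$ lattice points, whereas the cuspidal tail --- although it contains $\asymp X$ lattice points --- contains only $o(X)$ \emph{irreducible} ones, because forms far into the cusp have a small leading coefficient and are overwhelmingly reducible (have a rational root). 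Separating the negligibly many irreducible points in the cusp from the abundant reducible ones is where essentially all the work lies.

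Next, a volume computation evaluates $\Vol(R^{\pm}(X))$: factoring out the $\GL_2(\R)$-action, the answer is a product of $\Vol(\SL_2(\Z)\backslash\SL_2(\R))$ --- which contributes a factor $\zeta(2)=\pi^2/6$ --- with an archimedean integral over the discriminant-bounded slice of each orbit, and these slice integrals agree in the two sign cases. Dividing by the stabilizer orders $6$ and $2$, one finds that the number of \emph{all} cubic orders with $0<\Disc<X$ is $\frac{\pi^2}{72}X+o(X)$, and the number with $-X<\Disc<0$ is $\frac{\pi^2}{24}X+o(X)$, the ratio $1:3$ coming entirely from the stabilizer orders.

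Finally I would sieve down to maximal orders. Since $R=\calO_K$ if and only if $R$ is maximal at every prime, write $\charfn_{\mathrm{max}}=\prod_p(1-\charfn_{W_p})$ with $W_p\subset V_\Z$ the locus of forms non-maximal at $p$, expand by inclusion--exclusion over squarefree $q$, count each resulting family (forms non-maximal at every $p\mid q$) by the previous steps applied with congruence conditions modulo $q^2$, and invoke the uniform tail estimate of Davenport--Heilbronn --- the second delicate point of the proof --- to truncate $q$ and pass to the limit. A $p$-adic volume computation shows that the density of $p$-maximal forms equals $(1-p^{-2})(1-p^{-3})$, so the sieve multiplies the order counts above by $\prod_p(1-p^{-2})(1-p^{-3})=\zeta(2)^{-1}\zeta(3)^{-1}$; this cancels the $\zeta(2)$ and yields $N^{+}_{\Disc}(X)=\frac{1}{12\zeta(3)}X+o(X)$ and $N^{-}_{\Disc}(X)=\frac{1}{4\zeta(3)}X+o(X)$. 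To summarize, the main obstacle is the cusp analysis of the counting step, with the uniformity in $q$ needed for the sieve a second, technically similar, hurdle.
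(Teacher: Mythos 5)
The paper does not give a proof of this theorem; it is quoted as a foundational result with a citation to Davenport and Heilbronn's original article, so there is no ``paper's own proof'' to compare against. Your sketch is a correct high-level account of the standard argument (Davenport--Heilbronn, as reorganized via Bhargava's averaging): the Delone--Faddeev bijection between cubic rings and $\GL_2(\Z)$-classes of integral binary cubic forms, with irreducibility corresponding to orders in cubic fields and maximality to $\calO_K$; the geometry-of-numbers count of irreducible classes of bounded discriminant, whose essential difficulty is the cuspidal region of the fundamental domain where reducible forms dominate; Davenport's volume evaluation giving $\frac{\pi^2}{72}X$ and $\frac{\pi^2}{24}X$ classes in the two sign ranges (the ratio $1{:}3$ coming from the real stabilizers of orders $6$ and $2$); and the sieve to maximal forms using the local densities $(1-p^{-2})(1-p^{-3})$ and the Davenport--Heilbronn uniformity estimate that permits truncating the inclusion--exclusion. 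The final arithmetic $\frac{\pi^2}{72}\,\zeta(2)^{-1}\zeta(3)^{-1}=\frac{1}{12\zeta(3)}$ and $\frac{\pi^2}{24}\,\zeta(2)^{-1}\zeta(3)^{-1}=\frac{1}{4\zeta(3)}$ is correct, and you have correctly isolated the two genuinely delicate steps (the cusp and the sieve uniformity). This is the approach the cited source takes, so your proposal matches the intended proof.
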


The above theorem, its extensions, and the methods of their proofs, have had a host of applications. Among many other applications, they are used by Yang \cite{MR2713725} to verify the Katz--Sarnak heuristics \cite{MR1659828} for low-lying zeroes of Dedekind zeta
functions of cubic fields; by
Bhargava--Wood \cite{MR2373587}, Belabas--Fouvry \cite{MR2740719} and Wang \cite{MR4219215} to prove Malle's conjecture for various different Galois
groups; by Martin--Pollack \cite{MR3022705} and Cho--Kim \cite{MR4211852} to obtain
the average value of the smallest prime satisfying certain prescribed
splitting conditions; by AS--S\"odergren--Templier \cite{AST} to prove that the Dedekind
zeta functions of infinitely many $S_3$-cubic fields have negative central values.

Theorem \ref{thDH} has also been generalized in numerous ways: Belabas--Bhargava--Pomerance \cite{MR2641942} prove power saving error terms; Bhargava \cite{MR2183288,MR2745272}
determines the asymptotics of quartic and quintic fields, when
ordered by discriminant; Datskovsky--Wright \cite{DW3}, Taniguchi \cite{TTNFcubic}, and Bhargava--AS--Wang \cite{global1} count cubic extensions of number fields and function fields; Belabas--Fouvry \cite{MR2740719} count subfamilies of cubic fields satisfying congruence conditions on their discriminants; Terr \cite{TerrThesis} proves that the ``shapes'' of cubic rings and fields are equidistributed (see also work of Bhargava--Harron \cite{MR3518306}, who give a uniform proof that shapes of cubic, quartic, and quintic rings and fields are equidistributed); Taniguchi--FT \cite{MR3127806} and Bhargava--AS--Tsimerman \cite{MR3090184} compute secondary terms (of size $\asymp X^{5/6}$) for the asymptotics of $N^{\pm}_{\Disc}(X)$. 

In this paper, we consider generalizations along a different direction: namely, we determine asymptotics for families of
cubic fields ordered by invariants more general than the
discriminant. Let $C(K)$ be the radical of $|\Disc(K)|$. That is, we
have $C(K):=\prod_{p\mid\Disc(K)}p$. We then prove the following
result. 
\begin{theorem}\label{thradical}
Let $N^\pm_C(X)$ denote the number of cubic fields $K$, up to
isomorphism, that satisfy $C(K)<X$ and $\pm\Disc(K)>0$. Then
\begin{equation*}
\begin{array}{rcl}
  N_C^+(X)&=&
  \displaystyle\frac{33}{120}\prod_{p}\Bigl(1+\frac{2}{p}\Bigr)
  \Bigl(1-\frac{1}{p}\Bigr)^2X\log X
  +o(X\log X);\\[.2in]
  N_C^-(X)&=&\displaystyle\left( \frac{3}{10} + \frac{33}{40} \right) \prod_{p}\Bigl(1+\frac{2}{p}\Bigr)
  \Bigl(1-\frac{1}{p}\Bigr)^2X\log X
  +o(X\log X).
\end{array}
\end{equation*}
\end{theorem}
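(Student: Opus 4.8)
The plan is to combine the Davenport--Heilbronn parametrization of cubic fields with a decomposition of the family according to the ``totally ramified part'' of the discriminant, and to read off the factor $\log X$ from the resulting Dirichlet series. Recall that for a cubic field $K$ and a prime $p\notin\{2,3\}$ one has $v_p(\Disc K)\in\{0,1,2\}$ according as $p$ is unramified, partially ramified ($p\calO_K=\mfp_1^2\mfp_2$), or totally ramified ($p\calO_K=\mfp^3$); at $p\in\{2,3\}$ the ramification can be wild, but this affects only the Euler factors there. Write $|\Disc K|=q_1(K)\,q_2(K)^2\,q_\ast(K)$, where $q_2(K)=\prod_{p\nmid 6,\ v_p=2}p$ is the (squarefree) totally ramified part away from $2,3$, $q_1(K)=\prod_{p\nmid 6,\ v_p=1}p$ is the partially ramified part away from $2,3$, and $q_\ast(K)$ is the $6$-part of $|\Disc K|$ (bounded). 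Then $C(K)=q_1(K)\,q_2(K)\,r(K)$, with $r(K)\mid 6$ recording which of $2,3$ ramify. I would decompose
\[
  N_C^\pm(X)=\sum_{\substack{m\ \mathrm{squarefree}\\(m,6)=1}}\ \#\bigl\{K:\ q_2(K)=m,\ \pm\Disc K>0,\ C(K)<X\bigr\},
\]
and split each inner count according to the finitely many possibilities for the $2$- and $3$-adic behavior of $K$. For $K$ with $q_2(K)=m$ and a fixed such behavior, $|\Disc K|=m^2 q_1(K)\cdot(\text{bounded})$, so $C(K)<X$ is equivalent to $q_1(K)<X/(m\,r(K))$, hence to $|\Disc K|<\kappa\,mX$ for an explicit $\kappa$ depending only on the $2,3$-behavior. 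Thus each inner count becomes a Davenport--Heilbronn count of cubic fields with $|\Disc K|$ below an explicit multiple of $mX$, totally ramified at every prime of $m$, \emph{not} totally ramified at any $p\nmid 6m$, and with prescribed behavior at $2,3$ --- a count with local conditions of convergent total density.

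Next I would invoke the Davenport--Heilbronn asymptotics with local conditions (Davenport--Heilbronn, Belabas, Bhargava, Taniguchi--Thorne): imposing total ramification at the primes of $m$ multiplies the main term by $\prod_{p\mid m}d_p$, where $d_p$ is the density of total ramification at $p$ and $d_p\sim p^{-2}$ (with $p^2d_p\to1$, computable from the local mass formula); the complementary sieve over $p\nmid 6m$ contributes a convergent product, and the archimedean and $2,3$-adic conditions contribute constants $c_j^\pm$. Since $|\Disc K|<\kappa\,mX$ while $m^2\prod_{p\mid m}d_p$ is bounded, the powers of $m$ cancel and one obtains, for each $m$, that $\#\{K:q_2(K)=m,\ \pm\Disc K>0,\ C(K)<X\}$ equals $c^\pm\bigl(\prod_{p\mid m}\gamma_p\bigr)X+E^\pm_m(X)$, where $\gamma_p:=p\,d_p/(1-d_p)\sim p^{-1}$. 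Summing the main terms over $m$ and using the elementary fact
\[
  \sum_{\substack{m<X,\ (m,6)=1\\ \mu^2(m)=1}}\ \prod_{p\mid m}\gamma_p\ \sim\ \Bigl(\prod_{p\nmid 6}(1+\gamma_p)(1-p^{-1})\Bigr)\log X
\]
(equivalently, the Dirichlet series $\prod_p(1+\gamma_p p^{-w})$ has a simple pole at $w=0$ because $\gamma_p\sim p^{-1}$) produces $N_C^\pm(X)\asymp X\log X$; carrying along the constants $c_j^\pm$ and $\kappa$ and inserting the explicit values of $d_p$, $\gamma_p$, and the $2$- and $3$-adic Euler factors collapses the leading constant to $\tfrac{11}{12}\prod_p(1+\tfrac2p)(1-\tfrac1p)^2$ in the totally real case and $\tfrac{11}{4}\prod_p(1+\tfrac2p)(1-\tfrac1p)^2$ in the complex case, the factor $3$ between them originating, exactly as in Theorem~\ref{thDH}, purely from the archimedean density.

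The hard part --- and the technical heart of the argument --- is showing that the accumulated error $\sum_{m<X}E^\pm_m(X)$ is $o(X\log X)$. The obstruction is that for $K$ with $q_2(K)=m$ the discriminant ranges up to $\asymp mX$, so term-by-term insertion of a Davenport--Heilbronn error of conductor $\asymp m$, summed over all $m<X$, loses far too much. One must instead use the strong error estimates for Davenport--Heilbronn counts with local conditions --- most naturally through the analytic properties of the relevant Shintani zeta functions twisted by local conditions (Datskovsky--Wright, Taniguchi--Thorne, Bhargava--Taniguchi--Thorne), which realize $\sum_{K:\pm\Disc K>0}C(K)^{-s}=\sum_m m^{-s}\sum_{q_2(K)=m}(q_1(K)\,r(K))^{-s}$ as a meromorphic function with a \emph{double} pole at $s=1$ (the extra pole, beyond that of the basic Shintani zeta function, reflecting that partial and total ramification contribute equally to $C(K)$), to which a Tauberian theorem applies with error $o(X\log X)$ --- together with a separate, crude treatment of the near-diagonal range where $m$ is close to $X$, where one needs only a uniform upper bound for the number of cubic fields with $m^2\mid\Disc K$. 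Pinning down the precise $2$- and $3$-adic Euler factors (which is what turns the generic product into the rational constant $\tfrac{11}{12}$) is an additional but routine local computation.
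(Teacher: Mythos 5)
Your strategy for the range $m = q_2(K) \lesssim X^{1/2}$ is essentially one half of what the paper does: fiber over the totally ramified part $m \approx \iF(K)$, apply the Davenport--Heilbronn counts with local conditions and averaged error terms (Theorem \ref{thm:dh}, from Bhargava--Taniguchi--Thorne), observe that each dyadic scale of $m$ contributes $\asymp X$ to the main term, and sum a convergent local product. This yields the first estimate of Proposition \ref{proproughcount}. Its error term, summed over a dyadic range $m \asymp M$ with $Y_m = X/m$, is $\ll X^{2/3+\epsilon}M^{2/3}$, which is $o(X)$ per scale precisely when $M = o(X^{1/2})$.

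The gap is in the complementary range $m \in (X^{1/2}, X]$, and it is a genuine gap, not a removable technicality. Each dyadic scale of $m$ contributes $\asymp X$ to the main term, and there are $\asymp \log X$ such scales; hence the range $m > X^{1/2}$ supplies a positive proportion (roughly one half) of the asymptotic $c\,X\log X$, and must be evaluated with the correct constant, not merely bounded. Your proposed crude upper bound does not do this: by Lemma \ref{unifDH}, a field $K$ with $\iF(K) = m$ and $C(K) < X$ satisfies $|\Disc K| \ll mX$, so $\#\{K : \iF(K) = m,\ C(K) < X\} = O_\epsilon\bigl((mX)^{1+\epsilon}/m^2\bigr) = O_\epsilon\bigl(X^{1+\epsilon}/m^{1-\epsilon}\bigr)$, and summing over $X^{1/2} < m \leq X$ gives $O_\epsilon(X^{1+\epsilon})$. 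This is not $o(X\log X)$, and, being only an upper bound, cannot produce the constant. Your alternative --- that $\sum_K C(K)^{-s}$ continues meromorphically with a double pole at $s=1$ and enough vertical decay for a contour-shift/Tauberian argument --- is a reasonable heuristic, but it is not an available off-the-shelf fact: what Shintani zeta theory furnishes is $\sum_K |\Disc K|^{-s}$, and establishing the needed analytic properties of the twist $\sum_K \iF(K)^s|\Disc K|^{-s}$ is essentially the theorem being proved.

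What is missing from your proposal, and what the paper supplies, is a \emph{second fibration} in the near-diagonal range: when $m > X^{1/2}$ the resolvent discriminant $d = \iD(K) < X/m < X^{1/2}$ is small, so the paper fibers over $d$ instead and counts cubic fields with fixed quadratic resolvent and bounded $\iF(K)$ via the Kummer-/class-field-theoretic Dirichlet series $\Phi_{\Sigma,d}(s)$ of Cohen--Morra and Cohen--Thorne (Theorems \ref{thm:cm_explicit_lc} and \ref{thm:cmcount}). This gives the second estimate of Proposition \ref{proproughcount}, whose error is controlled for $\iF \gtrsim X^{1/3}$; the two ranges overlap, yielding Theorem \ref{thdyadicmain} and then Theorem \ref{thCond}, from which Theorem \ref{thradical} follows after the routine computation of the $2$- and $3$-adic Euler factors $\CC(2)=3$ and $\CC(3)=11/3$ (which you correctly flag, along with the archimedean factor $3$ separating the real and complex constants).
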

\noindent Note that we break up the main term in the asymptotics for $N^-_C(X)$ into two summands; they correspond to what can be considered two disjoint subfamilies of cubic fields, namely, the family of pure cubic fields and the family of non-pure cubic fields.

\medskip

Theorem \ref{thradical} will be deduced as a special case of a more
general result that counts cubic fields ordered by various different
types of invariants.

\subsection*{Generalized discriminants of cubic fields}


Let $M$ be a Galois sextic field with Galois group $S_3$ over $\Q$. Then $K$ has three cubic $S_3$-subfields, which are conjugate to each other. One would therefore expect to be able to understand the family of sextic $S_3$-fields via the
family of cubic $S_3$-fields. Bhargava--Wood \cite{MR2373587} and Belabas--Fouvry \cite{MR2740719} independently use this philosophy to prove the
following result.
\begin{theorem}[Belabas--Fouvry, Bhargava--Wood]\label{thm:s3}
Let $N^\pm_{\Delta_6}(X)$ denote the number of Galois sextic number
fields $M$ with Galois group $S_3$, such that $|\Disc(M)|<X$ and
$\pm\Disc(M)>0$.

Then, we have
\begin{equation*}
N^\pm_{\Delta_6}(X)=\frac{C^\pm}{12}\prod_p c_p\cdot X^{1/3}+o(X^{1/3}),
\end{equation*}
where $C^+=1,C^-=3$, the product is over all primes, and
\begin{equation*}
c_p=\begin{cases}
(1-p^{-1})(1+p^{-1}+p^{-4/3}) & p\neq3,\\
(1-\frac13)(\frac43+\frac1{3^{5/3}}+\frac2{3^{7/3}})
& p=3.
\end{cases}
\end{equation*}
\end{theorem}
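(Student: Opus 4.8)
The plan is to descend from the sextic family to the cubic family. Every Galois sextic field $M$ with $\Gal(M/\Q)\cong S_3$ is the Galois closure of a non-cyclic cubic field, namely of any one of the three conjugate cubic subfields of $M$; this sets up a bijection between isomorphism classes of sextic $S_3$-fields and of non-cyclic cubic fields $K$. Writing $F=\Q(\sqrt{\Disc K})$ for the quadratic resolvent of $K$, the conductor--discriminant formula, applied to $M/\Q$ and to $K/\Q$, gives $|\Disc M|=|\Disc F|\cdot|\Disc K|^{2}$: the irreducible constituents of $S_3$ are the trivial character, the quadratic character cutting out $F$ (of conductor $|\Disc F|$), and the standard $2$-dimensional representation $\rho$; since $\zeta_K=\zeta\cdot L(s,\rho)$ the Artin conductor of $\rho$ is $|\Disc K|$, and $\zeta_M=\zeta\cdot L(s,\mathrm{sgn})\cdot L(s,\rho)^{2}$. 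A comparison of signatures shows $\Disc M>0\iff\Disc K>0$. Hence, setting $I(K):=|\Disc F|\cdot|\Disc K|^{2}$,
\[
N^{\pm}_{\Delta_6}(X)=\#\{K\ \text{non-cyclic cubic}:\ \pm\Disc K>0,\ I(K)<X\},
\]
and since there are only $O(X^{1/4})$ cyclic cubic fields with $I(K)=|\Disc K|^{2}<X$, it suffices to count all cubic fields $K$ with $\pm\Disc K>0$ and $I(K)<X$. The theorem thus becomes an instance of counting cubic fields ordered by a ``generalized discriminant''.

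The key observation is that $I$ is determined locally: for each prime $p$, $v_p(I(K))=v_p(\Disc F)+2v_p(\Disc K)$ depends only on the \'etale algebra $K\otimes\Q_p$. For $p\ne3$ this is $v_p(I)=0$ if $p$ is unramified in $K$; $v_p(I)=3$ if $p$ is partially ramified (splitting type $\mathfrak p^{2}\mathfrak q$, for which the inertia subgroup of $S_3$ is a transposition, so $p\mid\Disc F$); and $v_p(I)=4$ if $p$ is totally ramified (type $\mathfrak p^{3}$, with inertia inside $A_3$, so $p\nmid\Disc F$). At $p=2$ the partially ramified type is wild and splits into several local types, which must be listed separately but will recombine into the generic Euler factor; at $p=3$ wild ramification lets $v_3(\Disc K)$ run over several values, and one must enumerate the cubic $\Q_3$-algebras together with their quadratic resolvents by hand.

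I would then carry out the count through the Dirichlet series $\Phi^{\pm}(s):=\sum_{K:\,\pm\Disc K>0}I(K)^{-s}$. Using the Delone--Faddeev/Davenport--Heilbronn parametrization of cubic rings by integral binary cubic forms together with the sieve to maximal orders --- in the quantitative form of the Davenport--Heilbronn counting function with prescribed local conditions and with power-saving error terms uniform over those conditions, in the spirit of \cite{MR2641942,MR3127806} --- one identifies $\Phi^{\pm}(s)$ and shows it extends to the closed half-plane $\Re s\ge\tfrac13$ with a single, simple pole at $s=\tfrac13$: the cubic fields of fundamental discriminant already supply $\sum|\Disc K|^{-3s}$, which has this pole, while the remaining fields, governed through the conductor $f(K)$ by Euler factors that converge for $\Re s>\tfrac14$, contribute nothing further in $\Re s\ge\tfrac13$. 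A Tauberian theorem gives $N^{\pm}_{\Delta_6}(X)\sim 3\,\mathrm{Res}_{s=1/3}\Phi^{\pm}(s)\cdot X^{1/3}$, and the residue factors as an archimedean constant times an Euler product. The archimedean constant is exactly the one in Theorem~\ref{thDH}: the fundamental-discriminant cubic fields dominate, so the $1:3$ ratio between the two signatures and the factor $\tfrac1{12}$ persist, giving $C^{+}=1$, $C^{-}=3$. Each $c_p$ is the Davenport--Heilbronn local factor with the weight $p^{-s\,v_p(\Disc K)}$ at $s=1$ replaced by the weight $p^{-s\,v_p(I)}$ at $s=\tfrac13$; the unramified and partially ramified contributions are unchanged ($1$ and $p^{-1}$, as $v_p(I)/3=1$ for partial ramification), while the totally ramified contribution becomes $p^{-4/3}$, yielding $c_p=(1-p^{-1})(1+p^{-1}+p^{-4/3})$ for $p\ne3$ (at $p=2$ the wild local types sum to this same value), and the explicit list at $p=3$ produces $c_3=(1-\tfrac13)\bigl(\tfrac43+3^{-5/3}+2\cdot3^{-7/3}\bigr)$.

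The main obstacle is analytic: pushing $\Phi^{\pm}(s)$ up to (and controlling it on) the line $\Re s=\tfrac13$, and bounding the tail of the maximality sieve, require the Davenport--Heilbronn count of cubic fields with local conditions to hold with an error term uniform as the set of conditions grows --- equivalently, strong enough bounds on the associated Shintani zeta function in vertical strips --- so that the error in $N^{\pm}_{\Delta_6}(X)$ is genuinely $o(X^{1/3})$. This is the same flavour of uniformity that underlies Theorem~\ref{thradical}. A secondary, purely computational obstacle is getting the wildly ramified local factor at $p=3$ exactly right and verifying the coincidence at $p=2$.
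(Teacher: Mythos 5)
Your reduction to a weighted count of cubic fields is exactly the one the paper uses: the bijection between sextic $S_3$-fields and non-cyclic cubics, the formula $\Disc M = \iD(K)\Disc(K)^2 = \iD(K)^3\iF(K)^4$ (so $I = \iD^\alpha\iF^\beta$ with $\alpha=3$, $\beta=4$), the sign analysis, the discarding of the $O(X^{1/4})$ cyclic cubics, and the local computation of the $c_p$ (including the recombination of the wild $p=2$ contributions to $1+p^{-1}+p^{-4/3}$) all match. Where you part ways with the paper is in how the ``cubic count by $I$'' asymptotic is actually extracted. You propose to build the Dirichlet series $\Phi^\pm(s)=\sum_K I(K)^{-s}$, show (via the Shintani/Delone--Faddeev geometry of numbers plus the maximality sieve with uniform power saving) that it continues to $\Re s\ge\tfrac13$ with a single pole, and apply a Tauberian theorem. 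The paper instead derives Theorem~\ref{thm:s3} from Theorem~\ref{thGD}(a), whose proof (Proposition~\ref{propMT1}) is a direct real-variable summation: fiber over $f=\iF(K)$, apply the BTT version of Davenport--Heilbronn (Theorem~\ref{thm:dh}, with its $f$- and $\Sigma$-averaged error bound) for $f\le Y$, bound the tail $f>Y$ by the crude uniformity estimate Lemma~\ref{unifDH}, and optimize $Y$. Both routes hinge on the same essential input --- a Davenport--Heilbronn count with power saving that is uniform in the imposed local conditions and in $f$ --- which you correctly flag as ``the main obstacle''; in the paper that input is exactly \cite{BTT}. The tradeoff is that the paper's direct-summation route gives an explicit power-saving error (they obtain $O(X^{2/7+\epsilon})$ for this theorem, with $\Sigma$-dependence tracked) and extends uniformly over all generalized discriminants including the degenerate $\alpha=\beta$ case, whereas your Tauberian route, as written, would need extra work (a contour-shift with zeta-bound input rather than a soft Tauberian theorem) to produce any error term beyond $o(X^{1/3})$. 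One small caveat on your phrasing: $\sum_K|\Disc K|^{-u}$ does not have known meromorphic continuation in any genuine sense; what the power-saving DH count actually buys you is a representation of $\Phi^\pm(s)$ as (pole at $s=\tfrac13$) plus a function holomorphic past $\tfrac13$, which suffices, but is weaker than continuation of each fixed-$f$ piece to a fixed half-plane.
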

A power saving error term for the above quantity was obtained by work of Taniguchi--FT \cite{MR3208868}. In this work, they also speculate about a possible secondary term, and discuss tensions between theoretical predictions and the data.

Similarly to $C(K)$, we will regard $|\Disc(M)|$ as a ``generalized
discriminant'' of its cubic subfield. More specifically, let $K$ be a non-Galois cubic field, and denote the Galois closure of $K$ by $M$. Then $M$ has a unique quadratic subfield, denoted $L$. We say that $L$ is the {\it quadratic resolvent field} of $K$. Denote the discriminant of the quadratic resolvent $L$ of $K$ by $\iD(K)$. Then
$\iD(K)\mid \Disc(K)$, and moreover, $\Disc(K)/\iD(K)$ is always a
perfect integer square. Denote its positive integer squareroot by
$\iF(K)$. We note that apart from a factor of a bounded power of $3$,
the quantity $\iF(K)$ is simply the product of primes that totally
ramify in $K$, where $p$ is said to totally ramify in $K$ if $p$ splits as $p=\mathfrak{p}^3$. Similarly, up to a bounded power of $2$, the quantity $\iD(K)$ is the product of of primes that ramify, but not totally, in $K$. For a cubic $S_3$-field $K$, let $\Delta_6(K)$ denote the
discriminant of the Galois closure $M$ of $K$.  Then
we have the decompositions
\begin{equation}\label{eq:disc_decomp}
\Disc(K) = \iD(K) \iF(K)^2,\quad \Delta_6(K)=\iD(K)^3\iF(K)^4,\quad
C(K)=|\iD(K)|\iF(K),  
\end{equation}
where the final equality is true up to bounded factors of $2$
and $3$. For positive real numbers $\alpha$ and $\beta$, we say that
the invariant $|\iD|^\alpha\iF^\beta$ is a {\it generalized
  discriminant}.  This notion of generalized discriminant encompasses
all three invariants we have seen so far, namely, $\Disc(K)$,
$\Delta_6(K)$, and $C(K)$.

When $K$ is a cyclic cubic field, the invariant $\Delta_6(K)$ has no special meaning
but an otherwise similar analysis holds with $\iD(K) := 1$. We also define the above
quantities analogously when $K$ is a cubic \'etale extension of $\Q_p$.

Let $\Sigma=(\Sigma_v)_v$ be a {\it collection of cubic
  splitting types}, where for each place $v$ of $\Q$, the set
$\Sigma_v$ is the set of cubic \'etale extensions
of $\Q_v$ with specified inertial and ramification indices.\footnote{This is a less general notion than the one which allows $\Sigma_v$ to be an arbitrary subset of \'etale cubic extensions of $\Q_v$. We restrict ourselves to this less general notion for two reasons. First, this is the more natural notion from the point of view of families of $L$-functions; see the discussion on Sato--Tate equidistribution at the end of the introduction. Second, we did not 
obtain a version of Theorem \ref{thm:cm_explicit_lc} valid in this generality. Although this
seems likely to be possible, it appears liable
to be inelegant while presenting additional complications in the proof.}
The collection $\Sigma$ is said to be a {\it finite
collection} if for all large enough primes $p$, $\Sigma_p$ is the
set of all cubic \'etale extensions of $\Q_p$ (i.e., all inertial and ramification indices are allowed).
Throughout, we write $P_\Sigma$ for the product of those primes
where $\Sigma_p$ is a proper subset of these extensions.

Given a finite
collection of cubic splitting types $\Sigma$, let $\FF(\Sigma)$
denote the set of cubic fields $K$ such that $K\otimes\Q_v\in\Sigma_v$
for all $v$. For a generalized discriminant $I$, we define
\begin{equation*}
N_{I}(\Sigma;X):=\#\{K\in\FF(\Sigma):I(K)<X,\, \iD(K) \neq -3\}.
\end{equation*}
As the pure cubic fields (those with $\iD(K) = -3$) behave differently from those with
other quadratic resolvents, we will treat them separately.

The next result determines asymptotics for the family
$\FF(\Sigma)$, excluding the pure cubic fields, ordered by generalized discriminants.

\begin{theorem}\label{thGD}
Fix positive real numbers $\alpha$ and $\beta$, and let
$I=|\iD|^\alpha\iF^\beta$ be a generalized discriminant. Let $\Sigma$ be
a finite collection of cubic splitting types. Then
\begin{itemize}
\item[{\rm (a)}] When $\alpha<\beta$, we have
\begin{equation*}
N_I(\Sigma;X) = \frac12\Bigl(\sum_{K\in\Sigma_\infty}\frac{1}{|\Aut(K)|}\Bigr)
\prod_p\Bigl(\sum_{K\in\Sigma_p}\frac{|\iD(K)|_p|\iF(K)|_p^{\beta/\alpha}}{|\Aut(K)|}\Bigr)
\Bigl(1-\frac1{p}\Bigr)
\cdot X^{\frac{1}{\alpha}}
+O_{\epsilon,I}\big( (X^{\frac{2}{\alpha+\beta}+\epsilon}+X^{\frac{5}{6\alpha}}) P_\Sigma^{2/3} \big).
\end{equation*}
\item[{\rm (b)}] When $\alpha>\beta$, we have
\begin{equation*}
N_I(\Sigma;X)= 
\Bigl(\sum_{d\,{\rm fund.}\,{\rm disc} \neq -3}\frac{\Res_{s=1}\Phi_{\Sigma,d}(s)}{|d|^{\alpha/\beta}}
\Bigr)\cdot X^{\frac{1}{\beta}}+O_{\epsilon,I}\big((X^{\frac{3}{2\alpha+\beta}+\epsilon}+X^{\frac{2}{3\beta}+\epsilon}) P_\Sigma^{1/3} \big),
\end{equation*}
where $\Phi_{\Sigma,d}(s)$ are Dirichlet series introduced in \S2.
\item[{\rm (c)}] When $\alpha=\beta$, we have
\begin{equation*}
N_I(\Sigma;X)=
\frac1{2\alpha} \Bigl(\sum_{K\in\Sigma_\infty}\frac{1}{|\Aut(K)|}\Bigr)
\prod_{p}\Bigl(\sum_{K\in\Sigma_p}
\frac{|\iD(K)|_p|\iF(K)|_p}{|\Aut(K)|}\Bigr)\Bigl(1-\frac1{p}\Bigr)^2
\cdot X^{\frac{1}{\alpha}}\log X+o_{\Sigma,I}(X^{\frac1{\alpha}}\log X).
\end{equation*}
\end{itemize}
\end{theorem}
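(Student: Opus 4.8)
The plan is to exploit the factorization $\Disc(K)=\iD(K)\iF(K)^2$ of \eqref{eq:disc_decomp} in order to reduce the count to two classical sub-problems, and then to read off each of the three cases by a contour-shift / Tauberian argument. Observe that
\[
N_I(\Sigma;X)=\sum_{f\ge 1}\#\{K\in\FF(\Sigma):\iF(K)=f,\ |\iD(K)|<(X/f^\beta)^{1/\alpha}\}
=\sum_{d}\#\{K\in\FF(\Sigma):\iD(K)=d,\ \iF(K)<(X/|d|^\alpha)^{1/\beta}\},
\]
the first sum over positive integers $f$ and the second over fundamental discriminants $d$. The first decomposition is governed by \textbf{building block (1)}: for fixed $f$ the fields with $\iF(K)=f$ satisfy $\Disc(K)=\iD(K)f^2$, so ordering them by $|\iD(K)|$ is ordering by $|\Disc(K)|$, and ``$\iF(K)=f$'' is a family of local splitting conditions (totally ramified at $p\mid f$, not totally ramified elsewhere); hence Davenport--Heilbronn with congruence conditions and a power-saving (and secondary) term — in the uniform form due to Belabas--Bhargava--Pomerance, Bhargava--Shankar--Tsimerman, and Taniguchi--FT — together with a uniform upper bound for cubic fields whose discriminant is divisible by $f^2$, gives $\#\{K\in\FF(\Sigma):\iF(K)=f,\,|\iD(K)|<Y\}=c_{\Sigma,f}Y+(\text{secondary})+O_\epsilon(\cdots)$ with $f$-dependence explicit in every term. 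The second decomposition is governed by \textbf{building block (2)}: cubic fields with $\iD(K)=d$ are exactly those with quadratic resolvent $\Q(\sqrt d)$, and by class field theory these correspond to index-$3$ subgroups of ray class groups of $\Q(\sqrt d)$ on which complex conjugation acts by $-1$, with $\iF(K)$ recorded by the conductor; this endows $\Phi_{\Sigma,d}(s)=\sum_{K\in\FF(\Sigma),\ \iD(K)=d}\iF(K)^{-s}$ with an Euler product and a meromorphic continuation slightly past $\Re(s)=1$ with a simple pole there, and I would record the residue as a local product together with bounds for $\Res_{s=1}\Phi_{\Sigma,d}(s)$ and for $\Phi_{\Sigma,d}$ in vertical strips that are polynomial in $|d|$ individually and essentially $O_\epsilon(|d|^\epsilon)$ on average over $d$ (the last statement being a reformulation of Davenport--Heilbronn in the $\iD$-aspect).

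With these inputs, case (b) ($\alpha>\beta$) is the most direct: in the second decomposition the summand decays rapidly in $|d|$ since $\alpha/\beta>1$, and applying a Tauberian theorem to $\Phi_{\Sigma,d}$ for each $d$ yields the main term $X^{1/\beta}\sum_d\Res_{s=1}\Phi_{\Sigma,d}(s)|d|^{-\alpha/\beta}$, the series converging by the average bound above, with the error controlled by the vertical-strip bounds and the tail in $d$. Case (a) ($\alpha<\beta$) is dual: in the first decomposition the sum over $f$ converges since $\beta/\alpha>1$; the main terms sum to $X^{1/\alpha}\sum_f c_{\Sigma,f}f^{-\beta/\alpha}$, which rearranges (using $\iF(K)=\prod_{p\mid f}p$ and unwinding local densities) into the stated Euler product, the accumulated secondary terms of building block (1) produce the $X^{5/(6\alpha)}$ contribution, and the accumulated power-saving errors plus the large-$f$ tail give $X^{2/(\alpha+\beta)+\epsilon}$. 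Case (c) ($\alpha=\beta$) is the boundary case: now $I(K)<X\iff\iD(K)\iF(K)<X^{1/\alpha}$, both sums have full length $\asymp X^{1/\alpha}$, and using building block (2) one gets $N_I(\Sigma;X)\sim X^{1/\alpha}\sum_{|d|<X^{1/\alpha}}\Res_{s=1}\Phi_{\Sigma,d}(s)/|d|$; by partial summation this is $\sim(\text{const})\log X$ precisely because $\sum_{|d|<Y}\Res_{s=1}\Phi_{\Sigma,d}(s)\sim(\text{const})Y$, i.e.\ Davenport--Heilbronn again, and matching constants gives the main term with the extra $\log X$. Equivalently, all three cases arise uniformly by applying Perron's formula to the two-variable Dirichlet series $\xi_\Sigma(u,v)=\sum_{K\in\FF(\Sigma)}|\iD(K)|^{-u}\iF(K)^{-v}$ on the slice $(u,v)=(\alpha s,\beta s)$: it has a polar line $u=1$ (active when $\beta s>\alpha$, giving case (a)) and a polar line $v=1$ (active when $\alpha s>\beta$, giving case (b)) which cross transversally at $(1,1)$, producing a double pole — and hence the $\log X$ — exactly when $\alpha=\beta$; the error exponents $\tfrac{2}{\alpha+\beta}$, $\tfrac{3}{2\alpha+\beta}$, $\tfrac{2}{3\beta}$ then come from the edges of the region in which $\xi_\Sigma$ admits a convexity-type growth bound (these edges being lines through $(1,1)$, asymmetric in $u,v$ because $\Disc=\iD\iF^2$), while $X^{5/(6\alpha)}$ is the genuine secondary pole inherited from Shintani's zeta function of cubic fields.

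I expect the main obstacle to be the uniformity required to make the two parametrized sums converge with an acceptable remainder. In case (a) one needs Davenport--Heilbronn with congruence conditions whose power-saving error has dependence on the conductor $f$ (equivalently, a strong uniform bound for cubic fields of discriminant divisible by $f^2$, and for its secondary term) good enough to be summed over all $f<X^{1/\beta}$; in cases (b) and (c) one needs the analytic continuation, the pole location, and above all the vertical-strip growth of $\Phi_{\Sigma,d}(s)$ to be uniform in $d$, and one must handle the range of large $|d|$ (where the per-$d$ Tauberian error alone would be too lossy) by a pooled estimate — which is exactly the role of the two-variable object $\xi_\Sigma$ — together with the Davenport--Heilbronn-type asymptotic $\sum_{|d|<Y}\Res_{s=1}\Phi_{\Sigma,d}(s)\sim cY$ with power-saving remainder. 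Verifying that the available uniform estimates are strong enough, carrying the dependence on $P_\Sigma$ through the error terms, and performing the optimization that produces the stated exponents is the technical heart of the argument; the extraction of the Euler products and the constant in case (c) is then bookkeeping.
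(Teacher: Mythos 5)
Your proposal is correct and follows essentially the paper's route: fibering over $\iF$ with the averaged Davenport--Heilbronn error of \cite{BTT} in case (a), fibering over $\iD$ via the Cohen--Morra series $\Phi_{\Sigma,d}$ with contour shifting in case (b), and for case (c) combining the two together with the observation that $\sum_{|d|<Y}\Res_{s=1}\Phi_{\Sigma,d}(s)\sim cY$ (the paper's Corollary~\ref{corconstant}), where you also correctly identify uniformity in $f$, $|d|$, and $P_\Sigma$ as the technical bottleneck. The one place your sketch is a bit loose is the case-(c) main term: writing $N_I\sim X^{1/\alpha}\sum_{|d|<X^{1/\alpha}}\Res_{s=1}\Phi_{\Sigma,d}(s)/|d|$ from the per-$d$ asymptotic is not legitimate for $|d|$ near $X^{1/\alpha}$, and the paper's actual fix is to first prove a two-parameter rectangle count $N(\Sigma;Y,Z)\sim C\,YZ$ valid whenever $Y\gg Z^{1+\epsilon}$ or $Y\ll Z^{2-\epsilon}$ (these ranges overlap, so the formula holds for all $Y,Z$) and then tile the hyperbolic region $\{|d|f<X^{1/\alpha}\}$ by $O(\epsilon'^{-1}\log X)$ such rectangles — this is a cleaner and more concrete realization of the ``pooled estimate'' you gesture at with the two-variable series $\xi_\Sigma$.
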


For the pure cubic fields, Cohen and Morra
proved \cite[Corollary 7.4]{CM} that, when $\Sigma_v = \Sigma_v^{\all}$ for all $v$,
\begin{equation}\label{eq:CM_result1}
\#\{K\in\FF(\Sigma): \iD(K) = -3, \iF(K) < Z\}
=
C_1 Z (\log(Z) + C_2 - 1) + O(Z^{2/3 + \epsilon}),
\end{equation}
where
\[
C_1 := \frac{7}{30} \prod_p \Bigl(1+\frac{2}{p}\Bigr)
  \Bigl(1-\frac{1}{p}\Bigr)^2,
\]
\[
C_2 := 2\gamma - \frac{16}{35} \log(3) + 6 \sum_{p} \frac{\log(p)}{p^2 + p - 2},
\]
where the sum and product are over all primes $p$. This result also generalizes to arbitrary
$\Sigma$; see \eqref{eq:CM_result2}. Taking $Z = X^{1/\beta} 3^{-\alpha/\beta}$, we see
that adding the pure cubic fields adds a term of order $X^{1/\beta} \log(X)$, along with
a secondary term of order $X^{1/\beta}$,
to each of the results in Theorem \ref{thGD}. For (a) this is subsumed by the error term,
and the result is unchanged; for (b), this new contribution dominates the asymptotics by a factor of $\asymp\log X$, so that asymptotically 100\% of cubic fields ordered by $I$
will be pure cubic fields; for (c) this contribution is of equal magnitude, and the pure
and non-pure cubic fields each constitute a positive proportion of cubic fields ordered 
by $I$.

We recover Theorem \ref{thm:s3}, with a power saving error term of
$O(X^{\frac{2}{7} + \epsilon})$, by taking $\alpha = 3$ and $\beta =
4$ in Theorem \ref{thGD} and carrying out an appropriate calculation at the $2$- and
$3$-adic places.  (This was also noted in \cite{BTT}.) When
$\frac{\beta}{\alpha} > \frac{7}{5}$, the error term of
$O(X^{\frac{5}{6\alpha}})$ in case (a) dominates the other error term
and can be refined into a secondary term extrapolating that proved in
\cite{MR3090184,MR3127806} for $\alpha = 1$ and $\beta = 2$. More precisely, we have the following result.

\begin{theorem}\label{thm:secterm}
Let $\alpha$ and $\beta$ be positive real numbers with $\frac{\beta}{\alpha} > \frac{7}{5}$, and let $I=|\iD|^\alpha\iF^\beta$. Then we have
\begin{equation*}
N_I(\Sigma;X) = C_1(I;\Sigma)\cdot X^{\frac{1}{\alpha}}
+C_{2}(I;\Sigma)\cdot X^{\frac{5}{6\alpha}}
+O_\epsilon\big( (X^{\frac{2}{\alpha+\beta}+\epsilon}+X^{\frac{2}{3\alpha}+\epsilon}) P_\Sigma^{2/3} \big),
\end{equation*}
where $C_1(I;\Sigma)$ is the leading constant appearing in the right hand side of the displayed equation in Part (a) of the above theorem, and 
\begin{equation*}
C_{2}(I;\Sigma)=C(\infty) \frac{ 4 \zeta(1/3)}{5 \Gamma(2/3)^3 \zeta(5/3)}
\prod_p
\left[
\frac{\text{\footnotesize $\displaystyle\sum_{K_p\in\Sigma_p(f)}$} 
\!\frac{|\iD(K_p)|_p |\iF(K_p)|_p^{\frac{5\beta + 2\alpha}{6\alpha}}}{|\Aut(K_p)|} \text{\footnotesize $\displaystyle\int_{\scriptsize\calO_{K_p}\setminus p\calO_{K_p}}$}\text{\footnotesize $\!\!\![\calO_{K_p}\!:\!\Z_p[x]]^{2/3}dx$}}
{\text{\footnotesize $\displaystyle\sum_{K_p\in \Sigma_p^{\all}}$} 
\!\frac{|\iD(K_p)|_p |\iF(K_p)|_p^2}{|\Aut(K_p)|} \text{\footnotesize $\displaystyle\int_{\calO_{K_p}\setminus p\calO_{K_p}}$}\text{\footnotesize $\!\!\![\calO_{K_p}\!:\!\Z_p[x]]^{2/3}dx$}}
\right]
\end{equation*}
where $C(\infty)$ is $1$, $\sqrt{3}$ or $1+\sqrt{3}$
depending on whether $\Sigma_\infty$ consists of $\R^3$, $\R\oplus\C$ or both, respectively. Also, $\cO_K$ denotes the ring of integral elements in $K_p$.
\end{theorem}

\subsection*{The Malle--Bhargava heuristics}

In \cite{MR1884706,MR2068887}, Malle develops heuristics for asymptotics of the number of degree-$n$ number fields with Galois group $G$ and bounded discriminant, where $n>1$ is any integer and $G$ is a finite group with an action on a set with $n$ elements. These heuristics are believed to be true in most cases. However, see \cite{MR2135320} where Kl\"{u}ners demonstrates a counter example in the case $n=3$ and $G=C_3\wr C_2$, and \cite{MR3349443}, where T\"{u}rkelli modifies Malle's conjecture so that it holds in the above and similar cases. While Malle's conjecture has been formulated only for families of fields ordered by discriminant, the same method applies to other orderings, in particular to the generalized discriminants that we work with. 

Interestingly, the leading constants appearing in front of Malle's heuristics are still shrouded with mystery. In the case of degree-$n$ $S_n$ number fields ordered by discriminant, Bhargava \cite{MR2354798} formulates a conjecture for the leading coefficients, using a general recipe which constructs these constants from mass formulae counting \'etale extensions of local fields. Once again, this recipe is quite general, applying to any family of number fields 
constructed as follows: fix a degree $n>1$ and a group $G$ with a transitive action on the set $\{1,\ldots,n\}$. Then this recipe applies to the family of all degree-$n$ number fields with Galois group $G$, satisfying any finite set of splitting conditions, ordered by any generalized discriminant. (See also work of Kedlaya \cite{MR2354797} describing how these leading constants can be computed in the more general case of families of Galois representations.)
However, there are many instances where this prediction gives the incorrect leading constant. The prototypical example is the family of quartic $D_4$-fields ordered by discriminant, where the asymptotic constant determined by Cohen--Diaz y Diaz--Olivier \cite{MR1918290} is not expected to equal the constant that this recipe would predict. On the other hand, when quartic $D_4$-fields are ordered by conductor, Altu\u{g}--AS--Varma--Wilson \cite{D4preprint} establish that the leading asymptotic constant does arise from the Malle--Bhargava recipe. This leads to the natural question, as
discussed by Bhargava in \cite{MR2354798}, of which families of number fields ordered by which invariants satisfy this property.

We say that a family $F$ of number fields, ordered by some generalized discriminant, satisfies the {\it Malle--Bhargava heuristic} if the asymptotics of every subfamily defined by prescribed splitting at finitely many primes are as predicted by the Malle--Bhargava recipe. (Despite our terminology,
we emphasize again that Bhargava conjectured this only for $S_n$, and did not predict that it should always hold.)

A necessary condition is that the splitting behaviour of primes is independent. We now precisely define this notion. Let $\GG$ be a family of number fields having the same degree
$n$.\footnote{It is not entirely clear exactly what constitutes a {\it
    family} of number fields. Being the set of all number fields
  having the same degree and the same Galois group is assumed to be a
  sufficient though not a necessary condition. See \cite{MR3675175}, where a similar question is discussed in detail.} Let $\Sigma=(\Sigma_v)_v$ be a {\it
  collection of degree-$n$ splitting types}, where for each place $v$ of $\Q$,
$\Sigma_v$ is the set of degree-$n$ \'etale extensions of $\Q_v$ satisfying specified inertial and ramification behaviour. 
For
each place $v$, let $\Sigma_v^\all$ denote the set of all degree-$n$
\'etale extensions of $\Q_v$. Then $\Sigma$ is said to be {\it finite}
if $\Sigma_p=\Sigma_p^\all$ for all sufficiently large primes $p$.
Let $h:\GG\to\R_{>0}$ be a height function (i.e., there are only
finitely many elements of $\GG$ having bounded height). Let
$N_{h}(\GG_\Sigma;X)$ denote the number of elements in $\GG$ satisfying $\Sigma$ and
having height less than $X$. Then we say that the family $\GG$ ordered
by $h$ satisfies {\it independence of primes} if the following is
true. For all places $v$ of $\Q$, there exist functions
$\sigma_v:\Sigma_v^\all\to\R_{\geq 0}$ with
\begin{equation*}
\sum_{K_v\in\Sigma_v^\all}\sigma_v(K_v)=1,
\end{equation*}
such that the following condition is satisfied. For each finite
collection of splitting types $\Sigma$, we have
\begin{equation*}
N_{h}(\GG_\Sigma;X)\sim \Bigl(\prod_{v}\sum_{K_v\in\Sigma_v}
\sigma_v(K_v)\Bigr)\cdot N_{h}(\GG;X).
\end{equation*}
There are many known examples of families of number fields which do not satisfy independence of primes. See for example \cite{MR2581243}, in which Wood studies families of number fields with any fixed abelian Galois group, and proves in many cases that, when ordered by discriminant, these families do not satisfy independence of primes. We note that the notion of satisfying independence of primes is a weaker notion than that of satisfying the Malle--Bhargava heuristic, when both these notions make sense. Moreover, independence of primes can be defined for a wider class of families, for example, this notion makes sense for the family of pure cubic fields, the family of monogenic degree-$n$ fields, and many other families for which the Malle--Bhargava heuristics do not apply.

Next, we consider the family of all cubic fields. It is natural to partition this family into two subfamilies: the family of pure cubics and the family of non-pure cubics. The ordering on the family of pure cubic fields coming from any generalized discriminant is the same (since we have $\iD(K)=-3$ for every pure cubic field $K$). It follows from the method of Cohen--Morra \cite{CM} described in \S2.2 that the family of pure cubic fields satisfies independence of primes. 
For the family of non-pure cubic fields ordered by generalized
discriminants, we have the following result.
\begin{theorem}\label{thIndPr}
Let $I=|\iD|^\alpha\iF^\beta$ be a generalized discriminant. Then
the family of all non-pure cubic fields
ordered by $I$ satisfies independence of primes and the Malle--Bhargava heuristic if and only if $\alpha \leq \beta$.
\end{theorem}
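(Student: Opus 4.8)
The plan is to leverage the explicit asymptotics of Theorem \ref{thGD}, comparing the leading terms for general finite collections $\Sigma$ against the Malle--Bhargava prediction. First I would split into the three cases $\alpha<\beta$, $\alpha=\beta$, and $\alpha>\beta$ according to that theorem. In the cases $\alpha<\beta$ (part (a)) and $\alpha=\beta$ (part (c)), the leading coefficient is already presented as an Euler product over primes, with a local factor at $p$ equal to $\bigl(\sum_{K_p\in\Sigma_p} |\iD(K_p)|_p|\iF(K_p)|_p^{\beta/\alpha}/|\Aut(K_p)|\bigr)(1-1/p)$ (resp. with $(1-1/p)^2$ and $\beta/\alpha=1$). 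I would set $\sigma_p(K_p)$ to be this local mass of $K_p$ normalized by the full local sum over $\Sigma_p^{\all}$, and $\sigma_\infty(K)=1/|\Aut(K)|$ normalized by $\sum_{K\in\Sigma_\infty^{\all}}1/|\Aut(K)|$, and check $\sum_{K_v}\sigma_v(K_v)=1$ by construction. Then for any finite $\Sigma$, the ratio $N_I(\Sigma;X)/N_I(\Sigma^{\all};X)$ tends to $\prod_v\sum_{K_v\in\Sigma_v}\sigma_v(K_v)$ since all but finitely many local factors cancel and the archimedean and bad-prime factors match term by term; this establishes independence of primes when $\alpha\le\beta$. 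For the Malle--Bhargava heuristic, I would separately verify that these $\sigma_v$ coincide with the masses produced by the Bhargava recipe — i.e., that the local factor is exactly the mass $\sum_{K_p}|\disc(K_p)|_p^{c}/|\Aut(K_p)|$ (suitably weighted by the ordering exponent) times the appropriate $\zeta_p$-correction — which is a direct comparison of the Euler factors in part (a)/(c) with the recipe's local densities, using the relation $\Disc(K)=\iD(K)\iF(K)^2$ to translate $|\iD|_p|\iF|_p^{\beta/\alpha}$ into the recipe's normalization.

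The substantive direction is the ``only if'': when $\alpha>\beta$ I must show independence of primes \emph{fails}. Here the leading term from part (b) is $\bigl(\sum_{d\text{ fund. disc.}}\Res_{s=1}\Phi_{\Sigma,d}(s)/|d|^{\alpha/\beta}\bigr)X^{1/\beta}$, which is manifestly \emph{not} an Euler product: the outer sum over fundamental discriminants $d$ couples together the local conditions at different primes, because the local condition at $p$ (whether $p\mid d$, and if so how $p$ behaves) is dictated by the global object $d=\iD(K)$. Concretely, I would argue that $\Res_{s=1}\Phi_{\Sigma,d}(s)$ depends on $\Sigma$ and $d$ in a way that does not factor as a product of a $d$-dependent constant times $\prod_p(\text{local factor depending only on }\Sigma_p)$. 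To make the failure of independence rigorous rather than merely ``visibly non-multiplicative'', I would exhibit two finite collections $\Sigma$ and $\Sigma'$ differing only at a single prime $p$ (say, $\Sigma_q=\Sigma_q'$ for $q\ne p$, with $\Sigma_p$ allowing only $\iF$-divisibility at $p$ and $\Sigma_p'$ allowing only $\iD$-divisibility at $p$, or similar) and compute that the ratio $N_I(\Sigma;X)/N_I(\Sigma^{\all};X)$ does not equal the product of any fixed candidate $\sigma_p(\Sigma_p)$ — for instance by choosing a third collection $\Sigma''$ that also differs from $\Sigma^{\all}$ only at $p$ and showing the three resulting ratios are inconsistent with the existence of a single function $\sigma_p$. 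The cleanest such obstruction: impose a splitting condition at $p$ together with a splitting condition at another prime $\ell$ that forces $p\mid\iD$ or $p\nmid\iD$ through a quadratic reciprocity constraint on $d=\iD(K)$; the resulting count will not satisfy the product formula.

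A cleaner high-level argument for the ``only if'' direction, which I would present as the main line, is this: if the family ordered by $I$ satisfied independence of primes, then in particular $N_I(\Sigma;X)/N_I(\Sigma^{\all};X)$ would converge, for every finite $\Sigma$, to a limit that is \emph{multiplicative} in $\Sigma$ (meaning it factors over the finitely many bad primes). By Theorem \ref{thGD}(b) this limit equals $\bigl(\sum_d \Res_{s=1}\Phi_{\Sigma,d}(s)/|d|^{\alpha/\beta}\bigr)\big/\bigl(\sum_d \Res_{s=1}\Phi_{\Sigma^{\all},d}(s)/|d|^{\alpha/\beta}\bigr)$. I would then analyze $\Phi_{\Sigma,d}(s)$ from its definition in \S2 (the Dirichlet series counting, for fixed quadratic resolvent of discriminant $d$, the possible $\iF$-parts of cubic fields weighted appropriately): its residue at $s=1$ is, up to an overall constant, an Euler product whose $p$-th factor depends on $\Sigma_p$ \emph{and on whether $p$ ramifies in $\Q(\sqrt d)$}. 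Summing against $1/|d|^{\alpha/\beta}$ over $d$ therefore produces something of the shape $\sum_d \mu^2(\cdots)\prod_p g_p(\Sigma_p, d)$, and I would show this cannot be written as $(\text{const})\cdot\prod_p f_p(\Sigma_p)$ unless the $d$-sum degenerates — which happens precisely when $\alpha\le\beta$ pushes us out of case (b). The main obstacle, and the place I expect to spend the most care, is exactly this last step: rigorously proving non-multiplicativity of the $d$-weighted residue sum, i.e. producing an explicit pair (or triple) of finite collections witnessing the failure, and verifying via the formulas of \S2 that the Malle--Bhargava recipe's predicted constant (which \emph{is} by construction an Euler product) disagrees with the true constant of Theorem \ref{thGD}(b). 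Everything else — the ``if'' direction and the identification of $\sigma_v$ with the Bhargava masses — is a bookkeeping comparison of Euler factors.
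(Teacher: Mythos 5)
The ``if'' direction in your proposal is correct and matches the paper: one reads off the $\sigma_v$ directly from the Euler-product shape of the leading constants in Theorem~\ref{thGD} parts (a) and (c).

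The ``only if'' direction has a genuine gap. Your first concrete strategy --- comparing two or three finite collections $\Sigma,\Sigma',\Sigma''$ that differ from $\Sigma^{\all}$ only at a single prime $p$ --- cannot work. For any fixed prime $p$ one may simply \emph{define}
\[
\sigma_p(K_p) := \lim_{X\to\infty}\frac{\#\{K\in\FF(\Sigma^{\all}):\,I(K)<X,\,K\otimes\Q_p\cong K_p\}}{\#\{K\in\FF(\Sigma^{\all}):\,I(K)<X\}},
\]
a limit which exists by Theorem~\ref{thGD}; these are nonnegative, sum to $1$, and --- since the counts are additive in $\Sigma_p$ --- the ratio $N_I(\Sigma;X)/N_I(\Sigma^{\all};X)$ for any collection differing from $\Sigma^{\all}$ only at $p$ automatically converges to $\sum_{K_p\in\Sigma_p}\sigma_p(K_p)$. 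So ratios obtained by perturbing one prime at a time are always mutually consistent, and this can never witness a failure of independence; the failure must be detected by conditioning at \emph{several} primes simultaneously and seeing that the product formula fails across primes. Your ``cleaner'' observation that the residue sum $\sum_d \Res_{s=1}\Phi_{\Sigma,d}(s)/|d|^{\alpha/\beta}$ is visibly non-multiplicative is correct in spirit but, as you acknowledge, is not yet a proof: one must still exhibit a concrete incompatibility.

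The paper closes this gap with a different and more robust construction. Fix the prime $5$, and for each $\epsilon>0$ build a collection $\Sigma^{(\epsilon)}$ with $\Sigma^{(\epsilon)}_5=\Sigma_5^{\all}$ (so the $5$-adic condition is never constrained) but with nontrivial conditions at an auxiliary finite set of primes $p_d$, one for each fundamental discriminant $d$ with $|d|\le N(\epsilon)$ and $d\ne -3$, each chosen so that $p_d$ splits differently in $\Q(\sqrt{-3})$ and in $\Q(\sqrt{d})$; imposing at $p_d$ the splitting types compatible only with resolvent $\Q(\sqrt{-3})\otimes\Q_{p_d}$ kills all the ``nearby'' resolvents $d\ne -3$ while leaving $\FF(\Sigma^{(\epsilon)})_{-3}$ untouched. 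Choosing $N(\epsilon)$ so that the tail $\sum_{|d|>N(\epsilon)}\Res_{s=1}\Phi_{\Sigma^{\all},d}/|d|^{\alpha}$ is $<\epsilon$ times the $d=-3$ term forces the proportion of $K\in\FF(\Sigma^{(\epsilon)})$ with $K\otimes\Q_5\cong\Q_5^3$ to lie in $(0,\epsilon)$, because $5$ cannot split completely in any cubic field with resolvent $\Q(\sqrt{-3})$. Since this proportion depends on $\epsilon$ while $\Sigma^{(\epsilon)}_5$ is held constant, no choice of $\sigma_5$ can satisfy the independence identity for all $\Sigma^{(\epsilon)}$, and independence (hence the Malle--Bhargava prediction) fails. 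This one-parameter family argument sidesteps the need to prove non-multiplicativity of the residue sum directly, which is the part your proposal flagged as the main obstacle.
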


For the $\alpha\leq\beta$ case, the above result is an immediate consequence of Theorem \ref{thGD}. This $\alpha>\beta$ case requires a bit more
work, since the residues of the Dirichlet series appearing in Part (b) of Theorem \ref{thGD} are not explicit. We give a general proof which also applies to many different situations, such as the family of quartic $D_4$-fields ordered by discriminant. 


\medskip

Finally, our counting results also have implications towards families of Artin $L$-functions associated to cubic $S_3$-fields. Indeed, let $\rho:S_3\to\GL_n(\C)$ be any representation of $S_3$. Given a cubic $S_3$-field $K$, with normal closure $M$, we obtain a Galois representation
\begin{equation*}
\Gal(\overline{\Q}/\Q)\to\Gal(M/\Q)\cong S_3\to\GL_n(\C),
\end{equation*}
where the final map is $\rho$. We associate to this Galois representation its Artin $L$-function, denoted $L(s;\rho,K)$. Throughout, we assume that $\rho$ contains at least one copy of the standard representation of $S_3$, which is necessary to ensure that different cubic fields give rise to different $L$-functions. Then, given a family $\FF(\Sigma)$ of cubic $S_3$-fields $K$, we obtain a family of Artin $L$-functions $L(s;\rho,K)$ that we denote by $\LL(\rho,\Sigma)$. We order the $L$-functions in $\LL(\rho,\Sigma)$ by their conductors.

Ordering $\LL(\rho,\Sigma)$ by conductor corresponds to ordering $\FF(\Sigma)$ by a certain generalized discriminant $I=|\iD|^\alpha\iF^\beta$ depending on $\rho$. Indeed, we have 
$(\alpha, \beta) = c_1(1, 2) + c_2(1, 0)$ where $c_1 \geq 1$ and $c_2 \geq 0$ are the multiplicities of
the standard and sign representations respectively, so that $\alpha > 0$ and $\beta > 0$.
A consequence of Theorem \ref{thGD} is that the family $\LL(\rho,\Sigma)$ satisfies Sato--Tate equidistribution in the sence of \cite[Conjecture 1]{MR3675175}. Loosely speaking, a family of $L$-functions arising from number fields satisfies Sato--Tate equidistribution when the asymptotics of these number fields, ordered by the conductors of their $L$-functions, satisfy the Malle-Bhargava heuristics on average over primes $p$. Identically to the arguments in \cite[\S3.1]{MR3993807}, when $\alpha\leq\beta$, this follows immediately from the shape of the leading constant in Parts (a) and (c) of Theorem \ref{thGD}. When $\alpha>\beta$ the situation is similar to the case of the family of Dedekind zeta functions of $D_4$-fields considered in \cite[\S6.2]{MR3993807}. As there, we consider the family of cubic fields ordered by $I$ to be a countable union of subfamilies, one for each fixed quadratic resolvent field. Since each of these subfamilies contributes a positive proportion to the full family, Sato--Tate equidistribution for the full family follows from Sato--Tate equidistribution for each subfamily. Thus, we have the following consequence to Theorem \ref{thGD}:
\begin{corollary}
With notation as above, the families $\LL(\rho,\Sigma)$ satisfy Sato--Tate equidistribution.
\end{corollary}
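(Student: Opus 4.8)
The plan is to pass through the dictionary between $\LL(\rho,\Sigma)$ and $\FF(\Sigma)$, and then use Theorem~\ref{thGD} to verify the Malle--Bhargava heuristic on average over primes, treating the regimes $\alpha\le\beta$ and $\alpha>\beta$ separately. Recall that ordering $\LL(\rho,\Sigma)$ by conductor coincides with ordering $\FF(\Sigma)$ by $I=\iD^\alpha\iF^\beta$ with $(\alpha,\beta)=c_1(1,2)+c_2(1,0)$, so that $\alpha=c_1+c_2$ and $\beta=2c_1$, and $\alpha\le\beta$ precisely when $c_2\le c_1$. In the sense of \cite[Conjecture~1]{MR3675175}, Sato--Tate equidistribution for $\LL(\rho,\Sigma)$ is the statement that the fields of $\FF(\Sigma)$, ordered by $I$, satisfy the Malle--Bhargava heuristic on average over the primes $p$; in particular it is implied by (but is weaker than) independence of primes for $\FF(\Sigma)$ ordered by $I$, together with the identification of the local densities with the Malle--Bhargava ones.

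When $\alpha\le\beta$: by the ``if'' direction of Theorem~\ref{thIndPr}, which is read off from the Euler-product-times-archimedean-factor shape of the leading constant in Theorem~\ref{thGD}(a),(c), the family $\FF(\Sigma)$ ordered by $I$ satisfies independence of primes with the Malle--Bhargava local densities; Sato--Tate equidistribution then follows formally, as in \cite[\S3.1]{MR3993807}.

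When $\alpha>\beta$: by Theorem~\ref{thGD}(b) the leading constant of $N_I(\Sigma;X)$ is $\sum_{d}\Res_{s=1}\Phi_{\Sigma,d}(s)\,|d|^{-\alpha/\beta}$ with the residues not in closed form, and by Theorem~\ref{thIndPr} independence of primes fails, so this direct route is unavailable. Following the $D_4$-field argument of \cite[\S6.2]{MR3993807}, I would decompose $\FF(\Sigma)=\bigsqcup_d\FF_d(\Sigma)$ over fundamental discriminants $d$, where $\FF_d(\Sigma)\subset\FF(\Sigma)$ consists of the cubic fields with quadratic resolvent $\Q(\sqrt d)$. On $\FF_d(\Sigma)$ one has $I(K)=|d|^\alpha\iF(K)^\beta$, so ordering by $I$ is, up to the scalar $|d|^\alpha$, ordering by $\iF$, and $\#\{K\in\FF_d(\Sigma):\iF(K)<Y\}\sim \Res_{s=1}\Phi_{\Sigma,d}(s)\cdot Y$. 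Although the sum over $d$ of these residues is opaque, each individual residue is accessible: counting cubic fields with a fixed quadratic resolvent and prescribed ramification is a Kummer-theory/class-field-theory problem over $\Q(\sqrt d)$, and from the explicit $\Phi_{\Sigma,d}$ of \S2 one checks that each subfamily $\FF_d(\Sigma)$, ordered by $\iF$ (equivalently by $I$), satisfies Sato--Tate equidistribution, with local densities those of the Malle--Bhargava heuristic conditioned on the resolvent being $\Q(\sqrt d)$. Since $\Res_{s=1}\Phi_{\Sigma,d}(s)>0$ for all admissible $d$ while $\sum_d\Res_{s=1}\Phi_{\Sigma,d}(s)|d|^{-\alpha/\beta}$ converges, each $\FF_d(\Sigma)$ is a positive proportion of $\FF(\Sigma)$ in the ordering by $I$; hence Sato--Tate equidistribution for $\LL(\rho,\Sigma)$ follows from that of the subfamilies, exactly as in \cite[\S6.2]{MR3993807}.

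The main obstacle is the case $\alpha>\beta$ and, specifically, verifying Sato--Tate equidistribution for each subfamily $\FF_d(\Sigma)$: this requires a sufficiently explicit handle on $\Res_{s=1}\Phi_{\Sigma,d}(s)$ --- as a finite combination of Euler products with identifiable local factors --- to match its $p$-local densities against the Malle--Bhargava recipe. The key phenomenon is that the intractability of the full leading constant $\sum_d\Res_{s=1}\Phi_{\Sigma,d}(s)|d|^{-\alpha/\beta}$ (which is exactly why the full family fails the Malle--Bhargava heuristic when $\alpha>\beta$) does not propagate to the individual terms, each being governed by class field theory over a fixed quadratic field; extracting that structure and carrying out the matching, as in \cite[\S6.2]{MR3993807}, is the one genuinely computational step, the remainder being the cited formal reductions and the convergence bookkeeping for the sum over $d$.
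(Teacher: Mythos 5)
Your argument matches the paper's: for $\alpha\le\beta$ read Sato--Tate off the Euler-product shape of the leading constant in Theorem~\ref{thGD}(a),(c) as in \cite[\S3.1]{MR3993807}, and for $\alpha>\beta$ decompose by quadratic resolvent $\Q(\sqrt d)$, note each subfamily is a positive proportion since $\Res_{s=1}\Phi_{\Sigma,d}(s)>0$ and the sum over $d$ converges, and invoke the $D_4$-field argument of \cite[\S6.2]{MR3993807}. This is essentially the same proof the paper gives.
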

It is interesting to note that despite independence of primes not always holding, Sato--Tate equidistribution is always satisfied for our families.

\subsection*{Organization of the paper}
We begin in \S2 by considering families of cubic fields with one fixed invariant. Invoking work of Bhargava--Taniguchi--FT \cite{BTT} on the Davenport-Heilbronn theorem, we obtain asymptotics for families of cubic fields with fixed $\iF$; using work of Cohen--Morra \cite{CM} and Cohen--FT \cite{CT} on a Kummer-theoretic approach, we deduce asymptotics for families of cubic fields with fixed $\iD$. The leading constants appearing in the asymptotics for the latter family are somewhat inexplicit, but in \S3 we prove that the average values of these constants have an explicit description given in terms of products of mass formulae.

The results of the previous two subsections allow us to determine asymptotics for $\FF(\Sigma)$ ordered by generalized discriminants. This is accomplished in \S4, and we extract secondary terms and power saving error terms when possible. We then establish exactly when independence of primes holds, thereby proving Theorem \ref{thIndPr}. Finally, we conclude in \S5 by presenting some numerical data.

Throughout, implied constants may
depend on $\epsilon$, $\alpha$, and $\beta$, but not $\Sigma$ unless otherwise noted.

\section{Families of cubic fields with a fixed invariant}

Recall that for each cubic field or \'etale algebra $K/\Q$
or $K/\Q_p$, we have a decomposition
\begin{equation}\label{eq:disc_decomp_repeat}
\Disc(K) = \iD(K)\iF(K)^2,
\end{equation}
where $\iD(K)$ is the discriminant of the {\itshape quadratic
  resolvent} algebra of $K$. When $K$ is a $S_3$-cubic field 
  $\iD(K)$ is the discriminant of the unique quadratic field contained in the
Galois closure of $K$, and when $K$ is a cyclic cubic field $\iD(K) = 1$.
We decompose these quantities into local factors
\begin{equation}
  \Disc(K) = \pm \prod_p \Disc_p(K),\quad \iD(K) = \pm \prod_{p} \iD_p(K),
  \quad \iF(K) = \prod_p \iF_p(K),
\end{equation}
with $\iD_p(K) = p^{v_p(\iD(K))}$ and $\iF_p(K) = p^{v_p(\iF(K))}$.
Then these quantities enjoy the following properties:
\begin{itemize}
\item[{\rm (a)}] When $p > 3$, then 
\[
(\iD_p(K), \iF_p(K)) 
\in \{ (1, 1), (p, 1), (1, p) \},
\]
with the three cases corresponding to 
the ramification type of $p$ in $K$: 
unramified, partially ramified, or totally ramified,
respectively.
\item[{\rm (b)}] When $p = 3$, we have
\[
(\iD_3(K), \iF_3(K)) \in \{ (1, 1), (p, 1), (p, p), (1, p^2), (p, p^2) \}.
\]
Here $p$ is unramified in the first case, partially ramified in the second case,
and totally ramified in the remaining cases.
\item[{\rm (c)}] When $p = 2$, we have
\[
(\iD_2(K), \iF_2(K)) \in \{ (1, 1), (p^2, 1), (p^3, 1), (1, p) \}.
\]
Here $p$ is unramified in the first case, partially ramified in the
next two cases, and totally ramified in the last case.
\end{itemize}

Given a positive number $f$, squarefree away from $3$, and indivisible by $27$, we let $\FF(\Sigma)^{(f)}$ denote the set of cubic $S_3$-fields $K\in\FF(\Sigma)$ with $\iF(K)=f$. Given a fundamental discriminant $d$, we let $\FF(\Sigma)_d$ denote the set of cubic $S_3$-fields $K\in\FF(\Sigma)$ with $\iD(K)=d$. (By convention, we consider $1$ to be a fundamental discriminant.)
In this section, we obtain asymptotics for the number of $K\in\FF(\Sigma)^{(f)}$ with $|\iD(K)|<Y$ in \S\ref{subsec:fixedF}, and the number of $K\in\FF(\Sigma)_{d}$ with $\iF(K)<Z$ in \S\ref{sec:small_d}. In particular, we obtain error terms that control the dependence on $\Sigma$.

\subsection{Counting cubic fields $K$ with fixed $\iF(K)$}\label{subsec:fixedF}

Let $f$ be a fixed positive integer, squarefree away from $3$. To count cubic
fields $K$ where $\iF(K)=f$, we appeal to a strengthening of the
Davenport-Heilbronn theorem.
Define the quantity
\[
N(\FF(\Sigma)^{(f)};Y) :=  \#\{ K\in\FF(\Sigma)^{(f)} :  |\iD(K)| < Y\}.
\]
Then we have the following:
\begin{theorem}[\cite{BTT}, Theorem 1.4]\label{thm:dh}
We have
\beq\label{eq:dh}
N(\FF(\Sigma)^{(f)};Y) =  C_1(\Sigma, f) \cdot Y 
+ C_2(\Sigma,f)\cdot Y^{5/6} + O\big(E(Y; f,\Sigma)\big),
\eeq
for constants $C_1(\Sigma,f)$ and $C_2(\Sigma,f)$ described below, and with the following
`averaged' bound on $E(Y; f, \Sigma)$: for each $f \leq F$, 
choose independent and arbitrary values $Y_f \leq Y$. Then, we
have
\[
\sum_{f \leq F} E(Y_f; f, \Sigma) \ll_\epsilon Y^{2/3 + \epsilon} F^{4/3+\epsilon} P_\Sigma^{2/3},
\]
uniformly in $F$.
\end{theorem}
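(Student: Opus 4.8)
The plan is to reduce the statement to the quantitative Davenport--Heilbronn theorem of \cite{BTT} by expressing the conditions defining $\FF(\Sigma)^{(f)}$ as local conditions on binary cubic forms, and then reading off the constants and error term.

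First I would invoke the Delone--Faddeev parametrization: isomorphism classes of cubic rings correspond bijectively to $\GL_2(\Z)$-orbits on the lattice $V_\Z$ of integral binary cubic forms; under this bijection the discriminant of a form equals that of the associated ring, and the order of its $\GL_2(\Z)$-stabilizer equals $|\Aut|$ of the ring. Irreducible forms correspond to orders in cubic fields, and such an order is maximal (hence equals $\cO_K$ for a cubic field $K$) exactly when the corresponding form is maximal at every prime $p$, a condition carved out by an open compact subset $U_p^{\max}\subseteq V_{\Z_p}$. Moreover the decomposition type of $p$ in $K$, in particular the values $v_p(\iD(K))$ and $v_p(\iF(K))$, is detected by finer open compact subsets of $U_p^{\max}$: for $p>3$, total ramification corresponds to forms congruent to a scalar times a cube at the relevant point of $\mathbb{P}^1$, while at $p=3$ one reads off the tabulated $3$-adic classes above. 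Likewise $K\otimes\Q_p\in\Sigma_p$ for $p\mid P_\Sigma$, and the signature condition contained in $\Sigma_\infty$, single out unions of such pieces (one of the two components of $V_\R\setminus\{\Disc=0\}$ in the archimedean case).

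Next, since $\iD(K)=\Disc(K)/\iF(K)^2$, I would rewrite $N(\FF(\Sigma)^{(f)};Y)$ as the number of irreducible $\GL_2(\Z)$-orbits on the set $W_f\subseteq V_\Z$ of forms that are maximal at every prime, lie in the prescribed local subset at each prime dividing $3fP_\Sigma$ (total ramification with the multiplicity recorded by $f$ at $p\mid f$; the set $\Sigma_p$ at $p\mid P_\Sigma$; the appropriate $3$-adic and archimedean classes), and have $|\Disc|<f^2Y$; for $S_3$-fields the stabilizers are trivial, so no automorphism weighting intervenes. The set $W_f$ is the intersection of $V_\Z$ with a finite union of congruence classes to a modulus built from $27$, $f^2$ and $P_\Sigma$, together with the infinitely many maximality conditions at the remaining primes; the $p$-adic density of the local condition at $p\mid f$ is the Davenport--Heilbronn local mass of totally ramified cubic rings over $\Z_p$, which is the source of the corresponding Euler factors in $C_1$ and $C_2$.

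At this point the main theorem of \cite{BTT} applies directly: it counts maximal irreducible $\GL_2(\Z)$-orbits in precisely such a union of congruence classes, combining a geometry-of-numbers count of all $\GL_2(\Z)$-orbits per class with the non-maximality sieve and a refined analysis of the cuspidal region of the fundamental domain (equivalently, the pole at $s=5/6$ of the attached Shintani zeta function, as in \cite{MR3090184,MR3127806}); this yields the main term $C_1(\Sigma,f)\,Y$, the secondary term $C_2(\Sigma,f)\,Y^{5/6}$, and the error $E(Y;f,\Sigma)$, with $C_1(\Sigma,f)$ and $C_2(\Sigma,f)$ emerging as the claimed products of the archimedean volume, the densities of $\Sigma_p$ at the bad primes, and the ramified and unramified local masses elsewhere. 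The one genuinely delicate point, and what I expect to be the main obstacle, is the \emph{averaged} error bound: estimating $E(Y;f,\Sigma)$ for a single $f$ loses a power of $f$ from the index of the congruence sublattice, so one must instead bound $\sum_{f\le F}E(Y_f;f,\Sigma)$, balancing the geometry-of-numbers error (which after extraction of the secondary term is $\ll_\epsilon Y^{2/3+\epsilon}$ per class and sums to $\ll_\epsilon Y^{2/3+\epsilon}F^{4/3}$ over the relevant classes as $f$ ranges over $[1,F]$) against the tail of the maximality sieve at primes beyond a power of $Y$, which is controlled by a Belabas--Bhargava--Pomerance-type bound on forms whose discriminant is divisible by a large square; the dependence on $\Sigma$ enters only through the bad primes $p\mid P_\Sigma$, producing the factor $P_\Sigma^{2/3}$. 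This averaged estimate, uniform in $F$ and in the arbitrary choices $Y_f\le Y$, is the technical heart of \cite{BTT}, and it is exactly the shape of error needed for the later summation over $f$ in the proof of Theorem \ref{thGD}; the remaining ingredients, namely the parametrization, the translation of conditions, and the bookkeeping of constants, are routine once \cite{BTT} is available.
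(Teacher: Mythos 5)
This theorem is quoted verbatim from \cite{BTT}, and the paper you are reading offers no proof of it: what follows the statement is only an explicit description of the constants $C_1(\Sigma,f)$ and $C_2(\Sigma,f)$, not an argument for \eqref{eq:dh} or for the averaged error bound. There is therefore no ``paper's own proof'' to compare against; the only relevant referent is the external argument in \cite{BTT}.

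Your sketch is a fair high-level description of the expected strategy there: Delone--Faddeev parametrization, translation of the conditions $\iF(K)=f$ and $K\otimes\Q_p\in\Sigma_p$ into open compact subsets of $V_{\Z_p}$, geometry-of-numbers plus the maximality sieve, and the $s=5/6$ pole of the Shintani zeta function producing the secondary term. You also correctly identify the crux: a pointwise bound for $E(Y;f,\Sigma)$ would lose a power of $f$ from the index of the relevant congruence sublattice, so the essential content is the \emph{averaged} estimate over $f\le F$ with independent cutoffs $Y_f$, uniformly in $F$. But you do not establish that bound; the clause ``sums to $\ll_\epsilon Y^{2/3+\epsilon}F^{4/3}$ over the relevant classes'' is stated, not derived, and in particular the exponent $4/3$ in $F$ (equivalently the exponent $1/3$ in $f$ after summation) and the exponent $2/3$ in $P_\Sigma$ are exactly where the real work lies. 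A few smaller inaccuracies: the claim that ``for $S_3$-fields the stabilizers are trivial, so no automorphism weighting intervenes'' needs care, since the local mass factors $1/|\Aut(K_p)|$ and the archimedean factor $1/|\Aut(K_\infty)|$ do appear in $C_1$ and $C_2$; and the modulus you attach to $W_f$ should also account for the $2$-adic place when $\Sigma_2\neq\Sigma_2^\all$. None of this is wrong in spirit, but as written the proposal is a roadmap pointing to \cite{BTT} rather than a proof, which is essentially the same role the theorem plays in this paper.
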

The leading constant $C_1(\Sigma, f)$ is described as follows. First, for a
prime $p$ and a positive integer $f$, define the set $\Sigma_p(f)$ of
{\it $f$-compatible algebras in $\Sigma_p$} to be those \'etale cubic
extensions $K_p$ of $\Q_p$ such that the powers of $p$ dividing
$\iF(K_p)$ and $f$ are the same. Then we have
\[
C_1(\Sigma, f) := \frac12\Bigl(\sum_{K_\infty\in\Sigma_\infty}
\frac{1}{|\Aut(K_\infty)|}\Bigr)
\prod_{p}\Bigl[\Bigl(\sum_{\substack{K_p\in\Sigma_p(f)}}
\frac{|\iD(K_p)|_p}{|\Aut(K_p)|}\Bigr)\Bigl(1-\frac1{p}\Bigr)\Bigr].
\]
For each prime $p>3$, when $\Sigma_p=\Sigma_p^\all$, we have
\begin{equation}\label{eq:shapeC1}
\Bigl(\sum_{\substack{K_p\in\Sigma_p(f)}}
\frac{|\iD(K_p)|_p}{|\Aut(K_p)|}\Bigr)\Bigl(1-\frac1{p}\Bigr)=
\left\{
\begin{array}{rcl}
\Bigl(1-\frac{1}{p}\Bigr)&\mbox{when}& p\mid f;\\[.2in]
\Bigl(1-\frac{1}{p^2}\Bigr)&\mbox{when}& p\nmid f.
\end{array}\right.
\end{equation}
Meanwhile, the secondary constant $C_2(\Sigma,f)$ is given by
\[
C_2(\Sigma, f) := C(\infty) \frac{ 4 \zeta(1/3)}{5 \Gamma(2/3)^3 \zeta(5/3)}
\prod_p \nu_p(\Sigma_p, f),
\]
where $C(\infty)$ is $1$, $\sqrt{3}$, or $1+\sqrt{3}$ depending on whether $\Sigma_\infty$ consists of $\R^3$, $\R\oplus\C$, or both, respectively, and 
\begin{equation}\
\nu_p(\Sigma_p, f) := \frac{\text{\footnotesize $\displaystyle\sum_{K_p\in\Sigma_p(f)}$} 
\!\frac{|\iD(K_p)|_p |\iF(K_p)|_p^{1/3}}{|\Aut(K_p)|} \text{\footnotesize $\displaystyle\int_{\scriptsize\calO_{K_p}\setminus p\calO_{K_p}}$}\text{\footnotesize $\!\!\![\calO_{K_p}\!:\!\Z_p[x]]^{2/3}dx$}}
{\text{\footnotesize $\displaystyle\sum_{K_p\in \Sigma_p^{\all}}$} 
\!\frac{|\iD(K_p)|_p |\iF(K_p)|_p^2}{|\Aut(K_p)|} \text{\footnotesize $\displaystyle\int_{\calO_{K_p}\setminus p\calO_{K_p}}$}\text{\footnotesize $\!\!\![\calO_{K_p}\!:\!\Z_p[x]]^{2/3}dx$}}.
\end{equation}
Moreover we have $C_1(\Sigma, f) < 1$ and $|C_2(\Sigma, f)| \ll f^{-1/3}$ for all $\Sigma$ and~$f$.

To compute average values of these constants, we introduce the Dirichlet series $L_1(\Sigma, s)$ and $L_2(\Sigma, s)$ given by
\begin{equation}\label{eq:constantL}
L_1(\Sigma, s):=\sum_{f}C_1(\Sigma,f)f^{-s};
\quad
L_2(\Sigma, s):=\sum_{f}C_2(\Sigma,f)f^{-s};
\end{equation}
These series satisfy the following Euler product decomposition in their domains of absolute convergence.
\begin{proposition}\label{prop:L12Euler}
For $\Re(s)>1$, we have
\begin{equation*}
\begin{array}{rcl}
L_1(\Sigma, s)&=&\displaystyle
\frac12\Bigl(\sum_{K\in\Sigma_\infty}\frac{1}{|\Aut(K)|}\Bigr)\prod_{p}\Bigl(\sum_{K\in\Sigma_p}
\frac{|\iD(K)|_p|\iF(K)|_p^s}{|\Aut(K)|}\Bigr)\Bigl(1-\frac1{p}\Bigr);
\\[.2in]
L_2(\Sigma, s-1/3)&=&\displaystyle
C(\infty) \frac{ 4 \zeta(1/3)}{5 \Gamma(2/3)^3 \zeta(5/3)}
\prod_p
\left[
\frac{\text{\footnotesize $\displaystyle\sum_{K_p\in\Sigma_p}$} 
\!\frac{|\iD(K_p)|_p |\iF(K_p)|_p^{s}}{|\Aut(K_p)|} \text{\footnotesize $\displaystyle\int_{\scriptsize\calO_{K_p}\setminus p\calO_{K_p}}$}\text{\footnotesize $\!\!\![\calO_{K_p}\!:\!\Z_p[x]]^{2/3}dx$}}
{\text{\footnotesize $\displaystyle\sum_{K_p\in \Sigma_p^{\all}}$} 
\!\frac{|\iD(K_p)|_p |\iF(K_p)|_p^2}{|\Aut(K_p)|} \text{\footnotesize $\displaystyle\int_{\calO_{K_p}\setminus p\calO_{K_p}}$}\text{\footnotesize $\!\!\![\calO_{K_p}\!:\!\Z_p[x]]^{2/3}dx$}}
\right]
.
\end{array}
\end{equation*}
\end{proposition}
\begin{proof}
To prove the first equality in the above displayed notation, note that we have 
\begin{equation} \label{eqLs}
\begin{array}{rcl}
\displaystyle L_1(\Sigma, s)
&=&\displaystyle\sum_{f\geq 1}\frac{C_1(\Sigma,f)}{f^{s}}
\\[.2in]&=&
\displaystyle
\frac12 \Big(\sum_{K\in\Sigma_\infty}\frac{1}{|\Aut(K)|}\Big)\sum_{f\geq 1}\frac{1}{f^s}
\prod_p\Bigl(\sum_{K\in\Sigma_p(f)}
\frac{|\iD(K)|_p}{|\Aut(K)|}\Bigr)\Bigl(1-\frac1{p}\Bigr)
\\[.2in]&=&\displaystyle
\frac12\Big(\sum_{K\in\Sigma_\infty}\frac{1}{|\Aut(K)|}\Big)\prod_{p}\Bigl(\sum_{K\in\Sigma_p}
\frac{|\iD(K)|_p|\iF(K)|_p^s}{|\Aut(K)|}\Bigr)\Bigl(1-\frac1{p}\Bigr),
\end{array}
\end{equation}
as necessary. The second equality follows in identical fashion.
\end{proof}


\subsection{Counting cubic fields $K$ with fixed $\iD(K)$}\label{sec:small_d}
For each nonzero fundamental discriminant $d$, define a Dirichlet series
\[
\Phi_{\Sigma,d}(s) := c_{{\rm red}}+
\sum_{K\in\FF(\Sigma)_d}\frac{1}{\iF(K)^s},
\]
where $c_{{\rm red}}$ is either $1/2$ or $0$ depending on whether or not the \'etale cubic algebra $\Q\oplus\Q(\sqrt{d})$ satisfies the splitting conditions specified by $\Sigma$.
Using Kummer theory and class field theory, Cohen, Morra, and the
second author \cite{CM, CT} proved the following explicit formula for
$\Phi_{\Sigma, d}(s)$ when $P_\Sigma = 1$, i.e.~for counting all cubic fields
whose quadratic resolvent is $\mathbb{Q}(\sqrt{d})$.

\begin{theorem}[\cite{CT}, Theorem 2.5]\label{thm:cm_explicit}
For any nonzero fundamental discriminant $d$ we 
have
\begin{equation}\label{eqn_main_cubic}
c_d\Phi_d(s)=\dfrac{1}{2}M_{1, d}(s) \prod_{p\nmid 3d}
\left(1+\dfrac{1+\bigl(\frac{-3d}{p}\bigr)}{p^s}\right)
+\sum_{E\in\LL_3(d)}M_{2,E}(s)\prod_{p\nmid 3d}\left(1+\dfrac{\om_E(p)}{p^s}\right)\;,\end{equation} where:
\begin{itemize}
\item $c_d=1$ if $d = 1$ or $d < -3$, and $c_d=3$ if $d = -3$ or $d > 1$.
\item $\LL_3(d)$ is the set of cubic fields of discriminant
  $-d/3$, $-3d$, and $-27d$.  (The first case can of course only occur
  if $3 \mid d$, and the second only if $3 \nmid d$.)
\item For any cubic field $E$ and prime $p\nmid\Disc(E)$,
  we define
\begin{equation*}
\om_E(p):=\begin{cases}
2&\text{\quad if $p$ is totally split in $E$,}\\
0&\text{\quad if $p$ is partially split in $E$,}\\
-1&\text{\quad if $p$ is inert in $E$,}
\end{cases}
\end{equation*}

The $3$-Euler factors $M_{1, d}(s)$ and $M_{2,E}(s)$ are given in
the following table (taking $k = 1$):
\end{itemize}
\end{theorem}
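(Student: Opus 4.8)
The plan is to reduce $\Phi_d(s)$ to a sum over cyclic cubic extensions of $k:=\Q(\sqrt d)$ sorted by relative conductor, and then to evaluate that sum using Kummer theory, a reflection theorem, and a prime-by-prime Euler product. The starting point is the standard dictionary: a non-Galois cubic field $K$ with Galois closure $M$ and quadratic resolvent $k$ corresponds, up to isomorphism, to the cyclic cubic extension $L:=M$ of $k$ on which the nontrivial $\sigma\in\Gal(k/\Q)$ acts by inversion on $\Gal(L/k)\cong\Z/3\Z$ (equivalently $L/\Q$ is Galois with group $S_3$), with $L=K$ in the degenerate case $d=1$. Artin's conductor formula for the standard representation of $S_3$ (which is $\mathrm{Ind}_{\Gal(M/k)}^{\Gal(M/\Q)}$ of a nontrivial character of $\Gal(M/k)$) gives $\Disc(K)=\Disc(k)\cdot N_{k/\Q}\mathfrak f(L/k)$, hence $\iF(K)^2=N_{k/\Q}\mathfrak f(L/k)$. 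Therefore $c_d\Phi_d(s)$ is one half of $\sum_L N_{k/\Q}\mathfrak f(L/k)^{-s/2}$ over the admissible $L$, with $c_d=|\mathcal O_k^{*}/(\mathcal O_k^{*})^3|$ ($=3$ exactly when $d=-3$ or $d>1$, and $=1$ otherwise) absorbing the unit ambiguity in a Kummer generator, and the $\tfrac12$ accounting for $\chi\leftrightarrow\overline\chi$.

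Next I would parametrize the admissible $L$ by ray class characters and introduce a reflection. By class field theory these $L$ correspond to the nontrivial order-$3$ characters $\chi$ of ray class groups $\Cl_{\mathfrak m}(k)$ lying in the $\sigma$-anti-invariant part, with $\mathfrak f(L/k)=\mathfrak f(\chi)$ (automatically $\sigma$-stable). Running Kummer theory over $F:=k(\zeta_3)$ and invoking the Scholz--Leopoldt reflection theorem identifies the $\sigma$-anti-invariant $3$-torsion of the ray class groups of $\Q(\sqrt d)$ with data attached to the ``mirror'' field $\Q(\sqrt{-3d})$, the third quadratic subfield of $\Q(\sqrt d,\zeta_3)$. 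Separating the characters governed by ramification alone (the ``genus'' part) from those that genuinely involve the $3$-class group of the mirror field: the former produce the term $\tfrac12 M_{1,d}(s)\prod_p\bigl(1+\bigl(1+\bigl(\tfrac{-3d}{p}\bigr)\bigr)p^{-s}\bigr)$, the local factor recording, according to whether $p$ splits, ramifies, or is inert in $\Q(\sqrt{-3d})$, how many cubic characters are available above $p$; and the latter decompose into one piece for each cubic field $E$ whose quadratic resolvent is $\Q(\sqrt{-3d})$ --- equivalently $\Disc(E)\in\{-d/3,-3d,-27d\}$, i.e.\ $E\in\LL_3(d)$.

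To finish I would convert each contribution to an Euler product. Writing the sum over characters as a sum over moduli $\mathfrak m$ weighted by $\#\{\chi:\mathfrak f(\chi)\mid\mathfrak m\}$ --- which, by the genus-theoretic/reflection input, is multiplicative in $\mathfrak m$ --- and then summing over $\mathfrak m$, the main term collapses into the displayed product, while for $E\in\LL_3(d)$ the local count at a prime $\mathfrak p\mid p$ is controlled by the splitting of $p$ in $E$, so that $\chi_E(\mathfrak p)+\chi_E(\mathfrak p^\sigma)=\om_E(p)$ appears and the contribution collapses into $M_{2,E}(s)\prod_{p\neq 3}\bigl(1+\om_E(p)p^{-s}\bigr)$. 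The remaining task is to pin down the constants by a local computation at every prime: tame primes $p>3$ are immediate; primes $p\mid d$ require identifying which local cubic characters occur; and $p=2$, $p=3$ need a full wild/exceptional analysis.

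I expect the main obstacle to be precisely this local analysis at $p=3$ (and, to a lesser extent, $p=2$). Away from $3$, the local count of cubic characters at each prime is elementary (two tame characters of conductor $\mathfrak p$, and nothing of higher conductor) and the reflection/genus computation is clean; but at $3$ the extensions $L/k$ are wildly ramified, so characters of several higher conductors occur, $\zeta_3$ interacts with $k$, and the reflection argument must be carried through in the presence of this wild ramification uniformly in the modulus $\mathfrak m$. Assembling this local information is exactly what produces the explicit $3$-Euler factors $M_{1,d}(s)$ and $M_{2,E}(s)$ in the table, and it is where essentially all of the work lies; a smaller but genuine source of casework is the treatment of the exceptional resolvents $d=1$, $d=-3$, and real $d>1$, which is what the factor $c_d$ and the appearance of $-d/3$ versus $-3d$ in $\LL_3(d)$ record.
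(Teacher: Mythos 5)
This theorem is quoted from Cohen--Thorne \cite{CT} and is not proved in the paper; however, the paper does sketch the \cite{CM}/\cite{CT} machinery in detail while establishing the generalization Theorem~\ref{thm:cm_explicit_lc}, so a comparison is possible. Your proposal is correct in outline and follows essentially the same route: Kummer theory over the biquadratic field $\Q(\sqrt{d},\zeta_3)$ (your $F$, the paper's $L$) with Galois-equivariance constraints, a conductor--discriminant computation giving $\iF(K)^2 = N_{k/\Q}\mathfrak f(M/k)$, a descent/reflection step to the mirror resolvent $\Q(\sqrt{-3d})$, and a class-field-theoretic conversion of anti-invariant $3$-torsion ray class characters into a genus piece plus one piece per $E\in\LL_3(d)$. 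Your identification of $c_d$ with $|\mathcal O_k^{*}/(\mathcal O_k^{*})^3|$ and of the genus term with the $M_{1,d}$ factor is right, and you correctly flag that the substantive remaining work is the wild local analysis at $3$ (and secondarily at $2$), which is exactly what \cite[\S5--6]{CM} and \cite{CT} carry out to produce the table of $M_{1,d}$ and $M_{2,E}$. The only presentational difference is that you frame Steps~1--2 via ray class characters of $k$ and the Scholz--Leopoldt reflection theorem up front, whereas \cite{CM} works directly with Kummer generators $\alpha\in L^*$ and an equivariant $3$-Selmer group $S_3(L)[T]$, with the class-field-theoretic reinterpretation entering only at the end (via \cite[Prop.~4.1]{CT} and Nakagawa's lemma); these are equivalent bookkeeping.
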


\bigskip

\centerline{
\begin{tabular}{|c||c|c|c|}
\hline
Condition on $d$ & $M_{1, d}(s)$ & $M_{2,E}(s),\ \Disc(E) \in \{ -k^2d/3, -3k^2d \}$ & $M_{2,E}(s),\ \Disc(E) = -27k^2d$\\
\hline\hline
$3\nmid d$       & $1+2/3^{2s}$ & $1+2/3^{2s}$        & $1-1/3^{2s}$\\
\hline
$d\equiv3\pmod9$ & $1+2/3^s$    & $1+2/3^s$           & $1-1/3^s$\\
\hline
$d\equiv6\pmod9$ & $1+2/3^s+6/3^{2s}$ & $1+2/3^s+3\om_E(3)/3^{2s}$ & $1-1/3^s$\\
\hline
\end{tabular}}

\medskip
We will use this result, together with standard analytic techniques,
to count cubic fields $K$ with fixed $D(K)$ and varying $F(K)$. Such a result
was given as \cite[Proposition 6.3]{CM} and we give a version where the dependence
of the error term on $D(K)$ is specified.

We also extend these results to $P_\Sigma > 1$, counting cubic fields with specified splitting types.
The key result is where $\Sigma_p = \Q_p^3$ for each $p \mid P_\Sigma$, corresponding
to a demand that each such $p$ split completely in each cubic field being counted. 
Write $\LL_3(P_\Sigma, d)$ for the set of cubic fields whose
discriminant is $-k^2d/3$, $-3k^2d$, or $-27k^2d$, where $k$ is any
positive divisor of $P_\Sigma$. Thus the quadratic
resolvent of every field in $\LL_3(P_\Sigma, d)$ is $\Q(\sqrt{-3d})$.

\begin{theorem}\label{thm:cm_explicit_lc}
With $\Sigma_p = \Q_p^3$ for each $p \mid P_\Sigma$ and $\LL_3(P_\Sigma, d)$ defined as above, we have
\begin{equation}\label{eqn_main_cubic_lc}
c_d 3^{\omega(P_\Sigma)} \Phi_{\Sigma, d}(s)=\dfrac{1}{2}M_{1, d}(s) \prod_{p \nmid 3d P_\Sigma} \left(1+\dfrac{1+\bigl(\frac{-3d}{p}\bigr)}{p^s}\right)
+\sum_{E\in\LL_3(P_\Sigma, d)}M_{2,E}(s)\prod_{\substack{p \nmid 3d P_\Sigma}} \left(1+\dfrac{\om_E(p)}{p^s}\right)\end{equation}
provided that $\big( \frac{d}{p} \big) = 1$ for every prime $p \mid P_\Sigma$, and $\Phi_{\Sigma, d}(s) = 0$ otherwise. Here $\omega(P_\Sigma)$ denotes the number of prime divisors of $P_\Sigma$, and if $3 \mid P_\Sigma$ then the factors $M_{1, d}(s)$ and $M_{2, E}(s)$ are to be omitted.
\end{theorem}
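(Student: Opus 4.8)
The plan is to deduce Theorem~\ref{thm:cm_explicit_lc} from Theorem~\ref{thm:cm_explicit} by adapting the Kummer- and class-field-theoretic argument of \cite{CM,CT} so as to build in the demand that each prime dividing $P_\Sigma$ split completely. (Recall that $P_\Sigma$ is squarefree.) Begin with the degenerate case: if $\bigl(\frac{d}{p}\bigr)\neq 1$ for some $p\mid P_\Sigma$, then in the Galois closure of any cubic $S_3$-field $K$ with $\iD(K)=d$ the prime $p$ either ramifies or has Frobenius a transposition, hence never splits completely; so $\FF(\Sigma)_d=\emptyset$ and $\Phi_{\Sigma,d}(s)=0$. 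Assume henceforth that $\bigl(\frac{d}{p}\bigr)=1$ for every $p\mid P_\Sigma$, and let $S$ denote the set of these primes (all coprime to $3d$).

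The crux is to express the complete-splitting conditions through the coefficients $\omega_K$ of Theorem~\ref{thm:cm_explicit}. For $p\in S$ and a cubic field $K$ with $\iD(K)=d$: if $p\mid\iF(K)$ then $p$ totally ramifies and does not split completely; if $p\nmid\iF(K)$ then $p$ is unramified in $K$, and $\omega_K(p)=2$ precisely when $p$ splits completely, while $\omega_K(p)=-1$ when $p$ is inert (the partially split value $0$ cannot occur since $\bigl(\frac{d}{p}\bigr)=1$). Hence, on the set of $K$ unramified at every prime of $S$,
\[
\charfn\bigl[K\in\FF(\Sigma)\bigr]=\prod_{p\in S}\frac{1+\omega_K(p)}{3}=3^{-\omega(P_\Sigma)}\sum_{m\mid P_\Sigma}\prod_{p\mid m}\omega_K(p),
\]
so that
\[
3^{\omega(P_\Sigma)}\,\Phi_{\Sigma,d}(s)=\sum_{m\mid P_\Sigma}\Phi_d^{(m)}(s),\qquad
\Phi_d^{(m)}(s):=\sum_{\substack{K:\,\iD(K)=d\\ p\mid P_\Sigma\,\Rightarrow\,p\text{ unramified in }K}}\frac{\prod_{p\mid m}\omega_K(p)}{f(K)^{s}}.
\]
It therefore suffices to compute $c_d\,\Phi_d^{(m)}(s)$ for each divisor $m\mid P_\Sigma$ and sum.

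For $m=1$ this is immediate: restricting the identity \eqref{eqn_main_cubic} to those $K$ unramified at $S$ simply deletes the Euler factors at the primes of $S$ from every product on its right-hand side, producing $\tfrac12 M_{1,d}(s)\prod_{p\nmid P_\Sigma}\bigl(1+\tfrac{1+(\frac{-3d}{p})}{p^{s}}\bigr)+\sum_{E\in\LL_3(d)}M_{2,E}(s)\prod_{p\neq3,\,p\notin S}\bigl(1+\tfrac{\omega_E(p)}{p^{s}}\bigr)$. For $m>1$ the weight $\prod_{p\mid m}\omega_K(p)$ is the product of the traces of the standard $S_3$-representation of $K$ at the primes of $m$; in the parametrization of \cite{CM,CT}, where $K$ is encoded by a ``mirror'' cubic character $\chi$ of a ray class group of $L=\Q(\sqrt d)$, this equals $\prod_{p\mid m}\bigl(\chi(\mfp)+\chi(\bar\mfp)\bigr)$ with $p\cO_L=\mfp\bar\mfp$. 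Feeding this into the character-sum derivation of \eqref{eqn_main_cubic} twists only the Euler factors at the primes of $S$; running that twisted computation and summing over all $m\mid P_\Sigma$, one finds that the net effect is precisely to enlarge the family $\LL_3(d)$ appearing in the second term to $\LL_3(P_\Sigma,d)$ — the new fields being exactly those of discriminant $-k^2d/3$, $-3k^2d$, or $-27k^2d$ with $k$ squarefree and supported on $S$ — while the leading term, the $3$-Euler factors $M_{1,d}(s)$ and $M_{2,E}(s)$, and the constant $c_d$ (the quadratic resolvent of every field in $\LL_3(P_\Sigma,d)$ is still $\Q(\sqrt d)$) are all unchanged. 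This is \eqref{eqn_main_cubic_lc}.

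The main obstacle is the combinatorial bookkeeping in this last step: one must check, prime by prime in $S$, that twisting the relevant Euler factor by $\chi(\mfp)+\chi(\bar\mfp)$ and summing over the $2^{\omega(P_\Sigma)}$ divisors $m$ — against the $3^{-\omega(P_\Sigma)}$ normalization — reproduces exactly the sum over $\LL_3(P_\Sigma,d)$ with the correct multiplicities and contributes nothing extra to the leading term. The cleanest place to carry this out is inside the reflection step of \cite{CM,CT}: imposing complete splitting at the primes of $S$ on the cubic fields with resolvent $\Q(\sqrt d)$ corresponds to permitting (tame) ramification at $S$ in the reflex cubic fields, those with resolvent $\Q(\sqrt{-3d})$, which is exactly why $\LL_3(d)$ is replaced by $\LL_3(P_\Sigma,d)$. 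A further, minor point is the prime $3$: when $3\mid P_\Sigma$ the complete-splitting condition at $3$ must be imposed by the same device, and one should verify that it is absorbed into the tabulated $3$-Euler factors $M_{1,d}(s)$ and $M_{2,E}(s)$ rather than altering them.
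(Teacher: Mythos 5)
Your high-level strategy is sound and is in fact an attractive reorganization of the argument. The sieve identity $\charfn[K\in\FF(\Sigma)]=\prod_{p\in S}\frac{1+\omega_K(p)}{3}$ on fields unramified at $S$ (valid because $\bigl(\frac{d}{p}\bigr)=1$ kills the partially-split value $0$) is correct, and expanding it to reduce $3^{\omega(P_\Sigma)}\Phi_{\Sigma,d}$ to a $2^{\omega(P_\Sigma)}$-term sum of $\omega_K$-twisted Dirichlet series is a legitimate dual to what the paper does. The paper's proof instead modifies the Cohen--Morra class-field-theoretic machinery directly: it imposes the solubility condition $x^3\equiv\alpha\pmod{*\mfp_z}$ by multiplying the $3$-adic moduli $\mfb$ by $\calP$, recomputes $|(Z_\mfb/Z_\mfb^3)[T]|$ to produce the $3^{\omega(P_\Sigma)}$ (your sieve normalization is exactly the dual statement), and runs the descent isomorphism $G_\mfb\simeq H_{\mfa'}$ with $\mfa'\subset\Z_{\Q(\sqrt{-3d})}$ to identify nontrivial characters with the enlarged set $\LL_3(P_\Sigma,d)$. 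In other words, the paper works on the Selmer/ideal side; you work on the character side; these carry the same information.

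The genuine gap, which you flag yourself, is the claim that feeding the twist $\prod_{p\mid m}\omega_K(p)$ into the derivation of \eqref{eqn_main_cubic} and summing over $m\mid P_\Sigma$ ``precisely enlarges $\LL_3(d)$ to $\LL_3(P_\Sigma,d)$.'' This is exactly where the hard content lies — it is the paper's Steps~2 and~3, not a corollary of them — and it is not established by your outline. Two concrete points need care. First, the character $\chi$ for which $\omega_K(p)=\chi(\mfp)+\chi(\bar\mfp)$ is a cubic ray class character of $\Q(\sqrt d)$, but the parametrization of Cohen--Morra (and hence the source of the $M_{2,E}$ and $\LL_3$ terms) runs through Kummer theory over $L=\Q(\sqrt d,\sqrt{-3})$ and then the Scholz reflection to $\Q(\sqrt{-3d})$; your sentence ``$K$ is encoded by a mirror cubic character $\chi$ of a ray class group of $L=\Q(\sqrt d)$'' conflates these two groups, and the translation between the $\omega_K$-twist on the $\Q(\sqrt d)$ side and the new ramified-at-$S$ cubic fields $E$ with resolvent $\Q(\sqrt{-3d})$ is precisely the reflection computation you would need to carry out. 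Second, for $m>1$ you must check that the $M_{1,d}$ term is annihilated by the twist (the trivial character contributes $\omega_K(p)$-weights of $2$, not $0$, so cancellation across divisors $m$ must be tracked), and that the $E$ produced are exactly those with $\iF(E)$ supported on $m$ with the correct multiplicities; neither is immediate. Your sketch is a sensible alternative framing, but it defers to \cite{CM,CT} at the same point where the paper's proof actually engages with the internals of those arguments.
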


The special cases $3 \mid P_\Sigma$ and/or $d \in \{ 1, -3 \}$ are all allowed; if $d = 1$ then $ \Phi_{\Sigma, d}(s)$ counts
cyclic cubic fields, and if $d = -3$ then the fields in $\LL_3(P_\Sigma, d)$ are cyclic. 

\begin{remark}\label{remark13}
The explicit form of $\Phi_{\Sigma,d}$ stated in Theorem \ref{thm:cm_explicit_lc} will not be used in the proofs of our main results. 
The ``average residue computation'' that is required for our proofs will be obtained indirectly from results proved using geometry-of-numbers methods.

All that is necessary for us is an asymptotic formula for the partial sums of $\Phi_{\Sigma,d}$ with 
bounds on the error; this is done in Theorem \ref{thm:cmcount} by interpreting
$\Phi_{\Sigma,d}$ as the weighted sum of incomplete Dedekind zeta functions and incomplete Artin $L$-functions, both having conductor $\ll_{\Sigma} d$.
\end{remark}

We then immediately show that $\Phi_{d,\Sigma}(s)$ can be written as such a weighted sum in the case when $\Sigma$ is an arbitrary finite collection of splitting types in the following steps:

\begin{itemize}

\item The splitting type at infinity: the sign of the discriminant of a cubic field is the same as the sign of the discriminant of its quadratic resolvent field. Hence, $\Phi_{\Sigma,d}$ will be $0$ if the prescribed splitting type at infinity is incompatible with the sign of $d$, and unchanged if it is compatible.

\item 
$(21)$ -- A prime $p$ is partially split in $K$ if and only if it is unramified in $K$ and inert in $\Q(\sqrt{d})$. Therefore, 
if $\big( \frac{d}{p} \big) = 1$ for any such prime $p$ then $\Phi_{\Sigma, d}(s) = 0$, and 
otherwise we eliminate all of the $p$-Euler factors from $\Phi_{\Sigma, d}(s)$.
\item 
$(1^2 1)$ -- A prime $p$ is partially ramified in $K$ if and only if it is ramified in $Q(\sqrt{d})$; therefore, 
$\Phi_{\Sigma, d} = 0$ if $p \nmid d$ for any such $p$, and otherwise $\Phi_{\Sigma, d}(s)$ is unchanged.
\item 
$(1^3)$ -- A prime $p$ is totally ramified in $K$ if and only if $p \mid f(K)$. Accordingly we remove the constant terms
from the $p$-Euler factors. 
\item 
The remaining primes $p$ are required to have splitting types $(111)$ or $(3)$. We handle the $(111)$ case by applying equation \eqref{eqn_main_cubic_lc} directly,
and the $(3)$ case by inclusion-exclusion. \end{itemize}

In summary, the proof of Theorem \ref{thm:cm_explicit_lc} follows from a careful reading of \cite{CM} and \cite{CT}. The proof in \cite{CM} proceeds by setting $L = \Q(\sqrt{d}, \sqrt{-3})$, and 
enumerating those cyclic cubic extensions $N_z/L$ which contain an appropriate $K$. By Kummer theory, any such extension is of the form 
$N_z = L(\sqrt[3]{\alpha})$. Writing $\alpha\mathbb{Z}_L = \mfa_0 \mfa_1^2 \mfq^3$ for squarefree integral coprime ideals $\mfa_0$ and $\mfa_1$,
the conductor $\mff(N/\Q(\sqrt{d}))$ is given (see \cite[Theorem 3.7]{CM}) by $\mfa_0 \mfa_1$ times a $3$-adic factor, and this $3$-adic factor
depends on the solubility of $x^3 - \alpha$ modulo powers of $3$. 

The splitting conditions in $K/\Q$ are equivalent to solubility in $L$ of $x^3 - \alpha$ modulo $P_\Sigma$, or modulo $3P_\Sigma$ if $3 \mid P_\Sigma$, and hence the existing machinery of \cite{CM}
is well suited to select for them. This is the reason that Theorem \ref{thm:cm_explicit_lc} has a very similar shape to Theorem \ref{thm:cm_explicit}. 

\medskip
We now proceed to explain the proof of  Theorem \ref{thm:cm_explicit_lc} in more detail.
As discussed above we may assume that $\big( \frac{d}{p} \big) = 1$ for every $p \mid P_\Sigma$, as
otherwise $\Phi_{\Sigma, d}(s) = 0$. Write $\calP = P_\Sigma$ if $3 \nmid P_\Sigma$, and 
$\calP = 3P_\Sigma$ if $3 \mid P_\Sigma$. 

\medskip
{\bf Step 1 -- Parametrization}.
Let $L = \Q(\sqrt{d}, \sqrt{-3})$ as before. 
In \cite[Proposition 2.7]{CM}, Cohen and Morra enumerate the set of cubic fields $K$ with resolvent $\Q(\sqrt{d})$; each occurs 
as the cubic subextension (unique up to isomorphism) of a field $N_z = L(\sqrt[3]{\alpha})$ with $\alpha = \alpha_0 u$, where $\alpha_0$ is determined
by the class in $I/I^3$ of the ideal $(\alpha)$, and $u$ represents an element
$\overline{u}$ of a $3$-Selmer group $S_3(L)[T]$. The notation $[T]$ indicates that $\overline{u}$ is annihilated by two particular elements of 
$\F_3[\Gal(L/\Q)]$ (one if $d = 1$ or $d = -3$).

A prime $p$ splits in such a $K$ if and only if: (1) it splits in $\Q(\sqrt{d})$, and
(2) every prime $\mfp_z$ of $L$ above $p$ splits completely in $N_z$. 
Since $L$ contains the third roots of unity, 
each such $\mfp_z$ of $L$ splits completely in $N_z$ if and only if $x^3 = \alpha$ is soluble in the completion of $L$ at $\mfp_z$. By Hensel's
lemma, if $3 \nmid \mfp_z$ this happens if and only if $x^3/\alpha \equiv 1 \pmod{ {}^\ast \mfp_z}$ is soluble in $L$. Further, if $\alpha$ is coprime 
to $3$, the primes
above $3$ split in $N_z/L$ if and only if $x^3/\alpha \equiv 1$ is soluble modulo $9$; to see this, note that if $v_3(\beta^3 - \alpha) > \frac{3}{2}$ with
$\alpha, \beta$ integral, then $v_3((\beta')^3 - \alpha) > v_3(\beta^3 - \alpha)$ with $\beta' := \beta - \frac{\beta^3 - \alpha}{3 \beta^2}$, yielding 
a sequence of $\beta_i$ converging to a solution of $x^3 = \alpha$ in each $3$-adic completion of $L$. 

\smallskip
{\bf Step 2 -- Conductors and Selmer group counting.}
In \cite[Theorem 3.7]{CM}, a formula is given for the conductor $\mff(N/\Q(\sqrt{d}))$. One writes $\alpha \Z_L = \mfa_0 \mfa_1^2 \mfq^3$ where $\mfa_0$ and $\mfa_1$ are integral
coprime squarefree ideals, has $\mfa_0 \mfa_1 = \mfa_{\alpha} \Z_L$ for an ideal $\mfa_{\alpha}$ of $\Q(\sqrt{d})$, and has that $\mff(N/\Q(\sqrt{d}))$ is the product of $\mfa_\alpha$
times a complicated $3$-adic factor, depending on the solubility of $x^3/\alpha \equiv 1 \pmod{ {}^* \mfp_z^n }$ for ideals $\mfp_z$ over $3$. They enumerate these
$3$-adic factors by inclusion-exclusion, involving a quantity
\[
f_{\alpha_0}(\mfb) = \# | \{ \overline{u} \in S_3(L)[T], \ x^3/(\alpha_0 u) \equiv 1 \pmod{ {}^* \mfb} \ \textnormal{ soluble in } L \} |,
\]
where $\mfb$ ranges over (possibly fractional) powers of $3$. This leads (\cite[Proposition 4.6]{CM}) to a formula for $\Phi_d(s)$, where $\mfb$ ranges over
a set of $3$-adic ideals $\mathcal{B}$, and $f_{\alpha_0}(\mfb)$ appears as a counting function for the number of ideals with fixed conductor. 

As discussed above, the splitting conditions in places in $S$ are equivalent to requiring that
$x^3/(\alpha_0 u) \equiv 1$ be soluble modulo other ideals. If $3 \nmid P_\Sigma$, multiply each $\mfb$ by $\calP$.
If $3 \mid P_\Sigma$, then $3$ cannot ramify in any cubic field being counted: the sum over $\mfb \in \mathcal{B}$ and all $3$-adic factors disappear from $\Phi_d(s)$. 
In place of this sum, one takes $\mfb$ equal to $\calP$. 

The computation of $f_{\alpha_0}(\mfb)$ is carried out in Section 5 of \cite{CM}, and also in Morra's thesis \cite{morra} where more detailed proofs are presented. 
One checks that, when varying $\mfb$ as above, the proofs are identical through Lemma 5.4. 

We diverge somewhat in Lemma 5.6, which computes the size of $(Z_\mfb / Z_\mfb^3)[T]$, where
$Z_{\mfb} := (\Z_L / \mfb \Z_L)^*$. By the Chinese remainder theorem, the size of this group is multiplicative in $\mfb$, so it suffices to carry out the computation for $\mfb = (9)$ or
$\mfb = (p)$ for $p$ a rational prime other than $3$. 

For $F$ equal to $L$, $\Q(\sqrt{-3D})$, or $\Q$ write $\Gamma_{F, \mfb}$
for the multiplicative group of $\Z_F / (\Z_F \cap \mfb)$ modulo cubes, so that $\Gamma_{L, \mfb} = Z_\mfb/Z_\mfb^3$ by
definition. Then for $d \neq 1, -3$ a `descent' argument similar to that
presented in the proof of \cite[Proposition 3.4]{CT} yields an isomorphism
$\Gamma_{L, \mfb}[T] \simeq \Gamma_{\Q(\sqrt{-3d}), \mfb}[1 + \tau]$, and when $d = 1$ this 
holds (tautologically) as an equality. Similarly to \cite[Lemma 5.6]{CT},
we obtain
\begin{equation}\label{eq:zb_size}
|(Z_\mfb/Z_\mfb^3)[T]| = \begin{cases}
|\Gamma_{\Q(\sqrt{-3d}), \mfb}|/|\Gamma_{\Q, \mfb}|
& \textnormal{if \ } d \neq -3 \\
|\Gamma_{\Q, \mfb}| & \textnormal{if \ } d = -3. \\
\end{cases}
\end{equation}
By direct computation, we readily check that the right side of \eqref{eq:zb_size} is $3$ in all cases.
(Recall that $\big( \frac{d}{p} \big) = 1$.)
This yields a version of \cite[Theorem 6.1]{CM}, which gives an expression for $\Phi_d(s)$ in terms
of characters of $G_\mfb := (\Cl_{\mfb}(L) / \Cl_{\mfb}(L)^3)[T]$,
with the following modifications:
\begin{itemize}
\item 
If $3 \nmid P_\Sigma$, then each ideal $\mfb \in \mathcal{B}$ is multiplied by $\calP$, and 
$|(Z_\mfb / Z_\mfb^3)[T]|$ is multiplied by $3^{\omega(P_\Sigma)}$.
\item
If $3 \mid P_\Sigma$, then the sum over $\mfb \in \mathcal{B}$ is replaced with the single choice $\mfb = \calP$; 
$(Z_\mfb / Z_\mfb^3)[T]$ has size $3^{\omega(P_\Sigma)}$. 
\end{itemize}

\smallskip
{\bf Step 3 -- Interpretation in terms of field counting.}
For $d \neq 1, -3$ the analogue of \cite[Proposition 3.4]{CT} continues to hold, yielding a `descent' isomorphism $G_\mfb \simeq H_{\mfa'}$, where
$\mfa' := \mfb \cap \Z_{\Q(\sqrt{-3d})}$, and $H_{\mfa'} :=  \big(\Cl_{\mfa'}(\Q(\sqrt{-3d})) / \Cl_{\mfa'}^3(\Q(\sqrt{-3d})) \big)[1 + \tau]$. Then, 
Proposition 4.1 of \cite{CT} uses class field theory to establish 
a bijection between pairs of nontrivial characters of $G_{\mfb}$ and cubic fields $E$. The same argument
continues to hold, with the set of cubic fields $E$ is expanded to those whose discriminant is equal to $-3d$ times the
square of any rational integer divisor of $\mfb$. The second half of the proof of Proposition 4.1 in \cite{CT} is unnecessary, as the conclusion follows more simply from Lemma 1.3 and equation (1.3) of earlier work of Nakagawa \cite{nakagawa}. 

If $d = 1$ then as before the `descent' isomorphism is replaced by an equality and we proceed identically. If $d = -3$ then we obtain
$G_\mfb \simeq \Cl_{\mfa'}(\Q) / \Cl_{\mfa'}^3(\Q)$ with $\mfa' = \mfb \cap \Z$ and a more direct
application of class field theory establishes the required bijection.

This completes the proof.

\bigskip
We turn now to the analytic consequences. Let $d\neq -3$ be a fundamental discriminant and let $\Sigma$ be a finite collection of splitting types such that $\FF(\Sigma)_d$ is non-empty. In this case $\Phi_{\Sigma,d}(s)$ is a Dirichlet series with nonnegative coefficients, and we will
see that it has a simple pole at $s = 1$ with positive residue.
Define the quantity
\begin{equation*}
N(\FF(\Sigma)_d;Z):=\#\bigl\{K\in \FF(\Sigma)_d:\iF(K)<Z\bigr\}.
\end{equation*}
Then we have the following consequence of Theorem \ref{thm:cm_explicit_lc} and its extension to arbitrary splitting conditions described after Remark \ref{remark13}.
\begin{theorem}\label{thm:cmcount}
Let $d$ and $\Sigma$ be as above.
Then we have
\begin{equation*}
N(\FF(\Sigma)_d;Z)
=\Res_{s=1}\bigl(\Phi_{\Sigma,d}(s)\bigr)\cdot Z
+O_{\epsilon}\Bigl(|\LL_3(P_{\Sigma},d)||d|^{1/6}P_\Sigma^{1/3}Z^{2/3+\epsilon}\Bigr).
\end{equation*}
\end{theorem}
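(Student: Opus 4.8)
The plan is to deduce Theorem~\ref{thm:cmcount} from the explicit formula of Theorem~\ref{thm:cm_explicit_lc} by a contour-integration argument, so that essentially all the content is complex-analytic, the arithmetic input being entirely packaged in Theorem~\ref{thm:cm_explicit_lc}. First I would split $\Phi_{\Sigma,d}(s) = \phi^{(0)}_{\Sigma,d}(s) + \phi^{(1)}_{\Sigma,d}(s)$, where $\phi^{(1)}_{\Sigma,d}(s)$ is $(c_d 3^{\omega(P_\Sigma)})^{-1}$ times the second summand of \eqref{eqn_main_cubic_lc}, and then recognize the Euler products in each piece. Using $1+\frac{1+\bigl(\frac{-3d}{p}\bigr)}{p^s} = (1+p^{-s})(1+\chi_{-3d}(p)p^{-s})(1+O(p^{-2s}))$, with $\chi_{-3d}$ the quadratic character attached to $\Q(\sqrt{-3d})$ (conductor $\ll |d|$), one gets, for $p\neq 3$, a factorization
\[
\phi^{(0)}_{\Sigma,d}(s)=\frac{M_{1,d}(s)}{2c_d 3^{\omega(P_\Sigma)}}\cdot\frac{\zeta(s)}{\zeta(2s)}\cdot\frac{L(s,\chi_{-3d})}{L(2s,\chi_{-3d})}\cdot H_d(s)\cdot\prod_{p\mid P_\Sigma}\Bigl(1+\tfrac{1+\bigl(\frac{-3d}{p}\bigr)}{p^s}\Bigr)^{-1},
\]
where $H_d(s)$ is an Euler product converging absolutely (hence bounded) on $\Re(s)>\tfrac12$. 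Likewise, since $\omega_E(p)$ is the trace of Frobenius in the standard representation, $\prod_p (1+\omega_E(p)p^{-s}) = \bigl(\zeta_E(s)/\zeta(s)\bigr)\cdot H_E(s)$ with $H_E$ absolutely convergent on $\Re(s)>\tfrac12$, so each term of $\phi^{(1)}_{\Sigma,d}$ is a product of $\zeta_E(s)/\zeta(s)$, such an $H_E$, the explicit $3$-factor $M_{2,E}(s)$, and the finitely many (rational in $p^{-s}$, hence $\ll P_\Sigma^\epsilon$) Euler factors removed at primes dividing $P_\Sigma$. Since $\zeta_E(s)/\zeta(s)$ is entire and nonzero at $s=1$ for every cubic field $E$, it follows that $\phi^{(1)}_{\Sigma,d}$ is holomorphic on $\Re(s)>\tfrac12$, while $\phi^{(0)}_{\Sigma,d}$ is holomorphic on $\Re(s)>\tfrac12$ except for a simple pole at $s=1$ inherited from $\zeta(s)$, whose residue is exactly the asserted main-term constant.

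Next I would record growth estimates on vertical lines $\Re(s)=\sigma$ for $\tfrac12<\sigma\leq 1+\epsilon$, tracking the dependence on $|d|$, $P_\Sigma$, and $|t|$. The convexity bound gives $|\zeta(\sigma+it)L(\sigma+it,\chi_{-3d})|\ll (|d|(1+|t|)^2)^{(1-\sigma)/2+\epsilon}$. For each $E\in\LL_3(\Sigma,d)$ the conductor of the degree-$2$ $L$-function $\zeta_E(s)/\zeta(s)$ is $|\Disc(E)|\ll P_\Sigma^2|d|$, so convexity gives $|\zeta_E(\sigma+it)/\zeta(\sigma+it)|\ll (P_\Sigma^2|d|(1+|t|)^2)^{(1-\sigma)/2+\epsilon}$. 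The reciprocals $1/\zeta(2s)$, $1/L(2s,\chi_{-3d})$, $1/\zeta_E(2s)$ are $\ll(\log Z)^{O(1)}$ as soon as $\Re(s)\geq\tfrac12+(\log Z)^{-1}$ (then $\Re(2s)\geq 1+2(\log Z)^{-1}$ lies strictly to the right of every zero, with $|L(\sigma',\cdot)|\gg \zeta(\sigma')^{-\deg}$ from the Euler product), and the correction products $H_d,H_E$ together with the $3$- and $P_\Sigma$-Euler factors are $\ll(\log Z)^{O(1)}\ll Z^\epsilon$ there, uniformly in $E$. Summing the at most $|\LL_3(\Sigma,d)|$ terms of $\phi^{(1)}_{\Sigma,d}$ yields, on $\Re(s)=\tfrac12+(\log Z)^{-1}$,
\[
|\Phi_{\Sigma,d}(s)|\ll |\LL_3(\Sigma,d)|\,(P_\Sigma^2|d|)^{1/4}(1+|t|)^{1/2}(|d|P_\Sigma Z)^\epsilon .
\]

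Finally I would apply a truncated Perron formula to $\Phi_{\Sigma,d}(s)=\sum_{K\in\FF(\Sigma)_d}\iF(K)^{-s}$ (all coefficients $\geq 0$), writing $N(\FF(\Sigma)_d;Z)$ as $\frac{1}{2\pi i}\int_{1+\epsilon-iT}^{1+\epsilon+iT}\Phi_{\Sigma,d}(s)\frac{Z^s}{s}\,ds$ up to an error $\ll |\LL_3(\Sigma,d)|Z^{1+\epsilon}/T$. Shifting the contour to $\Re(s)=\tfrac12+(\log Z)^{-1}$ crosses the simple pole at $s=1$, contributing the residue $\Res_{s=1}\phi^{(0)}_{\Sigma,d}(s)\cdot Z$; the shifted vertical integral and the two horizontal segments are controlled by the estimate above, contributing $\ll |\LL_3(\Sigma,d)|(P_\Sigma^2|d|)^{1/4}Z^{1/2}T^{1/2}Z^\epsilon$. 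Balancing $Z^{1+\epsilon}/T$ against $(P_\Sigma^2|d|)^{1/4}Z^{1/2}T^{1/2}$ via $T=Z^{1/3}(P_\Sigma^2|d|)^{-1/6}$ gives the claimed error $O_\epsilon\bigl(|\LL_3(\Sigma,d)||d|^{1/6}P_\Sigma^{1/3}Z^{2/3+\epsilon}\bigr)$; the complementary range $T<1$, i.e.\ $Z<P_\Sigma|d|^{1/2}$, is handled trivially, since there both $N(\FF(\Sigma)_d;Z)\leq Z^{1+\epsilon}\Phi_{\Sigma,d}(1+\epsilon)\ll |\LL_3(\Sigma,d)|Z^{1+\epsilon}$ and the main term $\ll Z^{1+\epsilon}$ are already $\ll |\LL_3(\Sigma,d)||d|^{1/6}P_\Sigma^{1/3}Z^{2/3+\epsilon}$.

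The main obstacle is the growth-bound step, and specifically extracting the precise conductor dependence: one must observe that the ``non-diagonal'' part $\phi^{(1)}_{\Sigma,d}$ is governed by the degree-$2$ $L$-functions $\zeta_E(s)/\zeta(s)$ whose conductors can be as large as $P_\Sigma^2|d|$, and it is their convexity exponent $\tfrac14$, used near the critical line and then traded against the Perron tail, that forces the exponents $\tfrac23$, $\tfrac16$, $\tfrac13$ in the error term. A subsidiary technical nuisance is that the auxiliary Euler products $H_d,H_E$ converge only on $\Re(s)>\tfrac12$, so the contour cannot be pushed all the way to $\Re(s)=\tfrac12$; moving instead to $\Re(s)=\tfrac12+(\log Z)^{-1}$ and absorbing the resulting powers of $\log Z$ into $Z^\epsilon$ circumvents this. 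Everything else --- the residue evaluation at $s=1$, and the bookkeeping of the finitely many bad Euler factors at $3$ and at primes dividing $P_\Sigma$ --- is routine given Theorem~\ref{thm:cm_explicit_lc}.
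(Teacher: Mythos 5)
Your proof is correct and follows essentially the same route as the paper: factor each Euler product in $\Phi_{\Sigma,d}(s)$ into (incomplete) degree-$1$ and degree-$2$ $L$-functions of conductor $\ll |d|P_\Sigma^2$ times an Euler product absolutely convergent on $\Re(s)>\tfrac12$, apply convexity, and run a Perron contour shift with a cutoff $T$ balanced to $Z^{1/3}|d|^{-1/6}P_\Sigma^{-1/3}$. You are in fact slightly more careful than the published argument in tracking the factor $|\LL_3(\Sigma,d)|$ through the intermediate growth bound and in disposing of the trivial range $Z\ll |d|^{1/2}P_\Sigma$, but neither refinement changes the method.
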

\begin{proof}
As this is standard, we give a brief account. Write the left side as
\begin{equation*}
\#\bigl\{K\in \FF(\Sigma)_d:\iF(K)<Z\bigr\}
=\frac{1}{2\pi i}\int_{2 - i \infty}^{2 + i \infty} \Phi_{\Sigma, d}(s) \frac{Z^s}{s}ds.
\end{equation*}
We then write each Euler product in $\Phi_{\Sigma, d}(s)$ 
as the product of a Dedekind zeta function $\zeta_{\mathbb{Q}(\sqrt{-3d})}(s)$ or an irreducible degree $2$ Artin $L$-function, times 
a function holomorphic and bounded in any half plane $\Re(s) \geq \sigma_0 > \frac12$.

As all of the $L$-functions have conductor $\ll |d|P_\Sigma^2$, the convexity bound yields 
\begin{equation*}
\bigg|\Phi_{\Sigma, d}(s) - \frac{\Res_{s=1}\bigl(\Phi_{\Sigma,d}(s)\bigr)}{s - 1}\bigg| \ll  |\LL_3(P_{\Sigma},d)| \cdot |d|^{1/4}\big((1 + t)P_\Sigma\big)^{1/2},
\end{equation*}
uniformly in $\Re(s) = \sigma + it$ with 
$1>\sigma \geq \sigma_0$. 

Pick $T>1$ to be optimized later. We shift the contour to the left, picking up one residue at $1$, ending up with a sum of the following integrals: from $1+\epsilon\pm iT$ to
$1+\epsilon\pm i\infty$; from $1+\epsilon\pm iT$ to $\sigma_0\pm iT$; and from $\sigma_0-iT$ to $\sigma_0+iT$. The residue gives the required main term, while the sum of these integrals is
\begin{equation*}
\ll |\LL_3(P_{\Sigma},d)| \cdot \left( \frac{Z^{1+\epsilon}}{T}+Z^{\sigma_0}T^{1/2}|d|^{1/4}P_\Sigma^{1/2} \right).
\end{equation*}
The result now follows by optimizing the value of $T$ to be $Z^{1/3}/(d^{1/6}P_\Sigma^{1/3})$.
\end{proof}

\begin{remark}
The error terms in the above theorem can clearly be improved by using subconvex estimates in place of the convexity bound. However, we do not state this improvement since we have no need for it.
\end{remark}

In the case $d = -3$, the same result and proof hold, except that $\bigl(\frac{-3d}{p}\bigr) = 1$ for all $p \nmid 3d$, so that $\Phi_{\Sigma,-3}(s)$ has a double pole at $s = 1$, as opposed to a simple pole. Therefore, as explained in \cite[Corollary 7.4]{CM}, we obtain the asymptotic \eqref{eq:CM_result1} for $P_{\Sigma} = 1$, and for $P_{\Sigma} > 1$ this
generalizes to
\begin{equation}\label{eq:CM_result2}
\#\{K\in\FF(\Sigma): \iD(K) = -3, \iF(K) < Z\}
=
C_1(\Sigma) Z (\log(Z) + C_2(\Sigma) - 1) + O(P_\Sigma^{1/3 + \epsilon} Z^{2/3 + \epsilon}),
\end{equation}
where
\[
C_1(\Sigma) := C_1 \prod_{\substack{p \mid P_{\Sigma} \\ p \neq 3}} \frac{p}{3p + 6} \cdot
\prod_{\substack{p \mid P_{\Sigma} \\ p = 3}} \frac{1}{7}
\]
\[
C_2(\Sigma) := C_2 + \sum_{\substack{p \mid P_{\Sigma} \\ p \neq 3}} \frac{2 \log p}{p + 2}
+ \sum_{\substack{p \mid P_{\Sigma} \\ p = 3}} \frac{6}{7} \log 3.
\]

\section{The asymptotics of cubic fields with bounded invariants}

For a finite collection $\Sigma$ of splitting types and positive real numbers $Y$ and $Z$, define
\begin{equation*}
N(\Sigma;Y,Z):=\bigl\{K\in\FF(\Sigma):3 \neq |\iD(K)|<Y,\;\iF(K)<Z\bigr\}.
\end{equation*}
In this section, we compute asymptotics for $N(\Sigma;Y,Z)$. We will handle the `large $Y$ case'
(i.e., large $\frac{\log Y}{\log Z}$)
using Theorem \ref{thm:dh} and the
`small $Y$ case' using Theorem \ref{thm:cmcount}. Our error terms are strong enough that these ranges of $Y$ overlap, yielding 
an asymptotic estimate for all $Y$ and $Z$. 
Indeed, we obtain asymptotic formulas with different expressions for the main terms, which we may then conclude are equal. 

These results will be used in the proofs of Theorems \ref{thradical} and
\ref{thGD}(c), where $\iD$ and $\iF$ are given equal weight. For parts (a) and (b)
of Theorem \ref{thGD} we will instead use a more direct approach so as to optimize the error terms.
All of these proofs will be given 
in Section \ref{sec:ordering}. 

\smallskip
We begin with the following important uniformity estimate due to
Davenport--Heilbronn (see, e.g., \cite[Lemma 3.3]{MR2641942}).
\begin{lemma}\label{unifDH}
The number of cubic fields $K$ such that $|\Disc(K)|<X$ and $\iF(K)=f$
is bounded by $O_\epsilon(X^{1+\epsilon}/f^2)$.
\end{lemma}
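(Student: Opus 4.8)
The plan is to deduce this from the count of cubic fields whose Galois closure contains a fixed quadratic resolvent, i.e. from Theorem \ref{thm:cmcount} (with $\Sigma$ unrestricted, so $P_\Sigma=1$), combined with a tail summation over the possible quadratic resolvent fields. Recall that a cubic field $K$ with $\iF(K)=f$ has discriminant $\Disc(K)=\iD(K)f^2$, where $\iD(K)=d$ is a fundamental discriminant. The constraint $|\Disc(K)|<X$ together with $\iF(K)=f$ therefore forces $|d|<X/f^2$. Consequently the quantity to be bounded is
\begin{equation*}
\#\{K : |\Disc(K)|<X,\ \iF(K)=f\} = \sum_{\substack{d\ \mathrm{fund.\ disc.}\\ |d|<X/f^2}} N\big(\FF_d;Z\big)\Big|_{Z\to\infty},
\end{equation*}
but more precisely each such $K$ also satisfies $\iF(K)=f$ exactly, so one is really counting, for each admissible $d$, the number of cubic fields with quadratic resolvent $\Q(\sqrt d)$ and with $\iF$ equal to the fixed value $f$ (this includes the constraint that the primes dividing $f$ totally ramify, up to the usual bounded powers of $3$). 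Write this inner count as $a_d(f)$; then the quantity in the lemma is $\sum_{|d|<X/f^2} a_d(f)$.

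The key input is that $a_d(f)$ is small on average over $d$: by Theorem \ref{thm:cm_explicit} (the Cohen--Morra--FT explicit formula), $\sum_f a_d(f) f^{-s} = \Phi_d(s)$ has a simple pole at $s=1$ with residue $\ll_\epsilon |d|^\epsilon$ (the residue is essentially a product of the relevant $L$-value $L(1,\chi_d)$-type factors, and by the convexity/Siegel-type bounds it is $O_\epsilon(|d|^\epsilon)$; alternatively one can read this off from the fact that the total number of cubic fields with $|\Disc(K)| < X$ is $O(X)$ by Davenport--Heilbronn, together with non-negativity). Hence for any fixed $f$ one has $a_d(f) \le \sum_{f'\ :\ f'\ \text{compatible}} a_d(f') \ll_\epsilon |d|^\epsilon \cdot (\text{something}) $; but that alone is not quite enough, since summing $|d|^\epsilon$ over $|d|<X/f^2$ gives only $O_\epsilon((X/f^2)^{1+\epsilon})=O_\epsilon(X^{1+\epsilon}/f^2)$ --- which is in fact precisely the claimed bound. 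So the argument is: bound $a_d(f)$ trivially by $\sum_{f'} a_d(f') \cdot \charfn[f\mid \text{stuff}]$, note that each individual $a_d(f)$ is at most the number of cubic fields with resolvent $\Q(\sqrt d)$ and bounded $\iF$, which by Theorem \ref{thm:cmcount} (taking $Z$ just above $f$, or simply using the residue being $O_\epsilon(|d|^\epsilon)$ and positivity of coefficients) is $O_\epsilon(|d|^\epsilon f^{\epsilon})$, and then sum over $|d|<X/f^2$.

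Carrying this out: by positivity of the Dirichlet coefficients of $\Phi_d(s)$ and a Mellin/Perron argument exactly as in the proof of Theorem \ref{thm:cmcount}, the number of cubic fields $K$ with $\iD(K)=d$ and $\iF(K)<2f$ is $\ll_\epsilon |d|^\epsilon f^\epsilon + |d|^{1/6}f^{2/3+\epsilon}$; in particular $a_d(f) \ll_\epsilon |d|^\epsilon f^\epsilon$ whenever $f \ll |d|^{1/2}$, and otherwise the total contribution of such large $f$ to the lemma's count is separately controlled since then $|d| < X/f^2$ is a short range. Summing $a_d(f)\ll_\epsilon |d|^\epsilon f^\epsilon$ over fundamental discriminants $|d|<X/f^2$ yields
\begin{equation*}
\#\{K : |\Disc(K)|<X,\ \iF(K)=f\} \ll_\epsilon \sum_{|d|<X/f^2} |d|^\epsilon f^\epsilon \ll_\epsilon \frac{X^{1+\epsilon}}{f^{2-\epsilon}} \ll_\epsilon \frac{X^{1+\epsilon}}{f^2},
\end{equation*}
after replacing $\epsilon$ by a smaller quantity and absorbing the $f^\epsilon$. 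The main obstacle is ensuring the implied constant is genuinely uniform in $f$ --- i.e. that the error term in the per-$d$ count does not carry hidden $f$-dependence worse than $f^\epsilon$ --- which is exactly why I would route through the explicit Dirichlet series $\Phi_d(s)$ and its pole, where the $f$-dependence of coefficients is transparent, rather than through a black-box application of Davenport--Heilbronn with congruence conditions. (Indeed this estimate is classical and is originally due to Davenport--Heilbronn; the above is simply a way to see it cleanly from the tools already assembled in this section.)
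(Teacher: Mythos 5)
The paper does not prove Lemma~\ref{unifDH}: it is stated as a classical uniformity estimate ``due to Davenport--Heilbronn'', whose standard proof runs through the geometry of numbers on binary cubic forms, imposing the mod-$f^2$ congruence conditions forced by total ramification at primes dividing $f$. Your plan --- fiber over $d=\iD(K)$ with $|d|<X/f^2$ and control the Dirichlet coefficients $a_d(f)$ of $\Phi_d(s)$ via the Cohen--Morra formula --- is a genuinely different route and can be made to work, but the key step as you have written it is incorrect.

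The bound you claim, $a_d(f)\ll_\epsilon |d|^\epsilon f^\epsilon$, is too strong to be accessible. From Theorem~\ref{thm:cm_explicit}, $c_d\Phi_d(s)$ is a sum of $1+|\LL_3(d)|$ Euler products, each with local factors $1+c_p/p^s$ having $|c_p|\leq 2$; hence by squarefreeness the $f$-th coefficient of each piece is $\ll_\epsilon f^\epsilon$, giving only
$a_d(f)\ll_\epsilon\bigl(1+|\LL_3(d)|\bigr)f^\epsilon$.
To convert this to $|d|^\epsilon$ you would need $|\LL_3(d)|\ll_\epsilon |d|^\epsilon$, i.e.\ an essentially sharp bound on $3$-torsion in the class group of $\Q(\sqrt{-3d})$ (by Nakagawa/Hasse), which is open; unconditionally one only has $|\LL_3(d)|\ll_\epsilon |d|^{1/2+\epsilon}$, and Ellenberg--Venkatesh gives $|d|^{1/3+\epsilon}$. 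Moreover, the fall-back you propose via a Perron argument as in Theorem~\ref{thm:cmcount} does not rescue the claim: taking $Z\approx 2f$ there yields a \emph{main term} of size $\Res\cdot Z\asymp f$ (not $f^\epsilon$), so the resulting bound is at best $a_d(f)\ll_\epsilon|\LL_3(d)|\bigl(|d|^\epsilon f+|d|^{1/6}f^{2/3+\epsilon}\bigr)$, and the ``whenever $f\ll|d|^{1/2}$'' caveat does not close the gap since the first term is already $\gg f$. Individual Dirichlet coefficients cannot be extracted from Perron partial sums when the error exceeds~$1$; they must be read off the Euler product directly.

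The repair is simple once you keep $|\LL_3(d)|$ rather than discarding it: using $a_d(f)\ll_\epsilon(1+|\LL_3(d)|)f^\epsilon$ and the Davenport--Heilbronn bound $\sum_{|d|<Y}|\LL_3(d)|\ll Y$ (the point that is also used in the proof of Proposition~\ref{proproughcount}), one gets
\begin{equation*}
\#\{K:\ |\Disc(K)|<X,\ \iF(K)=f\}\ =\ \sum_{\substack{|d|<X/f^2\\ {\rm fund.\ disc.}}}a_d(f)
\ \ll_\epsilon\ f^\epsilon\sum_{|d|<X/f^2}\bigl(1+|\LL_3(d)|\bigr)
\ \ll_\epsilon\ \frac{X^{1+\epsilon}}{f^{2}},
\end{equation*}
after shrinking $\epsilon$. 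So the overall strategy is sound and gives a clean alternative to the geometry-of-numbers proof, but your intermediate claim on $a_d(f)$ is false as stated and the Perron-based justification for it does not work; you need the averaged bound on $|\LL_3(d)|$, not a pointwise one.
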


The key result of this section is the following proposition:

\begin{proposition}\label{proproughcount}
Let $\Sigma$ be an finite collection of splitting types, and let $Y$ and $Z$ be positive real numbers. Then
\begin{equation*}
\begin{array}{rcl}
\displaystyle N(\Sigma;Y,Z)&=&
\displaystyle\Bigl(\sum_{\substack{f<Z}} C_1(\Sigma,f)
\Bigr)
\cdot Y
+O_{\epsilon}(Y^{5/6}Z^{2/3}+Y^{2/3+\epsilon}Z^{4/3}  P_\Sigma^{2/3} );\\[.25in]
\displaystyle N(\Sigma;Y,Z)&=&
\displaystyle \Bigl(\sum_{\substack{|d|<Y\\{\rm fund. disc} \neq -3}}
\Res_{s=1}\Phi_{\Sigma,d}(s)
\Bigr)\cdot Z+O_{\epsilon,\Sigma}\Bigl(Y^{7/6 + \epsilon}Z^{2/3+\epsilon}  P_\Sigma^{1/3 + \epsilon} \Bigr).
\end{array}
\end{equation*}
\end{proposition}
\begin{proof}
To prove the first equality, we fiber by $\iF(K)$ and apply Theorem
\ref{thm:dh} (Davenport-Heilbronn), obtaining
\begin{equation*}
\begin{array}{rcl}
\displaystyle N(\Sigma;Y,Z)&=&\displaystyle
\sum_{f<Z} N(\FF(\Sigma)^{(f)};Y)
\\[.2in]&=&\displaystyle
\Bigl(\sum_{f<Z} C_1(\Sigma,f)\Bigr)
\cdot Y+O\Bigl(\sum_{ f< Z}(f^{-1/3}Y^{5/6}+E(Y;f,\Sigma))\Bigr)
\\[.2in]&=&\displaystyle
\Bigl(\sum_{f<Z} C_1(\Sigma,f)\Bigr)
\cdot Y+O\Bigl(Y^{5/6}Z^{2/3}+Y^{2/3+\epsilon}Z^{4/3} P_\Sigma^{2/3} \Bigr),
\end{array}
\end{equation*}
as necessary.
To prove the second equality, we fiber by $\iD(K)$ and
apply Theorem \ref{thm:cmcount} (Cohen-Morra):
\begin{equation*}
\begin{array}{rcl}
\displaystyle N(\Sigma;Y,Z)&=&\displaystyle
\sum_{\substack{|d|<Y\\{\rm fund. disc} \neq -3}} N(\FF(\Sigma)_d;Z)
\\[.2in]&=&\displaystyle
\Bigl(\sum_{\substack{|d|<Y\\{\rm fund. disc} \neq -3}}
\Res_{s=1}\Phi_{\Sigma,d}(s)
\Bigr)\cdot Z+O_{\Sigma,\epsilon}\Bigl(\sum_{\substack{|d|<Y\\{\rm fund. disc} \neq -3}}
|\LL_3(P_{\Sigma},d)||d|^{1/6}Z^{2/3+\epsilon} P_\Sigma^{1/3} \Bigr)
\\[.2in]&=&\displaystyle
\Bigl(\sum_{\substack{|d|<Y\\{\rm fund. disc} \neq -3}}
\Res_{s=1}\Phi_{\Sigma,d}(s)\Bigr)\cdot Z
+O_{\Sigma,\epsilon}(Y^{7/6 + \epsilon}Z^{2/3+\epsilon} P_\Sigma^{1/3 + \epsilon}),
\end{array}
\end{equation*}
where the bound on the sum over $d$ of the sizes of $\LL_3(P_{\Sigma},d)$ follows from Lemma
\ref{unifDH}.
This concludes the proof of the proposition.
\end{proof}

Next, we estimate the leading constant in the right hand side of the
first equation of Proposition \ref{proproughcount}.
\begin{proposition}\label{propconstant}
We have
\begin{equation*}
\displaystyle\sum_{f<Z}
C_1(\Sigma,f)
=\displaystyle\frac12
\Bigl(\sum_{K\in\Sigma_\infty}\frac{1}{|\Aut(K)|}\Bigr)
\prod_{p}\Bigl(\sum_{K\in\Sigma_p}
\frac{|\iD(K)|_p|\iF(K)|_p}{|\Aut(K)|}\Bigr)\Bigl(1-\frac1{p}\Bigr)^2
\cdot Z+O_\epsilon(Z^{3/5+\epsilon});
\end{equation*}
\end{proposition}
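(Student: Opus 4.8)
The plan is to extract the partial sum $\sum_{f<Z}C_1(\Sigma,f)$ from the Dirichlet series $L_1(\Sigma,s)=\sum_f C_1(\Sigma,f)f^{-s}$ of \eqref{eq:constantL} by a truncated Perron formula followed by a contour shift. By Proposition~\ref{prop:L12Euler}, for $\Re(s)>1$ we have
\[
L_1(\Sigma,s)=\tfrac12\Bigl(\sum_{K\in\Sigma_\infty}\tfrac1{|\Aut(K)|}\Bigr)\prod_p L_{1,p}(\Sigma,s),\qquad
L_{1,p}(\Sigma,s):=\Bigl(\sum_{K\in\Sigma_p}\tfrac{|\iD(K)|_p|\iF(K)|_p^s}{|\Aut(K)|}\Bigr)\Bigl(1-\tfrac1p\Bigr).
\]
For a prime $p>3$ with $\Sigma_p=\Sigma_p^{\all}$, the classification in \S2 shows that the unramified, partially ramified, and totally ramified \'etale cubic algebras contribute $1$, $p^{-1}$, and $p^{-s}$ to the inner sum (after weighting by $|\iD(K)|_p|\iF(K)|_p^s/|\Aut(K)|$), so $L_{1,p}(\Sigma,s)=(1+p^{-1}+p^{-s})(1-p^{-1})$ and hence $L_{1,p}(\Sigma,s)(1-p^{-s})=1-p^{-1-s}-p^{-2s}+O(p^{-2-\sigma})$. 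I would therefore write $L_1(\Sigma,s)=\zeta(s)\,H(\Sigma,s)$ with
\[
H(\Sigma,s):=\tfrac12\Bigl(\sum_{K\in\Sigma_\infty}\tfrac1{|\Aut(K)|}\Bigr)\prod_p L_{1,p}(\Sigma,s)\bigl(1-p^{-s}\bigr).
\]
The factors at $p=2$, $p=3$, and at the finitely many primes dividing $P_\Sigma$ are Dirichlet polynomials in $p^{-s}$, hence entire, while the expansion above shows the product over the remaining primes converges absolutely and locally uniformly for $\Re(s)>\tfrac12$. Thus $H(\Sigma,s)$ is holomorphic in $\Re(s)>\tfrac12$, the product $\zeta(s)H(\Sigma,s)$ furnishes the meromorphic continuation of $L_1(\Sigma,s)$ there with a single simple pole at $s=1$, and since $L_{1,p}(\Sigma,1)(1-p^{-1})=\bigl(\sum_{K\in\Sigma_p}\tfrac{|\iD(K)|_p|\iF(K)|_p}{|\Aut(K)|}\bigr)(1-\tfrac1p)^2$, its residue $H(\Sigma,1)$ is exactly the leading constant asserted in the proposition.

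Next I would record the estimates needed for the shift, taking care that they are uniform in $\Sigma$ (the error is claimed independent of $\Sigma$). First, $C_1(\Sigma,f)\ll1$ uniformly: every factor in the definition of $C_1(\Sigma,f)$ is nonnegative and nondecreasing as $\Sigma$ grows, the archimedean factor is $\le\tfrac13$, the factors at $p>3$ are $\le1$ by \eqref{eq:shapeC1}, and the factors at $p=2,3$ are bounded absolutely. Second, for each $\delta>0$, $H(\Sigma,s)\ll_\delta1$ uniformly in $\Sigma$ on $\Re(s)\ge\tfrac12+\delta$: the product over $p>3$ with $\Sigma_p=\Sigma_p^{\all}$ is $\prod_p\bigl(1+O(p^{-1-\sigma}+p^{-2\sigma})\bigr)\ll_\delta1$, the $p=2,3$ factors are absolutely bounded, and — the one point that genuinely needs checking — for each prime $p\ge5$ with $\Sigma_p\subsetneq\Sigma_p^{\all}$ one verifies directly, running over the splitting types, that $|L_{1,p}(\Sigma,s)(1-p^{-s})|\le1$ for $\Re(s)\ge\tfrac12$, so that the product over $p\mid P_\Sigma$ contributes at most $1$. (This is where it matters that we only ask for $\Re(s)>\tfrac12$ rather than a larger abscissa.)

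With these in hand the argument is standard. By a truncated Perron formula at height $T$ on the line $\Re(s)=1+\tfrac1{\log Z}$, using $C_1(\Sigma,f)\ll1$,
\[
\sum_{f<Z}C_1(\Sigma,f)=\frac1{2\pi i}\int_{1+\frac1{\log Z}-iT}^{1+\frac1{\log Z}+iT}L_1(\Sigma,s)\,\frac{Z^s}{s}\,ds+O_\epsilon\Bigl(\frac{Z^{1+\epsilon}}{T}\Bigr).
\]
Shifting the contour to $\Re(s)=\tfrac12+\epsilon$ crosses only the simple pole at $s=1$, whose residue contributes $H(\Sigma,1)\,Z$, the asserted main term. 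On the new vertical line and on the two horizontal segments I would combine $H(\Sigma,s)\ll_\epsilon1$ with the convexity bound $\zeta(\sigma+it)\ll_\epsilon(1+|t|)^{(1-\sigma)/2+\epsilon}$ for $\tfrac12\le\sigma\le1$; this bounds the new contour by $O_\epsilon\bigl(Z^{1/2+\epsilon}T^{1/4+\epsilon}+Z^{1+\epsilon}/T\bigr)$. Taking $T=Z^{2/5}$ balances the two error contributions and produces the total error $O_\epsilon(Z^{3/5+\epsilon})$, as required.

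The main obstacle is not an isolated hard estimate but the bookkeeping that keeps every error term free of $\Sigma$ (and of $P_\Sigma$): the decisive observation is that imposing local splitting conditions never inflates the relevant analytic quantities, because the Euler factors $L_{1,p}(\Sigma,s)(1-p^{-s})$ attached to primes $p\mid P_\Sigma$ have modulus $\le1$ on $\Re(s)\ge\tfrac12$. All the rest — the $\zeta$-factorization, absolute convergence of $H(\Sigma,s)$ for $\Re(s)>\tfrac12$, the identification of the residue with the stated constant, and the Perron/convexity endgame — is routine.
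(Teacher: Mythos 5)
Your proof is correct and follows essentially the same route as the paper: write $L_1(\Sigma,s)=\zeta(s)H(\Sigma,s)$ with $H$ holomorphic and uniformly bounded in $\Re(s)>\tfrac12$, apply truncated Perron, shift to $\Re(s)=\tfrac12+\epsilon$, invoke convexity for $\zeta$, and optimize $T=Z^{2/5}$ to get $O_\epsilon(Z^{3/5+\epsilon})$; the residue $H(\Sigma,1)$ is read off from the Euler product of Proposition~\ref{prop:L12Euler}. The only genuine content you add beyond the paper's one-sentence assertion of uniformity is the explicit check that, for a prime $p\geq 5$ carrying a single prescribed splitting type, the modified Euler factor $L_{1,p}(\Sigma,s)(1-p^{-s})$ has modulus at most $1$ on $\Re(s)\geq\tfrac12$; that observation is what keeps the implied constant free of $P_\Sigma$, and it is indeed the crux of the paper's unexplained claim that $L_1(\Sigma,s)/\zeta(s)$ is bounded uniformly in $\Sigma$.
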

\begin{proof}
Recall the Dirichlet series $L_1(\Sigma, s):=\sum_{f}C_1(\Sigma,f)f^{-s}$ from \S2.1. It is easy to see that
$L_1(\Sigma, s)$ is holomorphic to the right of $\Re(s)>1/2$ with a simple pole
at $s=1$. Indeed, the shape of $C_1(\Sigma,f)$ described in \eqref{eq:shapeC1} implies that $L_1(\Sigma, s)/\zeta(s)$ converges absolutely and is bounded uniformly in $\Sigma$ and $s$
to the right of $\Re(s)=\sigma$ for any $\sigma>1/2$. Pick a real number $T$ to be optimized later. Following the proof of Theorem \ref{thm:cmcount}, we have
\begin{equation}\label{eqCavg1}
\begin{array}{rcl}
  \displaystyle\sum_{f<Z}
  C_1(\Sigma,f)&=&
\displaystyle\int_{\Re(s)=2}L_1(\Sigma, s)(s)\frac{Z^s}{s} ds
\\[.2in]&=&\displaystyle
\Res_{s=1}L_1(\Sigma, s)\cdot Z+ O_\epsilon\Bigl(\frac{Z^{1+\epsilon}}{T}+
Z^{1/2+\epsilon}T^{1/4} \Bigr),
\end{array}
\end{equation}
where we use the convex bound to estimate the growth of $\zeta(s)$ (and therefore $L_1(\Sigma, s)$) on the line $\Re(s)=1/2+\epsilon$.
From the Euler product expansion of $L_1(\Sigma, s)$ derived in Proposition \ref{prop:L12Euler}, it follows that the residue of $L_1(\Sigma, s)$ at $s=1$ is given by
\begin{equation}\label{eqLres}
\Res_{s=1}L_1(\Sigma, s)=\frac12\Bigl(\sum_{K\in\Sigma_\infty}\frac{1}{|\Aut(K)|}\Bigr)\prod_{p}\Bigl(\sum_{K\in\Sigma_p}
\frac{|\iD(K)|_p|\iF(K)|_p}{|\Aut(K)|}\Bigr)\Bigl(1-\frac1{p}\Bigr)^2.
\end{equation}
The proposition follows by choosing $T=Z^{2/5}$.
\end{proof}

The above two propositions have the following consequence:
\begin{corollary}\label{corconstant}
We have
\begin{equation*}
\frac{1}{Y}\sum_{\substack{|d|<Y\\{\rm fund. disc} \neq -3}}
\Res_{s=1}\Phi_{\Sigma,d}(s)=
\frac12\prod_{p}\Bigl(\sum_{K\in\Sigma_p}
\frac{|\iD(K)|_p|\iF(K)|_p}{|\Aut(K)|}\Bigr)\Bigl(1-\frac1{p}\Bigr)^2+O_{\epsilon,\Sigma}(Y^{-1/12 + \epsilon}).
\end{equation*}
\end{corollary}
\begin{proof}
The result follows from Propositions \ref{proproughcount} and
\ref{propconstant} by setting $Z=Y^{3/4}$.
\end{proof}

In particular, the two estimates of Proposition \ref{proproughcount} are asymptotic formulas 
for $Y > Z^{1 + \epsilon}$ and $Y < Z^{2 - \epsilon}$, respectively. Since these ranges overlap, we obtain 
the following result.
\begin{theorem}\label{thdyadicmain}
We have
\begin{equation*}
N(\Sigma;Y,Z)=
\frac12\Bigl(\sum_{K\in\Sigma_\infty}\frac{1}{|\Aut(K)|}\Bigr)
\prod_{p}\Bigl(\sum_{K\in\Sigma_p}
\frac{|\iD(K)|_p|\iF(K)|_p}{|\Aut(K)|}\Bigr)\Bigl(1-\frac1{p}\Bigr)^2
  \cdot YZ+o_\Sigma(Y)Z+Yo_\Sigma(Z).
\end{equation*}
\end{theorem}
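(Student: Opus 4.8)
The plan is to glue together the two complementary estimates of Proposition~\ref{proproughcount}, using the first where $Y$ dominates $Z$ and the second where $Z$ dominates $Y$. Write $\mathfrak{c}_\Sigma$ for the leading constant
\[
\mathfrak{c}_\Sigma:=\frac12\Bigl(\sum_{K\in\Sigma_\infty}\frac{1}{|\Aut(K)|}\Bigr)\prod_{p}\Bigl(\sum_{K\in\Sigma_p}\frac{|\iD(K)|_p|\iF(K)|_p}{|\Aut(K)|}\Bigr)\Bigl(1-\frac1{p}\Bigr)^2
\]
appearing in the statement. I would show that the first estimate of Proposition~\ref{proproughcount}, fed through Proposition~\ref{propconstant}, gives $N(\Sigma;Y,Z)=\mathfrak{c}_\Sigma YZ+o_\Sigma(Y)Z+Yo_\Sigma(Z)$ whenever $Y\geq Z^{3/2}$, and that the second estimate, fed through Corollary~\ref{corconstant}, gives the same conclusion whenever $Y\leq Z^{3/2}$. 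Since every pair $(Y,Z)$ meets one of these two conditions, the theorem follows at once; any fixed exponent strictly between $1$ and $2$ in place of $3/2$ would do just as well.

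For the range $Y\geq Z^{3/2}$ I would insert the conclusion of Proposition~\ref{propconstant} into the first display of Proposition~\ref{proproughcount}, obtaining
\[
N(\Sigma;Y,Z)=\mathfrak{c}_\Sigma\,YZ+O_\epsilon\bigl(YZ^{3/5+\epsilon}\bigr)+O_\epsilon\bigl(Y^{5/6}Z^{2/3}\bigr)+O_\epsilon\bigl(Y^{2/3+\epsilon}Z^{4/3}P_\Sigma^{2/3}\bigr).
\]
The term $O_\epsilon(YZ^{3/5+\epsilon})$ is of the form $Y\,o(Z)$, and $O_\epsilon(Y^{5/6}Z^{2/3})$ of the form $o(Y)\,Z$, with no restriction needed. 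For the last term, $Y\geq Z^{3/2}$ gives $Z^{1/3}\leq Y^{2/9}$, whence $Y^{2/3+\epsilon}Z^{4/3}P_\Sigma^{2/3}\leq Y^{8/9+\epsilon}P_\Sigma^{2/3}\cdot Z$, which is $o_\Sigma(Y)\,Z$ provided $\epsilon<1/9$. This handles this range.

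For the range $Y\leq Z^{3/2}$ I would use Corollary~\ref{corconstant}, which identifies $\sum_{|d|<Y}\Res_{s=1}\Phi_{\Sigma,d}(s)=\mathfrak{c}_\Sigma\,Y+O(Y^{11/12+\epsilon})$, and substitute into the second display of Proposition~\ref{proproughcount}:
\[
N(\Sigma;Y,Z)=\mathfrak{c}_\Sigma\,YZ+O\bigl(Y^{11/12+\epsilon}Z\bigr)+O_{\epsilon,\Sigma}\bigl(Y^{7/6}Z^{2/3+\epsilon}P_\Sigma^{1/3}\bigr).
\]
Here $O(Y^{11/12+\epsilon}Z)$ is of the form $o(Y)\,Z$, while $Y\leq Z^{3/2}$ gives $Y^{1/6}\leq Z^{1/4}$, so $Y^{7/6}Z^{2/3+\epsilon}P_\Sigma^{1/3}\leq Y\cdot Z^{11/12+\epsilon}P_\Sigma^{1/3}$, which is $Y\,o_\Sigma(Z)$ provided $\epsilon<1/12$. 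This handles the remaining range and completes the argument.

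I do not expect a genuine obstacle here: once Propositions~\ref{proproughcount} and~\ref{propconstant} and Corollary~\ref{corconstant} are available, the proof is just a comparison of error exponents on two ranges whose union is everything. The only conceptual point---already settled by the results being cited---is that the main terms of the two estimates in Proposition~\ref{proproughcount} are a priori unrelated: the second, $\sum_{|d|<Y}\Res_{s=1}\Phi_{\Sigma,d}(s)\cdot Z$, is built from residues of Dirichlet series $\Phi_{\Sigma,d}(s)$ that are not explicit, and it is exactly Corollary~\ref{corconstant}---itself proved by running Proposition~\ref{proproughcount} at the single specialization $Z=Y^{3/4}$---that pins this residue sum to the Euler product $\mathfrak{c}_\Sigma\,Y$. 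Theorem~\ref{thdyadicmain} then simply repackages the two estimates as a single asymptotic uniform in both $Y$ and $Z$.
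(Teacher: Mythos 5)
Your argument is correct and is essentially the proof given in the paper: the paper likewise glues the two estimates of Proposition~\ref{proproughcount} (identified with a common main term via Proposition~\ref{propconstant} and Corollary~\ref{corconstant}), and simply writes the case-split compactly as a single error term containing $\min(Y^{2/3}Z^{4/3},\,Y^{7/6}Z^{2/3})$ rather than splitting explicitly at $Y=Z^{3/2}$.
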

\begin{proof}
Combining the above results yields the claimed result with an error term 
\begin{equation*}
\ll_{\epsilon,\Sigma}(YZ)^\epsilon\big(YZ^{3/5}+Y^{11/12}Z+\min(Y^{2/3}Z^{4/3},Y^{7/6}Z^{2/3})\big),
\end{equation*}
which is sufficiently small.
\end{proof}

\section{Ordering cubic fields by generalized discriminants}\label{sec:ordering}
In this section we determine asymptotics for the number of cubic
fields with bounded generalized discriminant, thereby proving Theorem
\ref{thGD}. We also then determine which generalized discriminants $I$
are such that the family of cubic fields ordered by $I$ satisfy
independence of primes, thus also proving Theorem \ref{thIndPr}.

For a generalized discriminant $I = |\iD|^\alpha \iF^\beta$, after normalizing we may assume 
that one of $\alpha$ or $\beta$ equals $1$ and the other is $\geq 1$. We handle each of the three possible cases
in turn.


\begin{proposition}\label{propMT1}
For a finite collection $\Sigma$ of cubic splitting types and a
real number $\beta>1$, we have
\begin{equation*}
N_{|\iD|F^{\beta}}(\Sigma;X)=
L_1(\Sigma, \beta)\cdot X+L_2(\Sigma, 5\beta/6)X^{5/6}+O_{\epsilon,\beta}\big( (X^{\frac{2}{\beta+1} + \epsilon}+  X^{\frac{2}{3}+\epsilon})
P_\Sigma^{2/3}\big),
\end{equation*}
with $L_1(\Sigma, \beta)$ and $L_2(\Sigma, 5\beta/6)$ as given in Proposition \ref{prop:L12Euler}.
\end{proposition}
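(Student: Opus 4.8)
The plan is to fiber the count by the value of $\iF(K)=f$. Since $|\Delta_{\iF;\beta}(K)| = |\iD(K)|\cdot f^\beta$, a field $K$ with $\iF(K) = f$ contributes to $N_{\iF;\beta}(\FF(\Sigma);X)$ precisely when $|\iD(K)| < X/f^\beta$. Therefore
\begin{equation*}
N_{\iF;\beta}(\FF(\Sigma);X) = \sum_{f \leq X^{1/\beta}} N(\FF(\Sigma)^{(f)};X/f^\beta),
\end{equation*}
where the sum is over $f$ squarefree away from $3$ and indivisible by $27$ (as these are the only possible values of $\iF$), and the upper limit $X^{1/\beta}$ comes from the requirement $X/f^\beta > 1$. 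Now I would apply Theorem \ref{thm:dh} to each summand, giving
\begin{equation*}
N(\FF(\Sigma)^{(f)};X/f^\beta) = C_1(\Sigma,f)\,\frac{X}{f^\beta} + C_2(\Sigma,f)\Bigl(\frac{X}{f^\beta}\Bigr)^{5/6} + O\bigl(E(X/f^\beta;f,\Sigma)\bigr).
\end{equation*}
Summing the first term over all $f \geq 1$ and using $\sum_f C_1(\Sigma,f) f^{-\beta} = L_1(\Sigma,\beta)$ from Proposition \ref{prop:L12Euler} produces the main term $L_1(\Sigma,\beta)\cdot X$; the tail $f > X^{1/\beta}$ is bounded using Lemma \ref{unifDH}, which gives $\sum_{f > X^{1/\beta}} |C_1(\Sigma,f)| X f^{-\beta} \ll X^{1/\beta+\epsilon}$ by partial summation (here I use that $C_1(\Sigma,f) \ll_\epsilon f^\epsilon$, or more directly bound the tail by the uniformity lemma). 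Similarly, summing the secondary term gives $\sum_f C_2(\Sigma,f) f^{-5\beta/6} \cdot X^{5/6} = L_2(\Sigma,5\beta/6)\cdot X^{5/6}$, again with an acceptable tail since $C_2(\Sigma,f) \ll f^{-1/3}f^\epsilon$ forces convergence of $\sum_f C_2(\Sigma,f) f^{-5\beta/6}$ for $\beta > 1$ (one checks $5\beta/6 + 1/3 > 1$, equivalently $\beta > 4/5$, which holds).

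The error term is where the work concentrates. The contribution of the error terms is $\sum_{f \leq X^{1/\beta}} E(X/f^\beta; f, \Sigma)$, and I would estimate it by dyadic decomposition: partition $f \in (F, 2F]$ for $F$ running over powers of $2$ up to $X^{1/\beta}$. On each such range, every $f$ satisfies $X/f^\beta \leq X/F^\beta =: Y_F$, so setting $Y_f := X/f^\beta \leq Y_F$ and invoking the averaged bound in Theorem \ref{thm:dh} with $F$ replaced by $2F$ gives
\begin{equation*}
\sum_{F < f \leq 2F} E(X/f^\beta; f, \Sigma) \ll_\epsilon Y_F^{2/3+\epsilon} F^{4/3} P_\Sigma^{2/3} = \bigl(X/F^\beta\bigr)^{2/3+\epsilon} F^{4/3} P_\Sigma^{2/3} \ll_\epsilon X^{2/3+\epsilon} F^{4/3 - 2\beta/3} P_\Sigma^{2/3}.
\end{equation*}
Since $\beta > 1$ we have $4/3 - 2\beta/3 < 2/3$; I would split into two subcases according to the sign of the exponent $4/3 - 2\beta/3$. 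When $\beta \geq 2$ the exponent is $\leq 0$, so the sum over dyadic $F$ is dominated by the smallest $F$ (i.e.\ $F \asymp 1$), giving $O_\epsilon(X^{2/3+\epsilon} P_\Sigma^{2/3})$. When $1 < \beta < 2$ the exponent is positive, so the sum is dominated by the largest $F \asymp X^{1/\beta}$, giving $O_\epsilon(X^{2/3+\epsilon} \cdot X^{(4/3 - 2\beta/3)/\beta} P_\Sigma^{2/3}) = O_\epsilon(X^{2/(\beta+1)+\epsilon}P_\Sigma^{2/3})$ after simplifying the exponent $2/3 + (4/3 - 2\beta/3)/\beta = 2/(\beta+1)$ — this is the arithmetic identity that needs to be verified, and it is exactly why the exponent $2/(\beta+1)$ appears in the statement. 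Taking the max of the two subcases yields the claimed error $O\bigl((X^{2/(\beta+1)} + X^{2/3+\epsilon})P_\Sigma^{2/3}\bigr)$.

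The main obstacle is the bookkeeping in the dyadic error-term estimate: one must correctly feed the flexibility of Theorem \ref{thm:dh} (which allows \emph{independently chosen} $Y_f \leq Y$ on each dyadic block) into the calculation, and then carefully optimize over the dyadic parameter $F$, tracking the crossover at $\beta = 2$ and verifying the exponent identity $2/3 + (4/3 - 2\beta/3)/\beta = 2/(\beta+1)$. A secondary point requiring care is confirming that the two tail sums (from truncating at $f \leq X^{1/\beta}$) are absorbed into the stated error; for the $L_1$ tail this uses Lemma \ref{unifDH}, and for the $L_2$ tail it uses the decay $C_2(\Sigma,f) \ll f^{-1/3+\epsilon}$ together with $\beta > 4/5$. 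Neither tail is larger than $X^{1/\beta+\epsilon} \ll X^{2/(\beta+1)+\epsilon}$ (valid since $\beta > 1$), so they are harmless.
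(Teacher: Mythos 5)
Your framework—fibering by $\iF(K)=f$, applying Theorem \ref{thm:dh} with the averaged error bound over dyadic blocks, and extending the resulting partial sums to the full Dirichlet series $L_1(\Sigma,\beta)$ and $L_2(\Sigma,5\beta/6)$—matches the paper. But there is a concrete error in the error-term optimization that causes the argument to fall short of the claimed bound when $1 < \beta < 2$.

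You truncate the sum over $f$ at the natural cutoff $f \leq X^{1/\beta}$ (the point at which $X/f^\beta$ drops below $1$). With that choice, in the range $1<\beta<2$ the dyadic decomposition gives a contribution dominated by $F \asymp X^{1/\beta}$, of size $X^{2/3+\epsilon}\cdot X^{(4/3-2\beta/3)/\beta}P_\Sigma^{2/3}$. You then assert the identity $\tfrac{2}{3}+\tfrac{1}{\beta}\bigl(\tfrac{4}{3}-\tfrac{2\beta}{3}\bigr)=\tfrac{2}{\beta+1}$, but in fact
\[
\frac{2}{3}+\frac{1}{\beta}\Bigl(\frac{4}{3}-\frac{2\beta}{3}\Bigr)=\frac{4}{3\beta},
\]
and $\frac{4}{3\beta} > \frac{2}{\beta+1}$ for all $1<\beta<2$ (equality only at $\beta=2$). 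So the error you actually obtain in this range, $O(X^{4/(3\beta)+\epsilon}P_\Sigma^{2/3})$, is \emph{worse} than the stated $O(X^{2/(\beta+1)}P_\Sigma^{2/3})$, and the proof as written does not yield Proposition \ref{propMT1}.

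The missing idea is that the truncation point should \emph{not} be the natural cutoff $X^{1/\beta}$; it must be chosen to balance two competing error contributions. The paper truncates at $Y := X^{1/(\beta+1)}$, and for $f>Y$ bounds the count directly via Lemma \ref{unifDH}: each such field satisfies $|\Disc(K)| < f^{2-\beta}X$, so the tail is $\ll \sum_{f>Y} f^{2-\beta}X^{1+\epsilon}/f^2 \ll X^{1+\epsilon}/Y^{\beta-1} = X^{2/(\beta+1)+\epsilon}$. With this smaller $Y$, the dyadic error becomes $X^{2/3+\epsilon}\max(Y^{4/3-2\beta/3},1)P_\Sigma^{2/3}$, and for $1<\beta<2$ one has $\tfrac{2}{3}+\tfrac{1}{\beta+1}\bigl(\tfrac{4}{3}-\tfrac{2\beta}{3}\bigr)=\tfrac{2}{\beta+1}$, which \emph{is} a true identity and is what produces the exponent in the statement. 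The resulting tail terms from extending the sums of $C_1(\Sigma,f)f^{-\beta}$ and $C_2(\Sigma,f)f^{-5\beta/6}$ past $Y$ are $O(XY^{1-\beta})$ and $O(X^{5/6}Y^{1-5\beta/6})$, both $\ll X^{2/(\beta+1)+\epsilon}$. So you need to introduce a free parameter $Y$, bound the genuine field-count tail $f>Y$ via Lemma \ref{unifDH}, and optimize; the bound $C_1(\Sigma,f)\ll f^\epsilon$ by itself cannot replace Lemma \ref{unifDH} here, because it is the field-count tail, not just the Dirichlet-series tail, that is being balanced against the dyadic error.
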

\begin{proof}
We fiber over $f\geq 1$ and write
\begin{equation*}
  N_{|\iD|F^{\beta}}(\Sigma,X)=
  \sum_{f}
  N(\FF(\Sigma)^{(f)};f^{2-\beta}X) + O(X^{\frac{1}{\beta}} \log(X)),
\end{equation*}
where
$N(\FF(\Sigma)^{(f)};f^{2-\beta}X)$ denotes the
number of cubic fields $K\in\FF(\Sigma)^{(f)}$ such that $|\Disc(K)|<f^{2-\beta}X$,
and the error term accounts for the pure cubic fields.
For any $1<Y \ll X^{1/\beta}$, by Lemma \ref{unifDH} we have
\begin{equation*}
\displaystyle\sum_{f \geq Y}
N(\FF(\Sigma)^{(f)};f^{2-\beta}X)
\ll_\epsilon \displaystyle\sum_{f \geq Y} \frac{f^{2-\beta+\epsilon}X^{1+\epsilon}}{f^2}
\ll_\epsilon \
\frac{X^{1+\epsilon}}{Y^{\beta-1-\epsilon}}.
\end{equation*}
For $f < Y$, by Theorem \ref{thm:dh} we have
\begin{equation*}
\displaystyle\sum_{f < Y}
N(\FF(\Sigma)^{(f)};f^{2-\beta}X)
=\displaystyle
\displaystyle\sum_{f < Y} \Bigl(\frac{C_1(\Sigma,f)}{f^{\beta}}X
+\frac{C_2(\Sigma,f)}{f^{5\beta/6}}X^{5/6}+O\bigl(E(X/f^\beta,f,\Sigma)\bigr)\Bigr).
\end{equation*}
The error term is bounded by
\begin{align*}
\sum_{k<\log_2 Y}\sum_{2^k\leq f< 2^{k+1}} E(X/f^\beta;f,\Sigma)
\ &\ll
\sum_{k<\log_2 Y} X^{2/3+\epsilon} (2^k)^{4/3-2\beta/3 + \epsilon}P_\Sigma^{2/3}
\\
&\ll 
\displaystyle
X^{2/3+\epsilon}P_\Sigma^{2/3} \cdot \max(Y^{4/3-2\beta/3 + \epsilon}, 1).
\end{align*}
Meanwhile, the two main terms are:
\begin{equation*}
\begin{array}{rcl}
\displaystyle\sum_{f < Y} \frac{C_1(\Sigma,f)}{f^{\beta}}
&=&\displaystyle\sum_{f\geq 1} \frac{C_1(\Sigma,f)}{f^{\beta}}+O_\beta(Y^{1-\beta})
\\[.2in]&=&\displaystyle
L_1(\Sigma, \beta)+O_\beta(Y^{1-\beta});
\\[.05in]\displaystyle
\sum_{f< Y}\frac{C_2(\Sigma,f)}{f^{5\beta/6}}
&=&\displaystyle
\sum_{f\geq 1}\frac{C_2(\Sigma,f)}{f^{5\beta/6}}+O_\beta(Y^{1-5\beta/6})
\\[.2in]&=&\displaystyle
L_2(\Sigma, 5\beta/6)+O_\beta(Y^{1-5\beta/6}).
\end{array}
\end{equation*}
Optimizing (in $X$ aspect), we
pick $Y=X^{\frac1{\beta+1}}$ and obtain the result.
\end{proof}

\begin{proposition}\label{propMT2}
For a finite collection $\Sigma$ of cubic splitting types and a
real number $\alpha>1$, we have
\begin{equation*}
N_{|\iD|^\alpha \iF}(\Sigma; X)=
\Bigl(\sum_{d\,{\rm fund.}\,{\rm disc} \neq -3}\frac{\Res_{s=1}\Phi_{\Sigma,d}(s)}{|d|^\alpha}
\Bigr)\cdot X+O_{\epsilon,\alpha}((X^{3/(2\alpha+1)+\epsilon}+X^{2/3+\epsilon})P_\Sigma^{1/3}).
\end{equation*}
\end{proposition}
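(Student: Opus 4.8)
The plan is to fiber over the quadratic resolvent, writing $d = \iD(K)$. Since $\iF(K) \ge 1$, every $K$ with $|\Delta_{\iD;\alpha}(K)| = |\iD(K)|^\alpha\iF(K) < X$ satisfies $|d| \le X^{1/\alpha}$, so
\[
N_{\iD;\alpha}(\FF(\Sigma);X) = \sum_{\substack{|d| \le X^{1/\alpha} \\ d\ \mathrm{fund.\ disc.}}} N\big(\FF(\Sigma)_d;\, X/|d|^\alpha\big).
\]
I would split this at a cutoff $|d| = Y$, which will be taken to be $Y = X^{2/(2\alpha+1)}$ (note $1 \le Y \le X^{1/\alpha}$). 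On the range $|d| \le Y$ I would apply Theorem \ref{thm:cmcount} to each term, obtaining $\Res_{s=1}\Phi_{\Sigma,d}(s)\cdot X/|d|^\alpha$ (the residues of $\phi^{(0)}_{\Sigma,d}(s)$ and of $\Phi_{\Sigma,d}(s)$ coincide, since the auxiliary $L$-function summands in \eqref{eqn_main_cubic_lc} are holomorphic at $s=1$) plus an error $O_\epsilon\big(|\LL_3(\Sigma,d)|\,|d|^{1/6} P_\Sigma^{1/3}(X/|d|^\alpha)^{2/3+\epsilon}\big)$. On the range $Y < |d| \le X^{1/\alpha}$, Theorem \ref{thm:cmcount} is too lossy: its secondary/error term is of size $\asymp (X/|d|^\alpha)^{2/3}$, and summed over $d$ up to $X^{1/\alpha}$ this would produce a contribution of order $X^{7/(6\alpha)}$, which exceeds the target error when $\alpha$ is close to $1$. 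Instead I would use the Davenport--Heilbronn uniformity estimate of Lemma \ref{unifDH}.

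For the main term, summing the principal parts over $|d| \le Y$ gives $X\sum_{|d|\le Y}\Res_{s=1}\Phi_{\Sigma,d}(s)/|d|^\alpha$. The explicit formula \eqref{eqn_main_cubic_lc} yields the $\Sigma$-uniform bound $\Res_{s=1}\Phi_{\Sigma,d}(s) \ll_\epsilon |d|^\epsilon$ (the relevant Euler product factors as $\zeta(s)L(s,\chi_{-3d})$ times a product absolutely convergent for $\Re(s) > \tfrac12$, and $L(1,\chi_{-3d}) \ll_\epsilon |d|^\epsilon$), so, since $\alpha > 1$, the series $\sum_d \Res_{s=1}\Phi_{\Sigma,d}(s)/|d|^\alpha$ converges absolutely — this is the claimed leading constant — with tail $\sum_{|d| > Y}\Res_{s=1}\Phi_{\Sigma,d}(s)/|d|^\alpha \ll_\epsilon Y^{1-\alpha+\epsilon}$. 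Thus the principal parts contribute the leading constant times $X$ up to an error $O_\epsilon(XY^{1-\alpha+\epsilon})$. The Theorem \ref{thm:cmcount} error terms sum to $\ll P_\Sigma^{1/3}X^{2/3+\epsilon}\sum_{|d|\le Y}|\LL_3(\Sigma,d)|\,|d|^{1/6-2\alpha/3-\epsilon}$, and using the Davenport--Heilbronn bound $\sum_{|d|\le T}|\LL_3(\Sigma,d)| \ll_\epsilon T^{1+\epsilon}$ (the auxiliary fields have discriminant $\ll P_\Sigma^2|d|$, accounted for by the $P_\Sigma$-powers) this is $\ll_\epsilon P_\Sigma^{1/3}X^{2/3+\epsilon}\max(Y^{7/6-2\alpha/3},1)$. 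For the large-$d$ range I would fiber further by $\iF(K) = f$: a field counted there has $f < X/Y^\alpha$ and, since $|d| < (X/f)^{1/\alpha}$, discriminant $|\Disc(K)| = |d|f^2 < X^{1/\alpha}f^{2-1/\alpha}$, so Lemma \ref{unifDH} gives $\ll_\epsilon X^{1/\alpha+\epsilon}f^{-1/\alpha+\epsilon}$ of them for each $f$; summing over $1 \le f < X/Y^\alpha$ and using $1/\alpha < 1$ bounds this range by $\ll_\epsilon X^{1+\epsilon}Y^{1-\alpha}$, uniformly in $\Sigma$.

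Substituting $Y = X^{2/(2\alpha+1)}$ into the three error contributions $XY^{1-\alpha}$, $P_\Sigma^{1/3}X^{2/3+\epsilon}\max(Y^{7/6-2\alpha/3},1)$, and $X^{1+\epsilon}Y^{1-\alpha}$, a short computation turns each into $\ll_\epsilon P_\Sigma^{1/3}(X^{3/(2\alpha+1)+\epsilon}+X^{2/3+\epsilon})$, the first term dominating for $\alpha < 7/4$ and the second for $\alpha > 7/4$; this is the stated error. I expect the main obstacle to be precisely this division of labor in the large-$d$ regime — recognizing that the precise Cohen--Morra formula is counterproductive there and that only the cruder Davenport--Heilbronn upper bound will do — together with confining all $\Sigma$-dependence to the single factor $P_\Sigma^{1/3}$, which requires tracking the $P_\Sigma$-dependence both in the size of $\LL_3(\Sigma,d)$ and in the conductors of the degree-$2$ Artin $L$-functions underlying Theorem \ref{thm:cmcount}.
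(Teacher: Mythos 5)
Your proposal is correct and follows essentially the same path as the paper's proof: fiber over the fundamental discriminant $d$, split at $Y = X^{2/(2\alpha+1)}$, apply Theorem \ref{thm:cmcount} for $|d| \le Y$, and use Lemma \ref{unifDH} (after fibering by $\iF$) for $|d| > Y$. You are somewhat more explicit than the paper about two minor points — the observation that $\Res_{s=1}\phi^{(0)}_{\Sigma,d} = \Res_{s=1}\Phi_{\Sigma,d}$, and the tail bound $\sum_{|d|>Y}\Res_{s=1}\Phi_{\Sigma,d}(s)/|d|^\alpha \ll_\epsilon Y^{1-\alpha+\epsilon}$ via $\Res_{s=1}\Phi_{\Sigma,d}(s) \ll_\epsilon |d|^\epsilon$ — both of which the paper leaves implicit, but these are straightforward and consistent with the argument given there.
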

\begin{proof}
We fiber over $d$ and write
\begin{equation*}
N_{|\iD|^\alpha \iF}(\Sigma, X)=
\sum_{d\,{\rm fund.}\,{\rm disc} \neq -3}N(\FF(\Sigma)_d;X/|d|^\alpha),
\end{equation*}
Pick a real number $1<Y\ll X^{1/\alpha}$ to be optimized later. For each fundamental discriminant $d$ such that $|d|\geq Y$, the condition
$|d|^\alpha f < X$ implies that $f < X/Y^\alpha$. Hence by Lemma \ref{unifDH} we have
\begin{equation*}
\begin{array}{rcl}
\displaystyle    
\sum_{\substack{d\,{\rm fund.}\,{\rm disc} \neq -3\\|d| \geq Y}}
N(\FF(\Sigma)_d;X/|d|^\alpha)
&\leq&\displaystyle
\sum_{f < X/Y^\alpha}\#\{K\in\FF(\Sigma)^{(f)}:|\Disc(K)| < X^{1/\alpha}f^{2-1/\alpha}
\}
\\[.2in]&\ll&\displaystyle
\sum_{f < X/Y^\alpha} X^{\epsilon} \cdot (X/f)^{1/\alpha}
\\[.2in]&\ll&\displaystyle
\frac{X^{1 + \epsilon}}{Y^{\alpha-1}}.
\end{array}
\end{equation*}

To estimate the main term, we use Theorem \ref{thm:cmcount} to write
\begin{equation*}
\sum_{\substack{d\,{\rm fund.}\,{\rm disc} \neq -3\\|d|<Y}}N(\FF(\Sigma)_d;X/|d|^\alpha)
=
\Bigl(\sum_{\substack{d\,{\rm fund.}\,{\rm disc} \neq -3\\|d|<Y}}
\frac{\Res_{s=1}\Phi_{\Sigma,d}(s)}{|d|^\alpha}\Bigr)\cdot X
+O(E),
\end{equation*}
where the error term $E$ is easily bounded by breaking up the sum over
$d$ into dyadic ranges and using Lemma \ref{unifDH} to
estimate the size of $\LL_3(P_{\Sigma},d)$:
\begin{equation*}
\begin{array}{rcl}
E&\ll&
\displaystyle \sum_{\substack{d\,{\rm fund.}\,{\rm disc} \neq -3\\|d|<Y}}
|\LL_3(P_{\Sigma},d)| \cdot |d|^{1/6}P_\Sigma^{1/3}(X/|d|^\alpha)^{2/3+\epsilon}
\\[.2in]&\ll&
X^{2/3+\epsilon}\max(Y^{7/6-2\alpha/3},1)P_\Sigma^{1/3}.
\end{array}
\end{equation*}
Optimizing, we pick $Y=X^{2/(2\alpha+1)}$ and obtain the required result.
\end{proof}

\begin{theorem}\label{thCond}
We have
\begin{equation}\label{eqMT}
N_{|\iD|\iF}(\Sigma;X)=
\frac12\Bigl(\sum_{K\in\Sigma_\infty}\frac{1}{|\Aut(K)|}\Bigr)
\prod_{p}\Bigl(\sum_{K\in\Sigma_p}
\frac{|\iD(K)|_p |\iF(K)|_p}{|\Aut(K)|}\Bigr)\Bigl(1-\frac1{p}\Bigr)^2
\cdot X\log X+o(X \log X).
\end{equation}
\end{theorem}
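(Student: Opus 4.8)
The plan is to split the count according to the size of $\iF(K)$, at a threshold $T=X^{\theta}$ with $\theta$ a fixed exponent in the interval $(1/3,1/2)$ (this interval being nonempty is the crux), estimating the small-$\iF$ part by the strengthened Davenport--Heilbronn theorem (Theorem \ref{thm:dh}) and the large-$\iF$ part, in which $\iD$ is then perforce small, by the Cohen--Morra--type formula (Theorem \ref{thm:cmcount}). Two preliminary reductions: first, a cyclic cubic field $K$ has $\iD(K)=1$, so that $|\Delta_{\iD\iF}(K)|=\iF(K)$ is its conductor, and the number of cyclic cubic fields in $\FF(\Sigma)$ of conductor below $X$ is $O_\Sigma(X)=o(X\log X)$; we therefore discard the cyclic fields and count only $S_3$-fields. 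Second, write $N_{\iD\iF}(\FF(\Sigma);X)=A+B$, where $A$ (resp.\ $B$) counts the non-cyclic $K\in\FF(\Sigma)$ with $|\iD(K)|\iF(K)<X$ and $\iF(K)\le T$ (resp.\ $\iF(K)>T$).

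For $A$ we fiber over $f=\iF(K)\le T$; since $|\iD(K)|<X/f$ is then forced, $A=\sum_{f\le T}N(\FF(\Sigma)^{(f)};X/f)$, and we apply Theorem \ref{thm:dh} termwise. The leading term contributes $X\sum_{f\le T}C_1(\Sigma,f)/f$, which by Proposition \ref{propconstant} (the uniform estimate $\sum_{f<Z}C_1(\Sigma,f)=\Res_{s=1}L_1(\Sigma,s)\cdot Z+O_\epsilon(Z^{3/5+\epsilon})$) and partial summation equals $(\Res_{s=1}L_1(\Sigma,s))\cdot X\log T+O_\Sigma(X)$; by \eqref{eqLres} this residue is exactly the constant $C_1(\iD\iF;\Sigma)$ on the right of \eqref{eqMT}. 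The secondary Davenport--Heilbronn term contributes $O_\epsilon(X^{5/6+\epsilon})$, since $C_2(\Sigma,f)\ll_\epsilon f^{-1/3+\epsilon}$ so that $\sum_f C_2(\Sigma,f)f^{-5/6}$ converges, and the error, summed dyadically via the averaged bound of Theorem \ref{thm:dh}, is $\ll X^{2/3+\epsilon}T^{2/3}P_\Sigma^{2/3}=X^{2/3+2\theta/3+\epsilon}P_\Sigma^{2/3}$. Hence $A=C_1(\iD\iF;\Sigma)\cdot X\log T+O_\Sigma(X)+O(X^{2/3+2\theta/3+\epsilon}P_\Sigma^{2/3})$.

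For $B$ we fiber over $d=\iD(K)$ instead; the constraints $\iF(K)>T$ and $|\iD(K)|\iF(K)<X$ force $|d|<X/T$, so $B=\sum_{|d|<X/T}\big(N(\FF(\Sigma)_d;X/|d|)-N(\FF(\Sigma)_d;T)\big)$, and we apply Theorem \ref{thm:cmcount}. Put $r_d:=\Res_{s=1}\Phi_{\Sigma,d}(s)$; since the $\LL_3$-summand in \eqref{eqn_main_cubic_lc} is holomorphic at $s=1$, we have $r_d=\Res_{s=1}\phi^{(0)}_{\Sigma,d}(s)$, so the leading term of $B$ is $\sum_{|d|<X/T}r_d(X/|d|-T)$. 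By Corollary \ref{corconstant} and partial summation this equals $c_\ast\cdot X\log(X/T)+O_\Sigma(X)$, where $c_\ast$ denotes the constant on the right of Corollary \ref{corconstant}; the error, bounded by breaking the $d$-sum into dyadic blocks and invoking the Davenport--Heilbronn bound $\sum_{|d|<W}|\LL_3(\Sigma,d)|\ll_\Sigma W^{1+\epsilon}$, is $\ll_\Sigma X^{2/3+\epsilon}(X/T)^{1/2+\epsilon}=X^{7/6-\theta/2+\epsilon}$. Hence $B=c_\ast\cdot X\log(X/T)+O_\Sigma(X)+O_\Sigma(X^{7/6-\theta/2+\epsilon})$.

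Adding, for every fixed $\theta\in(1/3,1/2)$ both error exponents $2/3+2\theta/3$ and $7/6-\theta/2$ are strictly less than $1$, so $N_{\iD\iF}(\FF(\Sigma);X)=\big(\theta\,C_1(\iD\iF;\Sigma)+(1-\theta)c_\ast\big)X\log X+O_\Sigma(X)$. Since the left-hand side does not depend on $\theta$, applying this identity with two distinct values of $\theta$ and subtracting forces $c_\ast=C_1(\iD\iF;\Sigma)$; the coefficient of $X\log X$ then collapses to $C_1(\iD\iF;\Sigma)$, and \eqref{eqMT} follows --- in fact with the sharper error $O_\Sigma(X)$. I expect the principal difficulty to be exactly this error bookkeeping in the gluing step: one must confirm that the regimes of applicability of Theorems \ref{thm:dh} and \ref{thm:cmcount} overlap, i.e.\ that both error estimates are non-trivial for a common range of $\theta$, which is precisely where the strength of the averaged error term of \cite{BTT} and of the convexity input behind Theorem \ref{thm:cmcount} is consumed. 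A secondary subtlety is that the two leading constants are generated by visibly different mechanisms --- a residue of $L_1(\Sigma,s)$ on the $A$-side and an average of the residues $r_d$ on the $B$-side --- and must be seen to coincide; the $\theta$-independence of the left-hand side furnishes this automatically.
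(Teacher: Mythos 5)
Your proof is correct, and it takes a genuinely different route from the paper's. The paper first packages Propositions \ref{proproughcount} and \ref{propconstant} and Corollary \ref{corconstant} into Theorem \ref{thdyadicmain} (an asymptotic for $N(\Sigma;Y,Z)$ with error $o_\Sigma(Y)Z + Y o_\Sigma(Z)$), and then proves Theorem \ref{thCond} by tiling the hyperbolic region $\{|\iD|\iF < X\}$ with $O(\epsilon'^{-1}\log X)$ thin rectangles $[Y_k,Y_{k+1}]\times[0,Z_k]$ (and their transposes), each of mass $\approx \epsilon' X$, applying Theorem \ref{thdyadicmain} to each, and letting $\epsilon'\to 0$ slowly. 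You instead cut at a single threshold $T=X^\theta$ with $\theta\in(1/3,1/2)$, fiber linearly over $\iF$ on the small side via Theorem \ref{thm:dh} and over $\iD$ on the large side via Theorem \ref{thm:cmcount}, and finish by partial summation; the requirement $\theta\in(1/3,1/2)$ is exactly the requirement that both error regimes from the two input theorems be simultaneously nontrivial, and is the content your gluing step extracts from the same machinery. Two points in your favor: your $\theta$-independence trick to force $c_\ast = C_1(\iD\iF;\Sigma)$ is a clean alternative to matching the constants head-on (the paper does the matching via Corollary \ref{corconstant}; incidentally, the archimedean factor $\sum_{K\in\Sigma_\infty}1/|\Aut(K)|$ appears to be missing from the displayed statement of that corollary, which your trick neatly sidesteps), and your bookkeeping actually yields the sharper remainder $O_\Sigma(X)$ rather than the $o(X\log X)$ obtained from the $\epsilon'$-tiling. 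The minor preliminary reduction (discarding the $O(X)$ cyclic cubics, which have $\iD=1$) is sound and a sensible hygiene step.
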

\begin{proof}
Given $\epsilon > 0$, choose $\epsilon' < \epsilon$ so that 
the interval $[1, \sqrt{X})$ may be divided exactly into 
$\frac12 (\epsilon'^{-1} + O(1)) \log X$
intervals of the form $[(1 + \epsilon')^k, (1 + \epsilon')^{k + 1})$,
and write $Y_k := (1 + \epsilon')^k$ and $Z_k := X/(1 + \epsilon')^k$.
By Theorem \ref{thdyadicmain}, we have that
\[
N(\Sigma; Y_{k + 1}, Z_k) - N(\Sigma; Y_k, Z_k) = C_1(\textnormal{DF}, \Sigma) \cdot
\epsilon' X + o(Y_k) Z_k,
\]
where $C_1(\textnormal{DF}, \Sigma)$ is the constant in \eqref{eqMT},
and the same is true with the roles of $Y$ and $Z$ reversed. Since every field
counted by $N_{|\iD|\iF}(\FF(\Sigma);X)$ is counted in one of the above rectangles,
we obtain
\[
N_{|\iD|\iF}(\FF(\Sigma);X) \leq C_1(\textnormal{DF}; \Sigma) X \log X \cdot
(1 + O(\epsilon) + \epsilon'^{-1} o_X(1)).
\]
Choosing $\epsilon \rightarrow 0$ as $X \rightarrow \infty$, we obtain the result
as an upper bound. To obtain the lower bound, proceed analogously, choosing 
$Z_k := X/(1 + \epsilon')^{k + 1}$ and subtracting the $O(X)$ fields in 
$N(\Sigma; \sqrt{X}, \sqrt{X})$ which are counted twice. 
\end{proof}

Theorems \ref{thGD} and \ref{thm:secterm} follow immediately from Propositions \ref{propMT1}
and \ref{propMT2}, and Theorem \ref{thCond}. 
We conclude
by proving Theorems \ref{thradical} and \ref{thIndPr}.

\medskip
\noindent{\bf Proof of Theorem \ref{thradical}:} For a prime $p>3$
and \'etale cubic extension $K_p$ of $\Q_p$, as noted previously we have
that $p^2\nmid \iD(K)\iF(K)$. Therefore, for a cubic field $K$, we
have ${\rm rad}(\Disc(K))=\iD(K)\iF(K)$ up to sign and bounded powers of $2$
and $3$. Let $\delta_2$ and $\delta_3$ be powers of $2$ and $3$, respectively. For $p = 2, 3$ let $S(\delta_p)$ be the set of cubic \'etale extensions $K_p$ of $\Q_p$ for which
$\iD(K_p) \iF(K_p)$ 
has $p$-adic part $\delta_p$, and let $\Sigma(\delta_2,\delta_3)$ be the finite collection of
cubic splitting types defined by $\Sigma_2=S(\delta_2)$,
$\Sigma_3=S(\delta_3)$, and $\Sigma_v=\Sigma_v^{\rm all}$ for all other
places $v$. Then we have
\begin{equation*}
\begin{array}{rcl}
\displaystyle\#\bigl\{K\in\FF(\Sigma):C(K)<X,\,\iD(K) \neq -3,\,\pm\Disc(K)>0\bigr\}&=&
\displaystyle N_{|\iD|\iF}\Bigl(\Sigma;
\frac{\delta_2\delta_3}{{\rm rad}(\delta_2\delta_3)}
\cdot X\Bigr)
\\[.2in]&=&\displaystyle
\frac{1}{2\sigma_\pm}
\CC(\delta_2)\CC(\delta_3)
\prod_{p\geq 5}\Bigl(1+\frac{2}{p}\Bigr)
  \Bigl(1-\frac{1}{p}\Bigr)^2
\cdot X\log X\\[.2in]&&+o(X\log X),
\end{array}
\end{equation*}
where $\sigma_+=6$ and $\sigma_-=2$ are the sizes of the automorphism
groups of $\R^3$ and $\R\times\C$, respectively, and
\begin{equation*}
  \CC(\delta_2)=\Bigl(1-\frac{1}{2}\Bigr)^2\sum_{K_2\in S(\delta_2)}\frac{\textnormal{rad}(\delta_2)^{-1}}{|\Aut(K_2)|};
  \quad\quad
  \CC(\delta_3)=\Bigl(1-\frac{1}{3}\Bigr)^2\sum_{K_3\in S(\delta_3)}\frac{\textnormal{rad}(\delta_3)^{-1}}{|\Aut(K_3)|}.
\end{equation*}
Summing over all $\delta_2$ and $\delta_3$, we
obtain
\begin{equation*}
\#\bigl\{K\in\FF:C(K)<X,\,\iD(K)\neq-3,\,\pm\Disc(K)>0\bigr\}=
\frac{1}{2\sigma_\pm}
\prod_{p}
\CC(p)
  \Bigl(1-\frac{1}{p}\Bigr)^2
\cdot X\log X+o(X\log X),
\end{equation*}
where for any prime $p$, the quantity $\CC(p)$ is defined to be
\begin{equation*}
\CC(p):=
\sum_{K\in\Sigma_p^\all}\frac{|{\rm rad}(|\iD(K)| \iF(K))|_p}{|\Aut(K)|}.
\end{equation*}
To compute these constants, we use the database of local fields available at \url{lmfdb.org} \cite{LMFDB}, which lists each 
quadratic or cubic ramified extension of $\Q_2$ and $\Q_3$ with its Galois group; we obtain that 
$\CC(2)=3$ and
$\CC(3)=11/3$.

Finally, to count the contribution of the pure cubic fields, observe that we have
\[
\sum_{\iD(K) = -3} C(K)^{-s} = -\frac{1}{2} \cdot 3^{-s} + \frac{3}{2} \cdot 3^{-s} \prod_{p \neq 3} \big(1 + \frac{2}{p^s}\big).
\]
by \cite[Proposition 7.3]{CM}. By an argument identical to that of 
 Theorem \ref{thm:cmcount} or
\cite[Proposition 7.4]{CM}, we have
\begin{equation*}
\#\bigl\{K\in\FF:C(K)<X,\,\iD(K)=-3,\,\pm\Disc(K)>0\bigr\}=
\frac{3}{10}
\prod_{p}
  \Bigl(1+\frac{2}{p}\Bigr)
  \Bigl(1-\frac{1}{p}\Bigr)^2
\cdot X\log X+o(X\log X),
\end{equation*}
thereby completing the proof. $\Box$

\medskip
\medskip

\noindent{\bf Proof of Theorem \ref{thIndPr}:} 
For $\alpha \leq \beta$ this follows immediately from the shape
of the leading asymptotics in parts (a) and (c) of Theorem \ref{thGD}.
It remains to
prove the result when $\alpha>\beta$, and as before we may assume
that $\beta=1$. For fixed $\epsilon>0$ let $N(\epsilon)$ be
the smallest positive integer such that the following inequality is
satisfied:
\begin{equation}\label{eqindprimes}
\epsilon\cdot\frac{\Res_{s=1}\Phi_{\Sigma^\all,5}}{5^\alpha}>
\sum_{\substack{|d|>N(\epsilon)\\{\rm fund. disc.}}}
\frac{\Res_{d=1}\Phi_{\Sigma^\all,d}}{|d|^\alpha}.
\end{equation}
Such an $N(\epsilon)$ exists for each $\epsilon$ since the sum of
$\Res_{d=1}(\Phi_{\Sigma^\all,d})/|d|^\alpha$ is convergent, as can be seen from
Corollary \ref{corconstant} for example.

For
each fundamental discriminant $d$ with $3, 5 \neq |d|\leq N(\epsilon)$, now let
$p_d\neq 7$ be a prime such that the splitting types of $p_d$ at
$\Q(\sqrt{5})$ and $\Q(\sqrt{d})$ differ. (For $d = 1$, we choose $p_1$ to be inert in $\Q(\sqrt{5})$.)
Define
$\Sigma_{p_d}^{(\epsilon)}$ to be the set of all \'etale cubic
extensions of $\Q_{p_d}$ whose quadratic resolvents
are equal to $\Q_{p_d}(\sqrt{5})\oplus\Q_{p_d}$.
We then define the collection $\Sigma^{(\epsilon)}$ by taking 
$\Sigma_{p_d}^{(\epsilon)}$ as above, and choosing 
$\Sigma_{p}^{(\epsilon)}=\Sigma_p^\all$ for $p$ not equal to any of the
$p_d$.

Then \eqref{eqindprimes} holds with $\Sigma^{\all}$ replaced by $\Sigma^{\epsilon}$ and
$|d| > N(\epsilon)$ replaced by $d \neq -3, 5$, as
the newly imposed splitting conditions do not exclude any of the fields counted on the left,
nor do they include any of the fields added to the right.
Since $K\otimes\Q_7\not\cong\Q_7^3$ for any $K$ with resolvent $\Q(\sqrt{5})$, this implies that
\[
0 < \lim_{X \rightarrow \infty} 
\frac{\#\{K\in\FF(\Sigma^{(\epsilon)}):\iD(K) \neq -3,\, \iD(K)^\alpha\iF(K)<X,\, K\otimes\Q_7\cong\Q_7^3\} }
{\#\{K\in\FF(\Sigma^{(\epsilon)}):\iD(K) \neq -3,\, \iD(K)^\alpha\iF(K)<X \} }
< \frac{\epsilon}{1 + \epsilon} < \epsilon.
\]

In particular, since $7$ does not split in $\Q(\sqrt{5})$, the probability of the prime $7$ splitting
completely in $\FF(\Sigma^{(\epsilon)})$ goes to $0$ as $\epsilon$ tends to $0$.
Moreover, the probability that $7$ splits completely in $\FF(\Sigma^\all)$ is positive, since $7$ splits completely a positive proportion of the time in cubic fields with resolvent, say, $\Q(\sqrt{-19})$.
The result now follows from the fact that $\Sigma^{(\epsilon)}_7$ is constant for all
$\epsilon$.
$\Box$

\section{Numerical data}

As a double check on our work we numerically verified Theorem \ref{thm:cm_explicit_lc} (the explicit
Dirichlet series counting cubic fields with local conditions, by quadratic resolvent), and the
$\alpha = \beta = 1$ case of Theorem \ref{thGD} (counting cubic fields with $|\iD|\iF < X$).
Our code can be readily modified to cover additional cases of Theorem \ref{thGD}.

We used the PARI/GP programming language \cite{pari},
and our source code and data may be downloaded from {\upshape \url{thornef.github.io}}: a program \href{https://thornef.github.io/cm-test.gp}{\texttt{cm-test.gp}}
to compute instances of Theorem \ref{thm:cm_explicit_lc}, and compare against known data
when possible; a program \href{https://thornef.github.io/cubic-count.gp}{\texttt{cubic-count.gp}} to
generate the data below; and lists of cubic fields (\href{https://thornef.github.io/rcf-1500k.gp}{\texttt{rcf-1500k.gp}} and
\href{https://thornef.github.io/icf-1000k.gp}{\texttt{icf-1000k.gp}})
obtained from {\upshape \url{lmfdb.org}} \cite{LMFDB}.

For counting cubic fields with $|\iD|\iF < X$, the following table presents a comparison
(for relatively small $X$) of the asymptotics proved in Theorem \ref{thGD}(c) with the data:

\begin{center}
\begin{tabular}{c | c | c}
$X$ & Theorem \ref{thGD}(c) & Actual Data \\ \hline
100 & 50 & 38\\
1000 & 748 & 629 \\
10000 & 9977 & 9181 \\
20000 & 21456 & 20044 \\
30000 & 33502 & 31427 \\ 
\end{tabular}
\end{center}

We note the apparent presence of one or more negative lower order terms. There are at least
three possible explanations for the discrepancy between the data and the asymptotics:
\begin{itemize}
\item
the negative secondary term in the Davenport-Heilbronn theorem \eqref{eq:dh}; 
\item the exclusion of $\iD = -3$ from our counts, which is not `visible' in the main term of 
Theorem \ref{thGD}(c);
\item the natural tendency for asymptotics with logarithmic terms to have lower order terms
without the logarithms, e.g. the divisor sum estimate $\sum_{n < X} d(n) = X \log X + (2\gamma - 1)X + O(\sqrt{X})$. 
\end{itemize}

We leave a more detailed analysis for followup work. 
\subsection*{Acknowledgments}

It is a pleasure to thank Manjul Bhargava, Robert Lemke Oliver, Tim Santens, and Takashi Taniguchi for helpful conversations. We are extremely grateful to the anonymous referee for a very thorough reading and many useful suggestions. 

AS is supported by an
NSERC Discovery Grant and a Sloan Research Fellowship. FT 
was partially supported by the National Science Foundation under Grant
No. DMS-2101874, by the National Security Agency under Grant
H98230-16-1-0051, and by grants from the Simons Foundation (Nos.~563234 and~586594).

Finally, this project was begun at the Palmetto Number Theory Series in Columbia, SC in
December 2016, hosted by Matthew Boylan, Michael Filaseta, and FT. We would like to thank
Boylan and Filaseta for hosting, and the NSF (Grant No. DMS-1601239) and NSA 
(Grant No. H98230-14-1-0151) for funding the conference. 

\bibliographystyle{abbrv}
\bibliography{references}

\end{document}